\title[Noncrossing partition quantum groups]{On two-coloured noncrossing partition quantum groups}
\author{Amaury Freslon}
\keywords{Compact quantum groups, representation theory, noncrossing partitions}
\subjclass[2010]{20G42, 05E10}
\address{Laboratoire de Math\'ematiques d'Orsay, Univ. Paris-Sud, CNRS, Universit\'e Paris-Saclay, 91405 Orsay, France}
\email{amaury.freslon@math.u-psud.fr}
\date{}
\theoremstyle{plain}
\newtheorem{thm}{Theorem}[section]
\newtheorem{prop}[thm]{Proposition}
\newtheorem{lem}[thm]{Lemma}
\theoremstyle{definition}
\newtheorem{de}[thm]{Definition}
\theoremstyle{remark}
\newtheorem{rem}[thm]{Remark}
\DeclareMathOperator{\Mor}{Mor}
\DeclareMathOperator{\Id}{Id}
\DeclareMathOperator{\id}{id}
\DeclareMathOperator{\Proj}{Proj_{\CC}}
\DeclareMathOperator{\rl}{rl}
\DeclareMathOperator{\Span}{Span}
\newcommand{\A}{\mathcal{A}}
\newcommand{\C}{\mathbb{C}}
\newcommand{\CC}{\mathcal{C}}
\newcommand{\D}{\Delta}
\newcommand{\DD}{\mathcal{D}}
\newcommand{\G}{\mathbb{G}}
\newcommand{\Gr}{\mathcal{G}}
\newcommand{\HH}{\mathbb{H}}
\newcommand{\N}{\mathbb{N}}
\newcommand{\Z}{\mathbb{Z}}
\newcommand{\co}{\overline}
\newcommand{\ii}{\imath}
\newcommand{\idpart}{|}
\begin{document}

\begin{abstract}
We classify compact quantum groups associated to noncrossing partitions coloured with two elements $x$ and $y$ which are their own inverses. Together with the work of P. Tarrago and M. Weber, this completes the classification of all noncrossing partition quantum groups on two colours. We also give some general results on the class of all noncrossing partition quantum groups and suggest some wider classification statements.
\end{abstract}

\maketitle

\section{Introduction}

T. Banica and R. Speicher developped in \cite{banica2009liberation} a combinatorial machinery to build families of compact quantum groups. The idea is to start with a suitable collection of partitions of finite sets, to build a tensor category out of it and then to use a Tannaka-Krein type argument to obtain a compact quantum group (called an \emph{easy quantum group}). This idea proved very fruitful and was followed by many works both exploring the construction and generalizing it. One of the interesting features of the construction is that it potentially contains new examples of compact quantum groups. Another one is that the objects thus obtained are naturally linked to (free) probability, in particular through De Finetti theorems (see for instance \cite{banica2012finetti}).

Our main interest in the present paper lies in quantum groups built from partitions which are \emph{noncrossing} (see Definition \ref{de:noncrossing}). It turns out that in this setting, the theory of T. Banica and R. Speicher gives a unified construction of several important examples of compact quantum groups studied up to now. While it was known from the beginning that the original construction would not give anything more for noncrossing partitions, it soon became clear that it could be extended in several ways (see for instance \cite{tarrago2015unitary}, \cite{cebron2016quantum} and \cite{speicher2016quantum}). In \cite{freslon2014partition}, we introduced a general setting called \emph{partition quantum groups} using coloured partitions to generalize easy quantum groups. We were in particular motivated by the following question of T. Banica and R. Vergnioux in \cite{banica2009fusion} : can all free quantum groups (in the sense of \cite[Def 10.2]{banica2009fusion}) be described by partitions ? The affirmative answer relies on the crucial fact that the lack of one-dimensional representations (which is closely linked to freeness) translates into a stability property for the collection of partitions, so that we were then able to classify them. In that way, we obtained as a by-product a model for all possible compact quantum groups with free fusion semiring. The present work complements \cite{freslon2014partition} since we will now deal with the non-free case.

Such results naturally rise the question of classifying all partition quantum groups associated to noncrossing partitions. Let us recall what is known on this problem. The case of orthogonal easy quantum groups was completed by S. Raum and M. Weber in \cite{weber2012classification} and \cite{raum2013full} (and the authors in fact classified \emph{all} easy quantum groups, not only the noncrossing ones). In the unitary case, things are more involved but P. Tarrago and M. Weber were able to classify all noncrossing unitary easy quantum groups in \cite{tarrago2015unitary} and \cite{tarrago2018classification}. In Sections \ref{sec:orthogonal} to \ref{sec:symmetric} we will classify all noncrossing partition quantum groups on two colours which are their own inverses. Together with the results above, this completes the classification of noncrossing partition quantum groups on one or two colours. Beyond these results, the aim of this paper is to highlight two points concerning the general classification problem.

The first point is the method. In all previous works, the classification is done by combinatorial arguments on the set of partitions so that the associated quantum groups never enter the picture. Here we take another point of view since we are more interested in classifying the quantum groups rather than the underlying categories of partitions. We therefore use a different strategy relying on the results of \cite{freslon2013representation} linking the partitions with the representation theory of the associated quantum group (see Subsection \ref{subsec:projective} for details). Considering a category of partitions $\CC$, we first build a subcategory of partitions $\CC'\subset \CC$ satisfying two properties :
\begin{itemize}
\item it is simpler in the sense that we can easily describe the associated compact quantum group,
\item it is large in the sense that any projective partition (see Definition \ref{de:projective}) of $\CC$ lies in $\CC'$.
\end{itemize}
Then, any partition $p\in \CC\setminus\CC'$ is equivalent, when rotated on one line, to the triviality of a one-dimensional representation of the quantum group associated to $\CC'$. Thus, the quantum group associated to $\CC$ is simply the quotient of the one associated to $\CC'$ by some relations which can be expressed in the group of one-dimensional representations (we will call these \emph{group-like relations} in Definition \ref{de:grouplike}).

The second point is the way of stating the classification. Quotienting by a group-like relation means that the C*-algebra $C(\G_{N}(\CC))$ is quotiented by the Hopf $*$-ideal generated by some group-like element. Such an ideal (and the corresponding quotient) may be very large and difficult to describe explicitly. This means that our classification will not be a list of all possible cases but rather a list of relations by which one may quotient. This can seem disappointing compared to the other results available so far, but it is possible to push our work further to produce the desired list. However, such a list would be long and perhaps not enlightening. More importantly, this approach would become intractable when the number of colours increases. On the contrary, our description is amenable to generalizations and we suggest in the end of this article how it could be extended to give a description of the class of all noncrossing partition quantum groups.

Let us end this introduction with an outline of the paper. In Section \ref{sec:preliminaries} we introduce some notations and recall the main results needed in the paper. Then, we describe in Section \ref{sec:structure} some basic operations preserving the class of noncrossing partition quantum groups. We also introduce a new family of compact quantum groups called free wreath products of pairs, which will be important in Section \ref{sec:hyperoctahedral}. The classification of noncrossing partition quantum groups on two colours is split into four parts : in Sections \ref{sec:orthogonal} and \ref{sec:bistochastic} we deal with categories of partitions where all blocks have size less than two and in Sections \ref{sec:hyperoctahedral} and \ref{sec:symmetric} we treat the other cases. We end with Section \ref{sec:summary} where we summarize our results and restate them in a way which makes sense for more colours.

\subsection*{Acknowledgment}

We thank the referee for comprehensively reading this work and making comments and suggestions which greatly improved the quality of this article. We are also grateful to Moritz Weber for his comments on a earlier version.

\section{Preliminaries}\label{sec:preliminaries}

In this section we introduce some basic material, mainly to fix notations. The reader may refer to \cite{freslon2013representation} for more background on the link between partitions and quantum groups and to the book \cite{neshveyev2014compact} for details and proofs concerning the theory of compact quantum groups.

\subsection{Coloured partitions}

The main topic of this work is partitions of finite sets. Even though these may seem to be very simple objects, they exhibit rich combinatorial properties. These properties are best seen using a graphical representation of the partitions. Let us denote by $P(k, l)$ the set of partitions of the set $\{1, 2, \dots, k+l\}$. We represent such partitions in the following way : we draw a row of $k$ points above a row of $l$ points and then connect the points which belong to the same subset of the partition. Here is for instance the representation of $p = \{\{1, 5, 6, 7\}, \{2, 4\}, \{3\}\}\in P(5, 2)$ :
\begin{center}
\begin{tikzpicture}[scale=0.5]
\draw (0,-1) -- (0,1);
\draw (-2,1) -- (2,1);
\draw (-2,1) -- (-2,3);
\draw (2,1) -- (2,3);
\draw (-1,2) -- (-1,3);
\draw (1,2) -- (1,3);
\draw (-1,2) -- (1,2);
\draw (0,3) -- (0,2.5);

\draw (-1,-1) -- (1,-1);
\draw (-1,-1) -- (-1,-3);
\draw (1,-1) -- (1,-3);

\draw (-2.5,0) node[left]{$p = $};
\end{tikzpicture}
\end{center}

This pictorial description makes it easy to work with \emph{blocks}, which we now define.

\begin{de}
Let $p$ be a partition.
\begin{itemize}
\item A maximal set of points which are all connected (i.e.~one of the subsets defining the partition) is called a \emph{block} of $p$.
\item If moreover this block consists only of neighbouring points, then it is called an \emph{interval}.
\item If $b$ contains both upper and lower points (i.e.~the subset contains an element of $\{1, \dots, k\}$ and an element of $\{k+1, \dots, k+l\}$), then it is called a \emph{through-block}.
\item Otherwise, it is called a \emph{non-through-block}.
\end{itemize}
The total number of through-blocks of the partition $p$ is denoted by $t(p)$.
\end{de}

This work focuses on a special type of partitions which are said to be \emph{non-crossing}.

\begin{de}\label{de:noncrossing}
Let $p$ be a partition. A \emph{crossing} in $p$ is a tuple $k_{1} < k_{2} < k_{3} < k_{4}$ of integers such that :
\begin{itemize}
\item $k_{1}$ and $k_{3}$ are in the same block,
\item $k_{2}$ and $k_{4}$ are in the same block,
\item the four points are \emph{not} in the same block.
\end{itemize}
If there is no crossing in $p$, then it is said to be  a \emph{non-crossing} partition. The set of non-crossing partitions will be denoted by $NC$.
\end{de}

The example given above is a non-crossing partition. We now add some further structure on the partitions by introducing colours.

\begin{de}
A \emph{colour set} is a set $\A$ together with an involution denoted by $x\mapsto x^{-1}$. An $\A$-coloured partition is a partition together with an element of $\A$ attached to each point. A coloured partition is said to be non-crossing if the underlying uncoloured partition is non-crossing. The set of $\A$-coloured non-crossing partitions will be denoted by $NC^{\A}$.
\end{de}

Let $p$ be an $\A$-coloured partition. Reading from left to right, we can associate to the upper row of $p$ a word $w$ on $\A$ and to its lower row (again reading from left to right) a word $w'$ on $\A$. For a set of partitions $\CC$, we will denote by $\CC(w, w')$ the subset of all partitions in $\CC$ such that the upper row is coloured by $w$ and the lower row is coloured by $w'$ and we will denote by $\vert w\vert$ the length of a word $w$. There are several fundamental operations available on partitions called the \emph{category operations} :
\begin{itemize}
\item If $p\in \CC(w, w')$ and $q\in \CC(z, z')$, then $p\otimes q\in \CC (w.z, w'.z')$ is their \emph{horizontal concatenation}, i.e.~the first $\vert w\vert$ of the $\vert w\vert + \vert z\vert$ upper points are connected by $p$ to the first $\vert w'\vert$ of the $\vert w'\vert + \vert z'\vert$ lower points, whereas $q$ connects the remaining $\vert z\vert$ upper points with the remaining $\vert z'\vert$ lower points.
\item If $p\in \CC(w, w')$ and $q\in \CC(w', w'')$, then $qp\in \CC(w, w'')$ is their \emph{vertical concatenation}, i.e.~$\vert w\vert$ upper points are connected by $p$ to $\vert w'\vert$ middle points and the lines are then continued by $q$ to $\vert w''\vert$ lower points. This process may produce loops in the partition. More precisely, consider the set $L$ of elements in $\{1, \dots, \vert w'\vert\}$ which are not connected to an upper point of $p$ nor to a lower point of $q$. The lower row of $p$ and the upper row of $q$ both induce a partition of the set $L$. The maximum (with respect to inclusion) of these two partitions is the \emph{loop partition} of $L$, its blocks are called \emph{loops} and their number is denoted by $\rl(q, p)$. To complete the operation, we remove all the loops. Note that we can only perform this vertical concatenation if the words associated to the lower row of $p$ and the upper row of $q$ match.
\item If $p\in \CC(w, w')$, then $p^{*}\in \CC(w', w)$ is the partition obtained by reflecting $p$ with respect to an horizontal axis between the two rows (without changing the colours).
\item If $w = w_{1}\dots w_{n}$, $w' = w'_{1}\dots w'_{k}$ and $p\in \CC(w, w')$, then rotating the extreme left point of the lower row of $p$ to the extreme left of the upper row and changing its colour to its inverse yields a partition $q\in \CC((w'_{1})^{-1}w_{1}\dots w_{n}, w'_{2}\dots w'_{k})$. The partition $q$ is called a \emph{rotated version} of $p$. One can also perform rotations on the right and from the upper to the lower row.
\end{itemize}

Let us say that for an element $x\in \A$, the \emph{$x$-identity partition} is the partition $\idpart\in \CC(x, x)$ coloured with $x$ on both ends. We are now ready for the definition of a category of coloured partitions, the fundamental object of this work.

\begin{de}
A \emph{category of $\A$-coloured partitions} $\CC$ is the data of a set of $\A$-coloured partitions $\CC(w, w')$ for all words $w$ and $w'$ on $\A$, which is stable under all the category operations and contains the $x$-identity partition for all $x\in \A$.
\end{de}

Using the category operations one can define another operation on partitions which will play an important role in the sequel. If $w = w_{1}\dots w_{n}$ is a word on $\A$, we set $\overline{w} = \overline{w}_{n}\dots \overline{w}_{1}$.

\begin{de}
The \emph{conjugate} of a partition $p\in NC^{\A}(w, w')$ is the partition $\overline{p}\in NC^{\A}(\overline{w}', \overline{w})$ obtained by rotating $p$ upside down. Note that categories of partitions are by definition stable under taking conjugates.
\end{de}

In the next subsection, we will explain the link between categories of partitions and compact quantum groups. This link relies on the following way of associating linear maps to partitions :

\begin{de}
Let $N$ be an integer and let $(e_{1}, \dots, e_{N})$ be the canonical basis of $\C^{N}$. For any partition $p$, we define a linear map
\begin{equation*}
T_{p}:(\C^{N})^{\otimes k} \mapsto (\C^{N})^{\otimes l}
\end{equation*}
by the following formula :
\begin{equation*}
T_{p}(e_{i_{1}} \otimes \dots \otimes e_{i_{k}}) = \sum_{j_{1}, \dots, j_{l} = 1}^{n} \delta_{p}(i, j)e_{j_{1}} \otimes \dots \otimes e_{j_{l}},
\end{equation*}
where $\delta_{p}(i, j) = 1$ if and only if all the strings of the partition $p$ connect equal indices of the multi-index $i = (i_{1}, \dots, i_{k})$ in the upper row with equal indices of the multi-index $j = (j_{1}, \dots, j_{l})$ in the lower row. Otherwise, $\delta_{p}(i, j) = 0$.
\end{de}

Note that the colours do not play any role in this definition. The reason why non-crossing partition quantum groups are easier to handle than general partition quantum groups is the following well-known fact (see e.g. \cite[Lem 4.16]{freslon2013representation} for a proof). This result is also responsible for the restriction $N\geqslant 4$ in most of the statements of the present work.

\begin{prop}
Let $N\geqslant 4$ be an integer and let $w$ and $w'$ be words on $\A$. Then, the linear maps $(T_{p})_{p\in NC^{\A}(w, w')}$ are linearly independant.
\end{prop}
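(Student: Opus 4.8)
The plan is to forget the colours, recast the statement as the invertibility of a Gram matrix, and then analyse that matrix through the combinatorics of the noncrossing partition lattice. First I would observe that the colours are irrelevant: once the words $w$ and $w'$ are fixed, an element of $NC^{\A}(w,w')$ is simply a noncrossing partition of $\{1,\dots,k+l\}$ (with $k=\vert w\vert$ and $l=\vert w'\vert$) carrying the prescribed colouring, and $p\mapsto T_{p}$ depends only on the underlying uncoloured partition. Hence it suffices to prove that the family $(T_{p})_{p}$, indexed by the noncrossing partitions of $\{1,\dots,k+l\}$, is linearly independent, which I would do by showing that the Gram matrix $G(N)=\bigl(\langle T_{p},T_{q}\rangle\bigr)_{p,q}$ for the Hilbert--Schmidt pairing $\langle S,T\rangle=\Tr(T^{*}S)$ is invertible.

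The entries of $G(N)$ are computed directly from the definition: since the matrix coefficients of $T_{p}$ in the canonical basis are the numbers $\delta_{p}(i,j)\in\{0,1\}$, one finds
\[
\langle T_{p},T_{q}\rangle=\sum_{i,j}\delta_{p}(i,j)\,\delta_{q}(i,j)=N^{b(p\vee q)},
\]
where $i$ and $j$ run over the multi-indices in $\{1,\dots,N\}^{k}$ and $\{1,\dots,N\}^{l}$, where $p\vee q$ denotes the join of $p$ and $q$ in the partition lattice and $b(\sigma)$ the number of blocks of $\sigma$; the identity holds because $\delta_{p}(i,j)=1=\delta_{q}(i,j)$ exactly when $(i,j)$ is constant on the blocks of $p\vee q$. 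As $p$ and $q$ are noncrossing, so is $p\vee q$, so the whole analysis stays inside the noncrossing lattice. A cheap warm-up bound is already available: summing over the multi-indices taking distinct values on distinct blocks of a fixed $p$ produces functionals detecting $p$ up to refinement, which gives linear independence as soon as $N$ exceeds the number of points. The real work is to bring this threshold down to $4$.

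To do so I would use M\"obius inversion on the lattice $NC$ of noncrossing partitions of $\{1,\dots,k+l\}$. Putting $f(\sigma)=N^{b(\sigma)}$ and $g(\rho)=\sum_{\tau\geq\rho}\mu(\rho,\tau)f(\tau)$ (all sums and the M\"obius function being taken inside $NC$), one has $f(\sigma)=\sum_{\rho\geq\sigma}g(\rho)$ and therefore
\[
G_{p,q}=f(p\vee q)=\sum_{\rho\geq p,\ \rho\geq q}g(\rho)=\sum_{\rho}Z_{p,\rho}\,g(\rho)\,Z_{q,\rho},\qquad Z_{p,\rho}=\mathbf{1}_{\{p\leq\rho\}}.
\]
In other words $G(N)=Z\,\mathrm{diag}(g)\,Z^{t}$, and ordering the partitions by any linear extension of the refinement order makes $Z$ unitriangular, hence invertible; consequently $\det G(N)=\prod_{\rho}g(\rho)$ and the problem reduces to showing $g(\rho)\neq0$ for every $\rho$. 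Each interval $[\rho,\hat{1}]$ of $NC$ is isomorphic to a product $\prod_{i}NC(m_{i})$ of full noncrossing lattices, and under this isomorphism $f$ is multiplicative, so $g(\rho)=\prod_{i}g_{m_{i}}(N)$ where $g_{m}(N)=\sum_{\tau\in NC(m)}\mu(0_{m},\tau)N^{b(\tau)}=N\,\chi_{NC(m)}(N)$ is, up to the factor $N$, the characteristic polynomial of the noncrossing partition lattice on $m$ points.

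The heart of the matter, and the step I expect to be the main obstacle, is then to prove $g_{m}(N)\neq0$ for every $m\geq1$ whenever $N\geq4$. The first values $g_{1}=N$, $g_{2}=N(N-1)$, $g_{3}=N(N-1)(N-2)$ and $g_{4}=N(N-1)(N^{2}-5N+5)$ already show the phenomenon: the largest root of $g_{4}$ is $(5+\sqrt5)/2\approx3.62<4$, which is exactly where the bound $N\geq4$ comes from and where the cruder argument above breaks down. To control all $m$ simultaneously I would pass to free probability: via the Kreweras complement one rewrites $g_{m}(N)=N^{m+1}\kappa_{m}$, where $\kappa_{m}$ is the $m$-th free cumulant of the two-point law $(1-N^{-1})\delta_{0}+N^{-1}\delta_{1}$ (equivalently, one sets up a three-term Chebyshev-type recurrence for the $g_{m}$). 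The required input is the root-localisation statement that all real roots of $g_{m}$ lie in $[0,4)$, so that $g_{m}(N)>0$ for all $N\geq4$, the value $4$ being the spectral edge of the relevant limiting distribution. Granting this uniform root bound, all factors $g(\rho)$ are nonzero, $\det G(N)\neq0$, and the maps $(T_{p})$ are linearly independent.
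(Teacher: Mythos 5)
There is a genuine gap, and it sits at the very centre of your argument. The Gram pairing is indeed $\langle T_{p},T_{q}\rangle=\sum_{i,j}\delta_{p}(i,j)\delta_{q}(i,j)=N^{b(p\vee q)}$, but the join here is necessarily the join in the \emph{full} partition lattice $P(k+l)$: a multi-index is constant on the blocks of both $p$ and $q$ exactly when it is constant on the blocks of their common coarsening among \emph{all} partitions. Your claim that ``as $p$ and $q$ are noncrossing, so is $p\vee q$'' is false. Take $p=\{\{1,3\},\{2\},\{4\}\}$ and $q=\{\{2,4\},\{1\},\{3\}\}$: both are noncrossing, yet $p\vee q=\{\{1,3\},\{2,4\}\}$ is crossing, so the Gram entry is $N^{2}$ while the join of $p$ and $q$ inside the lattice $NC(4)$ is the one-block partition, giving $N^{1}$. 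Consequently the Lindstr\"om-type factorization $G(N)=Z\,\mathrm{diag}(g)\,Z^{t}$ with $Z$ the (unitriangular) zeta matrix of $NC$ does not reproduce $G(N)$: the product $Z\,\mathrm{diag}(g)\,Z^{t}$ has $(p,q)$-entry $\sum_{\rho\geq p,\,\rho\geq q}g(\rho)=N^{b(p\vee_{NC}q)}$, which is a genuinely different matrix already for four points. M\"obius inversion over the full lattice $P(k+l)$ does not rescue the argument either, since then $Z$ becomes rectangular (rows indexed by $NC$, columns by all of $P$) and unitriangularity no longer yields invertibility --- this is precisely why that route only gives the ``cheap'' bound $N\geqslant k+l$ you mention. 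A second, independent gap is that the key analytic input --- all real roots of the polynomials $g_{m}$ lie in $[0,4)$ --- is explicitly assumed (``granting this uniform root bound'') rather than proved, and it is exactly where the threshold $N\geqslant 4$ must be extracted.

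For comparison: the paper does not prove this proposition at all, but quotes it from the reference given in the text (Lemma 4.16 of the cited joint work with Weber). The argument there, and the standard one in the literature, replaces your lattice factorization by the ``fattening'' bijection between $NC(n)$ and noncrossing \emph{pair} partitions of $2n$ points, which identifies the Gram matrix of the $T_{p}$ at parameter $N$ with the Gram matrix of Temperley--Lieb diagrams at loop parameter $\sqrt{N}$; the latter determinant is known (Di Francesco's meander determinant) and its roots are of the form $4\cos^{2}(j\pi/k)<4$, whence nondegeneracy for $\sqrt{N}\geqslant 2$, i.e.\ $N\geqslant 4$. Your computed values $g_{2},g_{3},g_{4}$ and the observation that the largest root of $g_{4}$ is $(5+\sqrt{5})/2=4\cos^{2}(\pi/10)$ are consistent with that picture, so your intuition about where the bound $4$ comes from is correct; but to turn the proposal into a proof you would need both to repair the factorization (e.g.\ via the Temperley--Lieb correspondence, which handles the crossing joins correctly) and to actually establish the root localization.
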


\subsection{Partition quantum groups}

Partition quantum groups were introduced in \cite{freslon2014partition} as a generalization of the easy quantum groups defined by T. Banica and R. Speicher in \cite{banica2009liberation}. They fit into the general setting of compact quantum groups of S.L. Woronowicz. We therefore first recall some basic definitions and results of this theory. The reader may refer for instance to the book \cite{neshveyev2014compact} for details and proofs.

\begin{de}
A \emph{compact quantum group} is a pair $\G = (C(\G), \D)$ where $C(\G)$ is a unital C*-algebra and
\begin{equation*}
\D: C(\G)\rightarrow C(\G)\otimes C(\G)
\end{equation*}
is a unital $*$-homomorphism such that $(\D\otimes \ii)\circ\D = (\ii \otimes \D)\circ \D$ and the linear spans of $\D(C(\G))(1\otimes C(\G))$ and $\D(C(\G))(C(\G)\otimes 1)$ are dense in $C(\G)\otimes C(\G)$ (all the tensor products of C*-algebras are spatial).
\end{de}

The reader should be careful that the notation $C(\G)$ is symbolic in the sense that there is no topological space $\G$. In particular, the C*-algebra $C(\G)$ can (and will in the cases we study) be noncommutative. The fundamental notion for our purpose is that of a finite-dimensional representation.

\begin{de}
Let $\G$ be a compact quantum group. A \emph{representation} of $\G$ of dimension $n$ is a matrix
\begin{equation*}
(u_{ij})_{1\leqslant i, j\leqslant n}\in M_{n}(C(\G)) \simeq C(\G) \otimes M_{n}(\C)
\end{equation*}
such that
\begin{equation*}
\D(u_{ij}) = \displaystyle\sum_{k=1}^{n} u_{ik}\otimes u_{kj}
\end{equation*}
for every $1 \leqslant i, j \leqslant n$. Its \emph{contragredient} representation $\co{u}$ is defined by $\overline{u}_{ij} = u_{ij}^{*}$. A representation $u$ is said to be \emph{unitary} if it is a unitary element of $M_{n}(C(\G))$. The \emph{trivial representation} of $\G$ is $\varepsilon = 1\otimes 1\in C(\G)\otimes \C$.
\end{de}

An \emph{intertwiner} between two representations $u$ and $v$ of dimension respectively $n$ and $m$ is a linear map $T: \C^{n} \rightarrow \C^{m}$ such that
\begin{equation*}
(\ii \otimes T)\circ u = v\circ (\ii \otimes T).
\end{equation*}
The set of intertwiners between $u$ and $v$ is denoted by $\Mor_{\G}(u, v)$, or simply $\Mor(u, v)$ if there is no ambiguity. If there exists a unitary intertwiner between $u$ and $v$, then they are said to be \emph{unitarily equivalent}. A representation $u$ is said to be \emph{irreducible} if $\Mor(u, u) = \C.\Id$. The \emph{tensor product} of two representations $u$ and $v$ is the representation
\begin{equation*}
u\otimes v = u_{12}v_{13}\in C(\G)\otimes M_{n}(\C)\otimes M_{m}(\C) \simeq C(\G)\otimes M_{nm}(\C),
\end{equation*}
where we used the \emph{leg-numbering} notations : for an operator $X$ acting on a twofold tensor product, $X_{ij}$ is the extension of $X$ acting on the $i$-th and $j$-th tensors of a triple tensor product. Compact quantum groups have a tractable representation theory because of the following fundamental result :

\begin{thm}[Woronowicz]\label{thm:peterweyl}
Every unitary representation of a compact quantum group is unitarily equivalent to a direct sum of irreducible unitary representations. Moreover, any irreducible representation is finite-dimensional.
\end{thm}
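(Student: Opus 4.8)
The plan is to derive both assertions from a single averaging construction, whose only external input is the \emph{Haar state}: a state $h$ on $C(\G)$ satisfying the bi-invariance relations $(h\otimes\ii)\circ\D = h(\cdot)1 = (\ii\otimes h)\circ\D$. This is the one ingredient not set up in the preliminaries above, so I would recall its existence first (it is itself obtained by a Ces\`aro/fixed-point averaging argument over the states of $C(\G)$) and then build everything on top of it.

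Given a unitary representation $u$ of dimension $n$, regarded as a unitary in $C(\G)\otimes M_{n}(\C)$, I would introduce the averaging map
\begin{equation*}
E(x) = (h\otimes\ii)\big(u(1\otimes x)u^{*}\big),\qquad x\in M_{n}(\C).
\end{equation*}
Using the comultiplication relation $\D(u_{ij})=\sum_{k}u_{ik}\otimes u_{kj}$ together with the invariance of $h$, a direct computation shows that $u(1\otimes E(x)) = (1\otimes E(x))u$, i.e.\ $E(x)\in\Mor(u,u)$ for every $x$. Moreover $E$ is a unital, positive, and faithful conditional expectation onto the C*-algebra $\Mor(u,u)$; faithfulness comes from the faithfulness of $h$ on the dense $*$-algebra of matrix coefficients. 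The essential feature is that $E$ produces \emph{enough} self-adjoint intertwiners: for self-adjoint $x$, the element $E(x)$ is a self-adjoint member of $\Mor(u,u)$.

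From here the decomposition is spectral theory inside $\Mor(u,u)$. If $u$ is not irreducible then $\Mor(u,u)\neq\C.\Id$, so it contains a non-scalar self-adjoint $S$; the spectral projections of $S$ are norm-limits of polynomials in $S$ and hence again lie in $\Mor(u,u)$, so any nontrivial one is an orthogonal intertwining projection. It splits $\C^{n}$ into two $u$-invariant orthogonal subspaces, exhibiting $u$ as a direct sum of two subrepresentations of strictly smaller dimension; induction on $n$ then gives the decomposition into irreducibles. For the finite-dimensionality of irreducibles in the general (Hilbert-space) setting, the same map is applied to a rank-one projection $\theta$: then $E(\theta)$ is a norm-limit of finite-rank operators, hence \emph{compact}, positive, and nonzero by faithfulness, so one of its spectral projections for a nonzero eigenvalue is finite-rank and cuts out a finite-dimensional subrepresentation --- which for an irreducible forces finiteness of the whole space.

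The step I expect to be the genuine obstacle is the verification that $E$ takes values in the commutant and is faithful: this is exactly where the defining relation of $u$ and the invariance and faithfulness of the Haar state must be combined, and it is the only place requiring real work. In the infinite-dimensional regime there is the additional subtlety of checking compactness of the averaged rank-one operator. Once $E$ is in hand, the spectral splitting and the induction (or its Zornian transfinite version in general) are routine.
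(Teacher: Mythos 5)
This theorem is not proved in the paper: it is quoted as a classical result of Woronowicz, with the reader referred to the book of Neshveyev--Tumanov for details, so there is no internal proof to compare against. Your proposal is, in outline, the standard argument from that literature, and it is correct. The averaging map $E(x)=(h\otimes\ii)\bigl(u(1\otimes x)u^{*}\bigr)$ does land in $\Mor(u,u)$ (the computation combines $\D(u_{ij})=\sum_{k}u_{ik}\otimes u_{kj}$ with the invariance of $h$ exactly as you indicate, giving $u(1\otimes E(x))u^{*}=1\otimes E(x)$), and you correctly flag the one genuine subtlety about faithfulness: $h$ need not be faithful on $C(\G)$, only on the dense $*$-algebra $\Pol(\G)$ of matrix coefficients, which suffices to see that $E$ of a nonzero positive element is nonzero. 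The compactness argument for finite-dimensionality of irreducibles (slicing $u(\theta\otimes 1)u^{*}$ with $\theta$ of finite rank stays in the compacts) is also the standard one.

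Two small remarks. First, for the finite-dimensional decomposition the averaging map is not actually needed: for a \emph{unitary} $u$, the relation $(1\otimes T)u=u(1\otimes T)$ passes to adjoints, so $\Mor(u,u)$ is automatically a unital $*$-subalgebra of $M_{n}(\C)$; if it is not $\C.\Id$ it already contains a nontrivial projection, and the induction runs without invoking $E$. The map $E$ becomes essential only in the infinite-dimensional regime, both to produce a finite-dimensional (hence, by induction, an irreducible) subrepresentation of an arbitrary unitary representation --- which is what makes the Zorn argument close --- and to prove finite-dimensionality of irreducibles. Second, you should be explicit that the Zorn step requires showing that the orthogonal complement of a maximal family of pairwise orthogonal irreducible subrepresentations, if nonzero, is itself a unitary subrepresentation to which the compactness argument applies; as stated this is implicit in ``Zornian transfinite version'' but it is the only place where the two halves of the theorem genuinely interact.
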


This theorem implies that finite-dimensional representations contain all the information about a compact quantum group $\G$. It should therefore be possible to recover $\G$ from its representation theory. Taking a more categorical point of view, this is the content of S.L. Woronowicz's Tannaka-Krein theorem proved in \cite{woronowicz1988tannaka}. We will not give a general statement here, but simply apply it in our setting. To do this, we need to introduce some notations. Let $\G$ be a compact quantum group and let $(u^{x})_{x\in \A}$ be a family of finite-dimensional representations of $\G$, each acting on a finite-dimensional Hilbert space $V^{x}$. If $w$ is a word on $\A$, then we set $u^{\otimes w} = u^{w_{1}}\otimes \dots \otimes u^{w_{n}}$, which is a representation acting on the Hilbert space $V^{\otimes w} = V^{w_{1}}\otimes \dots \otimes V^{w_{n}}$. We refer the reader to \cite[Thm 3.2.8]{freslon2014partition} for a proof of the next result, which is the starting point of the theory of partition quantum groups.

\begin{thm}\label{thm:tannakakrein}
Let $\CC$ be a category of $\A$-coloured partitions and let $N$ be an integer. Then, there exists a unique (up to isomorphism) compact quantum group $\G$ together with representations $(u^{x})_{x\in \A}$ such that
\begin{itemize}
\item Any representation of $\G$ is equivalent to a subrepresentation of a direct sum of tensor products of the representations $u^{x}$.
\item For any words $w$ and $w'$ on $\A$, $\Mor_{\G}(u^{\otimes w}, u^{\otimes w'}) = \Span\{T_{p}, p\in \CC(w, w')\}$.
\end{itemize}
\end{thm}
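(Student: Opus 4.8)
The plan is to deduce the statement from S.L. Woronowicz's Tannaka--Krein theorem \cite{woronowicz1988tannaka}, in the concrete form presented in \cite{neshveyev2014compact}, whose input is a family of intertwiner spaces satisfying a short list of closure and rigidity axioms and whose output is precisely a compact quantum group with generating representations realizing those spaces. Since the maps $T_{p}$ ignore the colouring, I would let every $u^{x}$ act on the same space $V^{x} = \C^{N}$ and consider, for all words $w, w'$ on $\A$, the linear subspaces
\begin{equation*}
\CC(w, w') := \Span\{T_{p} : p\in \CC(w, w')\}\subseteq \LL\big((\C^{N})^{\otimes \vert w\vert}, (\C^{N})^{\otimes \vert w'\vert}\big).
\end{equation*}
Everything then reduces to checking that the collection $(\CC(w,w'))_{w,w'}$ satisfies the axioms of Woronowicz's theorem, after which the two asserted properties are exactly its conclusion. (Note that the linear independence Proposition, hence the restriction $N\geqslant 4$, is \emph{not} needed here: it governs only the precise dimension of these spaces, not the existence of $\G$.)

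The core of the verification is a dictionary translating the category operations on partitions into the tensor-category operations on the maps $T_{p}$. I would first establish the three identities $T_{p\otimes q} = T_{p}\otimes T_{q}$, $T_{p^{*}} = (T_{p})^{*}$, and $T_{q}T_{p} = N^{\rl(q,p)}T_{qp}$, the last carrying a scalar equal to a power of $N$ counting the loops removed during vertical concatenation. These are elementary computations from the definition of $\delta_{p}$; the loop-counting identity is the one demanding genuine care, since one must match the combinatorics of the removed middle block against the contraction of summation indices. Combined with the fact that a category of partitions contains every $x$-identity partition (so that $\id_{(\C^{N})^{\otimes \vert w\vert}} = T_{p}$ for the horizontal concatenation $p$ of the $\idpart$ partitions along $w$) and is stable under all the category operations, these identities show at once that the spaces $\CC(w,w')$ contain the identities and are stable under composition, tensor product, and adjoints.

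It remains to supply the rigidity, i.e.~duality morphisms exhibiting a conjugate for each generating representation. This is exactly where the involution $x\mapsto x^{-1}$ and the rotation operation enter: rotating an $x$-identity partition yields the ``cap'' and ``cup'' partitions joining a point coloured $x$ to a point coloured $x^{-1}$, whose associated maps $R_{x}\colon \C\to \C^{N}\otimes \C^{N}$ and $\co{R}_{x}$ serve as duality morphisms between $u^{x}$ and $u^{x^{-1}}$. I would check that they lie in the relevant spaces (being the $T_{p}$ of partitions forced into $\CC$ by stability under rotation) and that they satisfy the zig-zag equations up to the scalar $N$ dictated by the loop identity above, which suffices to produce categorical conjugates. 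The \textbf{main obstacle} is precisely this step: one must track the normalizing powers of $N$ through the rotations and compositions so that the snake relations close up consistently, and confirm that the rotated partitions genuinely furnish duality morphisms for \emph{every} colour of $\A$.

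With all axioms in place, Woronowicz's theorem produces a compact quantum group $\G$ with generating representations $(u^{x})_{x\in \A}$ such that $\Mor_{\G}(u^{\otimes w}, u^{\otimes w'}) = \CC(w,w')$ for all $w, w'$, and such that every representation of $\G$ is a subrepresentation of a direct sum of tensor products of the $u^{x}$; these are the two asserted properties. Uniqueness up to isomorphism is likewise part of the conclusion, since the theorem asserts that a compact quantum group is determined by its concrete representation category, so any two solutions with the same intertwiner spaces are isomorphic via an isomorphism intertwining the generating representations.
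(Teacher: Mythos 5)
Your proposal is correct and is exactly the standard argument: verify that the spans of the $T_{p}$ form a concrete rigid tensor category (using $T_{p\otimes q}=T_{p}\otimes T_{q}$, $T_{p^{*}}=T_{p}^{*}$, $T_{q}T_{p}=N^{\rl(q,p)}T_{qp}$, and the rotated identity partitions as duality morphisms) and then invoke Woronowicz's Tannaka--Krein theorem. The paper does not prove this statement itself but defers to \cite[Thm 3.2.8]{freslon2014partition}, where essentially this same verification is carried out, so your route coincides with the intended one; your observation that the linear independence (and hence $N\geqslant 4$) is not needed for existence is also correct.
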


\begin{de}
Let $\CC$ be a category of partitions and let $N$ be an integer. The compact quantum group given by Theorem \ref{thm:tannakakrein} will be denoted by $\G_{N}(\CC)$ and called the \emph{partition quantum group} associated to $\CC$ and $N$.
\end{de}

\subsection{Projective partitions and representations}\label{subsec:projective}

Let $\CC$ be a category of partitions and let $N$ be an integer. The compact quantum group $\G_{N}(\CC)$ is defined through its category of representations, which is itself completely determined by $\CC$. Hence, it should be possible to describe the representations of $\G$ solely in terms of the partitions in $\CC$. This was done in a joint work with M. Weber \cite{freslon2013representation} (it is restricted to the one-colour case but generalizes straightforwardly to the general case, see \cite{freslon2014partition}). A crucial role is played by the so-called \emph{projective partitions}, which we now define.

\begin{de}\label{de:projective}
A partition $p\in P(w, w)$ is said to be \emph{projective} if $pp = p = p^{*}$. Moreover,
\begin{itemize}
\item A projective partition $p$ is said to be \emph{dominated} by $q$ if $qp = p$. Then, $pq = p$ and we write $p\preceq q$.
\item Two projective partitions $p$ and $q$ are said to be \emph{equivalent} in a category of partitions $\CC$ if there exists $r\in \CC$ such that $r^{*}r = p$ and $rr^{*} = q$. We then write $p\sim q$ or $p\sim_{\CC} q$ if we want to keep track of the category of partitions.
\end{itemize}
Note that if $p\sim q$, then $t(p) = t(q)$.
\end{de}

\begin{rem}\label{rem:noncrossingdecomposition}
A projective noncrossing partition decomposes always as horizontal concatenations of through-block projective partitions with exactly one through-block and non-through-block projective partitions with endpoints connected. This decomposition will be used repeatedly throughout this work.
\end{rem}

The order relation $\preceq$ is not to be confused with the usual order relation $\leqslant$ (sometimes called "being coarser") on partitions. Note also that if $p$ and $q$ are equivalent in $\CC$, then they in fact both belong to $\CC$. Moreover, the equivalence is implemented by a unique partition denoted by $r_{p_{2}}^{p_{1}}$. Here is a kind of converse to this statement, proved in \cite[Prop 2.18]{freslon2013representation} :

\begin{prop}
For any partition $r$, the partitions $r^{*}r$ and $rr^{*}$ are both projective partitions (and $r$ is an equivalence between them).
\end{prop}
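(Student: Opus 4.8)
The plan is to reduce all four assertions to the single identity $rr^{*}r = r$, and then to prove that identity by passing to the intertwiner maps $T_{\bullet}$. The self-adjointness is free: writing $p = r^{*}r$ and $q = rr^{*}$, the relations $(ab)^{*} = b^{*}a^{*}$ and $r^{**} = r$ for the reflection operation give $p^{*} = r^{*}r^{**} = r^{*}r = p$ and likewise $q^{*} = q$. Next, vertical concatenation is associative (the coloured partitions do form a category), so $pp = r^{*}(rr^{*}r)$ and $qq = (rr^{*}r)r^{*}$; hence both idempotency statements $pp = p$ and $qq = q$ follow at once from the Key Lemma $rr^{*}r = r$. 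Finally, granting that $p$ and $q$ are projective, the relations $r^{*}r = p$ and $rr^{*} = q$ are exactly what the definition of equivalence demands, so $r$ witnesses $p \sim q$. Everything therefore hinges on the Key Lemma.

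To prove $rr^{*}r = r$ I would work with the linear maps, using $T_{r^{*}} = T_{r}^{*}$ and the loop rule $T_{a}T_{b} = N^{\rl(a,b)}T_{ab}$. The crucial preliminary step is to show that $T_{r}^{*}T_{r} = N^{m}P$, where $m$ is the number of non-through-blocks of $r$ and $P$ is an orthogonal projection. I would compute the matrix coefficient $(T_{r}^{*}T_{r})_{i,i'} = \#\{j : \delta_{r}(i,j) = \delta_{r}(i',j) = 1\}$ and read it off block by block: a through-block forces $i$ and $i'$ to agree and pins the corresponding coordinates of $j$; each lower non-through-block contributes an independent factor of $N$; each upper non-through-block only constrains $i$ and $i'$ separately. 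This exhibits $T_{r}^{*}T_{r}$ as a block-diagonal matrix of constant-size all-ones blocks, that is, a fixed power $N^{m}$ times the orthogonal projection $P$ onto $(\ker T_{r})^{\perp}$. Since $1-P$ projects onto $\ker T_{r}$, we have $T_{r}P = T_{r}$, and therefore $T_{r}T_{r}^{*}T_{r} = N^{m}T_{r}$.

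It then remains to transfer this back to partitions. The loop rule gives $T_{rr^{*}r} = N^{-\gamma}T_{r}T_{r}^{*}T_{r} = N^{m-\gamma}T_{r}$, where $\gamma$ is the total number of loops removed in forming $rr^{*}r$. But $rr^{*}r$ is a genuine partition, so $T_{rr^{*}r}$ has entries in $\{0,1\}$; comparing with the nonzero $\{0,1\}$-matrix $T_{r}$ forces the scalar $N^{m-\gamma}$ to equal $1$, whence $T_{rr^{*}r} = T_{r}$ (and, incidentally, $\gamma = m$). Choosing $N$ large enough that the maps attached to partitions on the relevant bounded number of points are linearly independent, this equality of maps upgrades to the equality $rr^{*}r = r$ of partitions; and since this is a purely combinatorial statement, it then holds for every $N$.

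The main obstacle is the projection structure of $T_{r}^{*}T_{r}$, since this is where the only genuine content sits: it is equivalent to the fact that exactly one loop is created for each non-through-block of $r$ when $r$ is stacked against its mirror image $r^{*}$. One could instead prove $rr^{*}r = r$ directly and combinatorially, tracking how each through-block of $r$ is reconstructed by the double reflection and how each non-through-block closes up into a single loop, but the loop bookkeeping is considerably cleaner on the operator side.
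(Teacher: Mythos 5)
Your proof is correct, but it takes a genuinely different route from the one the paper relies on: the paper does not prove this proposition itself, it imports it from \cite[Prop 2.18]{freslon2013representation}, where the key identity $rr^{*}r=r$ is verified directly on the partitions, by tracking how each through-block of $r$ is reconstituted under the double reflection and how each non-through-block closes up into a loop that gets erased. You instead transport everything to the maps $T_{p}$, use that $T_{r}^{*}T_{r}$ is a positive scalar times an orthogonal projection to get $T_{r}T_{r}^{*}T_{r}=N^{m}T_{r}$, and then let the rigidity of $\{0,1\}$-matrices force the overall scalar in $T_{rr^{*}r}=N^{m-\gamma}T_{r}$ to be $1$. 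This buys you exactly what you advertise: all the loop bookkeeping is replaced by linear algebra, and you get the loop count $\gamma=m$ and the normalization $T_{r}T_{r}^{*}T_{r}=N^{m}T_{r}$ for free (this constant is the $\lambda_{p}$ that reappears in the proof of Proposition \ref{prop:commutationrelations}). The combinatorial proof buys self-containedness and independence of $N$ from the start, with no need for the final "large $N$, then all $N$" descent. Two small remarks on your write-up. First, the exponent bookkeeping is slightly blurred: the lower non-through-blocks contribute their factor of $N$ through the free summation over $j$, while the upper non-through-blocks contribute theirs through the size $N^{u}$ of the all-ones diagonal blocks; together these do give $N^{m}$ with $m$ the total number of non-through-blocks, so your stated formula is right, but your sentence "each upper non-through-block only constrains $i$ and $i'$ separately" hides where their share of the exponent comes from. (In any case the precise exponent is immaterial, since your $\{0,1\}$ comparison at the end determines the scalar.) Second, the final upgrade from $T_{rr^{*}r}=T_{r}$ to $rr^{*}r=r$ only needs injectivity of $p\mapsto T_{p}$, which holds for every $N\geqslant 2$ (two distinct blocks are separated by a two-valued index function constant on blocks), so you do not actually need the linear-independence statement nor the restriction to large $N$.
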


It was shown in \cite{freslon2013representation} that one can associate to any projective partition $p\in \CC(w, w)$ a subrepresentation $u_{p}$ of $u^{\otimes w}$. The study of these representations can be very complicated in general but in the noncrossing case things become more tractable.

\begin{thm}\label{thm:partitionrepresentations}
Let $N\geqslant 4$ be an integer and let $\CC$ be a category of noncrossing partitions. Then,
\begin{itemize}
\item $u_{p}$ is a non-zero irreducible representation for any projective partition $p$,
\item $u_{p}\sim u_{q}$ if and only if $p\sim q$,
\item $u^{\otimes w} = \displaystyle\bigoplus_{p\in \Proj(w)} u_{p}$ as a direct sum of representations, where $\Proj(w)$ denotes the set of projective partitions in $\CC(w, w)$. In particular, any irreducible representation is equivalent to $u_{p}$ for some projective partition $p$.
\end{itemize}
\end{thm}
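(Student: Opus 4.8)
The plan is to realise each $u_{p}$ as the range of an explicit orthogonal projection in the morphism algebra of $u^{\otimes w}$ and to reduce all three assertions to a single combinatorial statement about sandwiched noncrossing partitions. For a projective partition $p\in\CC(w,w)$ one has $T_{p}^{*}=T_{p^{*}}=T_{p}$ and $T_{p}T_{p}=N^{\rl(p,p)}T_{p}$, so that $P_{p}:=N^{-\rl(p,p)}T_{p}$ is a self-adjoint idempotent, hence an orthogonal projection lying in $\Mor(u^{\otimes w},u^{\otimes w})$ by Theorem~\ref{thm:tannakakrein}. It is non-zero since $T_{p}\neq 0$ by linear independence of the $(T_{q})$ for $N\geqslant 4$, and the order relation $\preceq$ translates into inclusion of ranges, $\mathrm{Im}(P_{p})\subseteq\mathrm{Im}(P_{q})$ whenever $p\preceq q$. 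Following \cite{freslon2013representation}, the representation $u_{p}$ is the subrepresentation carried by the corrected projection $\mathring P_{p}$ onto $\mathrm{Im}(P_{p})\ominus\bigvee_{q\precneq p}\mathrm{Im}(P_{q})$, the join running over projective $q$.

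The technical core, on which everything rests, is the following sandwiching lemma: for projective $p\in\CC(w,w)$, $q\in\CC(w',w')$ and any $r\in\CC(w,w')$ one has $P_{q}T_{r}P_{p}=\lambda\,T_{qrp}$ for a scalar $\lambda$ (a power of $N$ accounting for the loops), and the noncrossing partition $s:=qrp$ satisfies $t(s)\leqslant\min(t(p),t(q))$. Using the decomposition of projective noncrossing partitions into one-through-block pieces and closed non-through-blocks from Remark~\ref{rem:noncrossingdecomposition}, the equality $t(s)=t(p)=t(q)$ can occur only when $p\sim q$, in which case the absence of crossings pins $s$ down to the unique equivalence partition $r_{p}^{q}$. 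When $t(s)$ drops, the projective partitions $s^{*}s$ and $ss^{*}$ are strictly dominated by $p$ and $q$; since $T_{s}=T_{s}P_{s^{*}s}$ and $\mathrm{Im}(P_{s^{*}s})$ is removed by $\mathring P_{p}$, the map $P_{q}T_{r}P_{p}$ is then annihilated after correction. I expect this lemma — the rigidity forced by noncrossingness together with the linear independence of the $(T_{p})$ — to be the main obstacle; every remaining step is formal once it is in place.

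Granting the lemma, irreducibility (first item) is the case $p=q$: it gives $\mathring P_{p}\,\Span\{T_{r}\}\,\mathring P_{p}=\C\,\mathring P_{p}$, so $\Mor(u_{p},u_{p})=\C\,\Id$, while $u_{p}\neq 0$ because $P_{p}$ is not the join of the strictly smaller $P_{q}$, again by linear independence. For the second item, if $p\sim q$ via $r$ with $r^{*}r=p$ and $rr^{*}=q$, then $T_{r}$ restricts to a unitary intertwiner $u_{p}\to u_{q}$; conversely a non-zero element of $\Mor(u_{p},u_{q})$ yields, through the lemma, a non-zero $T_{qrp}$ with $t(qrp)=t(p)=t(q)$, which forces $p\sim q$.

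Finally, for the decomposition, the lemma applied with $r$ the identity partition shows that the corrected projections $\mathring P_{q}$, $q\in\Proj(w)$, are pairwise orthogonal, and by induction on $t$ along the poset $(\Proj(w),\preceq)$ that $P_{p}=\sum_{q\preceq p}\mathring P_{q}$ with orthogonal ranges. Since the identity partition on $w$ is the top element of $\Proj(w)$ and its associated projection is $\Id$, summing over $q\preceq\id_{w}$ gives $\Id=\sum_{q\in\Proj(w)}\mathring P_{q}$, whence $u^{\otimes w}=\bigoplus_{p\in\Proj(w)}u_{p}$. The last assertion then follows from the first item of Theorem~\ref{thm:tannakakrein}: every irreducible representation is a subrepresentation of some $u^{\otimes w}$, hence equivalent to one of the $u_{p}$.
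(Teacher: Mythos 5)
Your proposal reconstructs, essentially correctly, the proof strategy of \cite{freslon2013representation}; note that the present paper does not prove Theorem \ref{thm:partitionrepresentations} at all but imports it from that reference, where $u_{p}$ is indeed defined as the subrepresentation carried by the corrected projection $\mathring{P}_{p}$ and where all three assertions are reduced to the computation of the sandwiched maps $\mathring{P}_{q}T_{r}\mathring{P}_{p}$. Your formal reductions are sound: once the $\mathring{P}_{q}$ are known to be pairwise orthogonal, one gets by induction $\mathring{P}_{p}\in N^{-\rl(p,p)}T_{p}+\Span\{T_{q}:q\preceq p,\ q\neq p\}$, so non-vanishing does follow from linear independence; the identity partition is the top element of $(\Proj(w),\preceq)$ with associated projection $\Id$, which gives the direct sum decomposition; and the last assertion follows from Theorems \ref{thm:tannakakrein} and \ref{thm:peterweyl} as you say.

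The gap is the one you flag yourself: the sandwiching lemma is where the entire content of the theorem lives, and it is asserted rather than proved. It is exactly the point where both hypotheses enter --- for crossing categories the $u_{p}$ need not be irreducible, so no formal argument can bypass it. Two sub-claims in particular require genuine combinatorial work. First, that for noncrossing projective partitions $p'\preceq p$ with $t(p')=t(p)$ one must have $p'=p$: this is what upgrades the numerical condition $t(qrp)=t(p)=t(q)$ to the assertion that $s=qrp$ satisfies $s^{*}s=p$ and $ss^{*}=q$, i.e.\ that $s$ implements an equivalence between $p$ and $q$ themselves and not merely between strictly dominated subprojections (which would be killed by the correction). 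Second, the uniqueness of the partition implementing a given equivalence $p\sim q$, which is what collapses $\mathring{P}_{q}\Span\{T_{r}\}\mathring{P}_{p}$ to a line and yields irreducibility together with the dichotomy in the second item. Both facts are true and are proved in \cite{freslon2013representation} via the through-block decomposition you invoke from Remark \ref{rem:noncrossingdecomposition}, but as written your argument defers precisely the part that cannot be recovered formally from what you have established.
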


\section{General results on noncrossing partition quantum groups}\label{sec:structure}

The aim of this section is to give some general results on the class of compact quantum groups associated to categories of noncrossing partitions. As will appear in this work, a complete classification seems out of reach for the moment. However, one can at least describe some constructions which preserve this structure. One way of completing a classification in that direction would then be to prove that any noncrossing partition quantum group can be obtained by applying these constructions to a certain elementary class (see Section \ref{sec:summary} for details).

We first fix some shorthand notations. If $w$ and $w'$ are words on $\A$, we denote by $\pi(w, w')$ the unique one-block partition in $NC^{\A}(w, w')$. We also denote by $\beta(w, w')$ the partition in $NC^{\A}(w, w')$ having exactly one upper block and one lower block. These partitions can be pictorially represented as follows :

\begin{center}
\begin{tikzpicture}[scale=0.5]
\draw (-2.5,0.5) -- (2.5,0.5);
\draw (-2.5,-0.5) -- (2.5,-0.5);

\draw (-2.5,0.5) -- (-2.5,1.5);
\draw (2.5,0.5) -- (2.5,1.5);
\draw (-2.5,-0.5) -- (-2.5,-1.5);
\draw (2.5,-0.5) -- (2.5,-1.5);

\draw (-1,0.5) -- (-1,1.5);
\draw (1,0.5) -- (1,1.5);
\draw (-1,-0.5) -- (-1,-1.5);
\draw (1,-0.5) -- (1,-1.5);

\draw (0,0.5) -- (0,-0.5);

\draw (0,1.5) node[below]{$\dots$};
\draw (0,-1.5) node[above]{$\dots$};

\draw (-2.5,1.5) node[above]{$w_{1}$};
\draw (-1,1.5) node[above]{$w_{2}$};
\draw (1,1.5) node[above]{$w_{n-1}$};
\draw (2.5,1.5) node[above]{$w_{n}$};

\draw (-2.5,-1.5) node[below]{$w'_{1}$};
\draw (-1,-1.5) node[below]{$w'_{2}$};
\draw (1,-1.5) node[below]{$w'_{k-1}$};
\draw (2.5,-1.5) node[below]{$w'_{k}$};

\draw (-3,0) node[left]{$\pi(w, w') = $};
\end{tikzpicture}
\begin{tikzpicture}[scale=0.5]
\draw (-2.5,0.5) -- (2.5,0.5);
\draw (-2.5,-0.5) -- (2.5,-0.5);

\draw (-2.5,0.5) -- (-2.5,1.5);
\draw (2.5,0.5) -- (2.5,1.5);
\draw (-2.5,-0.5) -- (-2.5,-1.5);
\draw (2.5,-0.5) -- (2.5,-1.5);

\draw (-1,0.5) -- (-1,1.5);
\draw (1,0.5) -- (1,1.5);
\draw (-1,-0.5) -- (-1,-1.5);
\draw (1,-0.5) -- (1,-1.5);

\draw (0,1.5) node[below]{$\dots$};
\draw (0,-1.5) node[above]{$\dots$};

\draw (-2.5,1.5) node[above]{$w_{1}$};
\draw (-1,1.5) node[above]{$w_{2}$};
\draw (1,1.5) node[above]{$w_{n-1}$};
\draw (2.5,1.5) node[above]{$w_{n}$};

\draw (-2.5,-1.5) node[below]{$w'_{1}$};
\draw (-1,-1.5) node[below]{$w'_{2}$};
\draw (1,-1.5) node[below]{$w'_{k-1}$};
\draw (2.5,-1.5) node[below]{$w'_{k}$};

\draw (-3,0) node[left]{$\beta(w, w') = $};
\end{tikzpicture}
\end{center}
Note that for $x\in \A$, $\pi(x, x)$ is simply the $x$-identity partition.

\subsection{One-dimensional representations}\label{subsec:oned}

In this subsection we study one-dimensional representations, which are the crucial point to understand noncrossing partition quantum groups. 

\subsubsection{Structure}

In \cite{freslon2014partition}, we classified a class of noncrossing partition quantum groups called free quantum groups, which can be defined as follows :

\begin{de}\label{de:free}
A partition quantum group is \emph{free} if it has no nontrivial one-dimensional representation.
\end{de}

Thus, the rest of the classification must deal with one-dimensional representations. At the level of categories of partitions, this can be easily translated (see \cite[Thm 4.18]{freslon2013fusion} for details).

\begin{thm}
Let $\CC$ be a category of noncrossing partitions and let $N\geqslant 4$ be an integer. Then, $\G_{N}(\CC)$ is free if and only if $\CC$ is \emph{block-stable}, i.e.~for any partition $p\in \CC$ and any block $b$ of $p$, we have $b\in \CC$.
\end{thm}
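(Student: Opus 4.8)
The plan is to translate the statement entirely into the language of projective partitions via Theorem~\ref{thm:partitionrepresentations} and then reduce it to a dimension count. By that theorem every irreducible representation is of the form $u_{p}$ for a projective partition $p\in\CC$, with $u_{p}\sim u_{q}$ if and only if $p\sim q$, and $u^{\otimes w}=\bigoplus_{q\in\Proj(w)}u_{q}$. The trivial representation is $u_{q}$ for the projective partitions $q$ with $t(q)=0$ (the pure non-through-block projections, whose map $T_{q}$ has rank one), and since $p\sim q$ forces $t(p)=t(q)$, a \emph{nontrivial} one-dimensional representation exists exactly when there is a projective $p\in\CC$ with $t(p)\geqslant 1$ and $\dim u_{p}=1$. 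The computational engine I would set up first is the dimension formula: a direct inspection of the definition of $T_{p}$ shows $\mathrm{rank}(T_{p})=N^{t(p)}$ for every projective $p$, and the orthogonalisation procedure defining the $u_{q}$ yields $\mathrm{rank}(T_{p})=\sum_{q\preceq p}\dim u_{q}$, the sum running over projective partitions $q\in\CC$ dominated by $p$. Möbius inversion over the finite interval below $p$ then expresses $\dim u_{p}$ in terms of the values $N^{t(q)}$, so that the whole statement becomes a combinatorial question about which smaller projective partitions are available in $\CC$.

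For the direction \emph{block-stable $\Rightarrow$ free}, I would establish the lower bound $\dim u_{p}\geqslant N-1$ for every projective $p$ with $t(p)\geqslant 1$; as $N\geqslant 4$ this excludes one-dimensional nontrivial representations outright. Using the decomposition of Remark~\ref{rem:noncrossingdecomposition}, write $p$ as a horizontal concatenation of single-through-block projections and non-through-block projections. Block-stability is precisely what allows me to isolate one block of $p$ and hence to detach a one-point identity line from a through-block, producing a strictly smaller projective partition $q\precneqq p$ that still lies in $\CC$. Feeding the resulting chain of dominated partitions into the relation $\mathrm{rank}(T_{p})=\sum_{q\preceq p}\dim u_{q}$ and isolating the top term $N^{t(p)}$, one controls the corrections and obtains $\dim u_{p}\geqslant N-1>1$. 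Thus no nontrivial one-dimensional representation occurs and $\G_{N}(\CC)$ is free.

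For the converse I argue contrapositively: if $\CC$ is not block-stable I must exhibit a nontrivial one-dimensional representation. Pick $p\in\CC$ and a block $b$ whose associated one-block partition does not lie in $\CC$. The guiding principle is that in a free (block-stable) quantum group every fixed vector of a tensor power factors as a tensor product of fixed vectors attached to individual blocks, each of which must itself belong to $\CC$. Here the rotation of $p$ that turns $b$ into a single block on one line provides an element of $\Mor(\varepsilon,u^{\otimes w})$ that cannot factor through the block $b$, because the partition realising $b$ alone is missing from $\CC$. I would then show that this extra, non-factoring intertwiner forces the multiplicity count $\dim\Mor(\varepsilon,u^{\otimes w})$ to exceed the value predicted by block-factored vectors, which through the dimension formula above can only be accounted for by the appearance of an irreducible of dimension one distinct from $\varepsilon$.

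The main obstacle is precisely the last step: pinning down \emph{which} projective partition carries the nontrivial one-dimensional representation and verifying that the Möbius computation over the interval below it collapses to $1$, rather than to $N-1$ as for an ordinary through-block. This requires a careful bookkeeping of the poset of dominated projective partitions in both colours, keeping track of through-blocks versus non-through-blocks, and of the exact way in which the absence of $b$ from $\CC$ removes the intermediate projections that would otherwise raise the dimension.
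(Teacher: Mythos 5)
Your proposal founders on a single but decisive misidentification: you equate ``one-dimensional'' with ``trivial''. You write that a nontrivial one-dimensional representation exists exactly when there is a projective $p\in\CC$ with $t(p)\geqslant 1$ and $\dim u_{p}=1$, and your whole strategy (the rank formula $\mathrm{rank}(T_{p})=N^{t(p)}$, the M\"obius inversion, the bound $\dim u_{p}\geqslant N-1$ for through-block projectives) is aimed at ruling such $p$ out. But Lemma \ref{lem:nonthroughblockstructure} says that for a noncrossing category and $N\geqslant 4$ one has $\dim u_{p}=1$ \emph{if and only if} $t(p)=0$; there is never a one-dimensional $u_{p}$ with $t(p)\geqslant 1$, block-stable or not, so your forward direction proves nothing about freeness. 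The nontrivial one-dimensional representations all sit among the non-through-block projectives $p=b^{*}b$ with $b$ a partition on one line, and by the lemma recalled at the end of Subsection \ref{subsec:oned}, $u_{b^{*}b}$ is trivial precisely when $b$ itself belongs to $\CC$. The theorem is therefore equivalent to the purely combinatorial assertion that $\CC$ is block-stable if and only if every one-line partition $b$ with $b^{*}b\in\CC$ lies in $\CC$. Your final paragraph, which looks for a through-block projective whose dimension ``collapses to $1$'', is chasing an object that cannot exist.

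What a correct argument needs, and what your proposal omits, is the combinatorial passage between blocks and one-line rows. For block-stable $\Rightarrow$ free: given $b^{*}b\in\CC$ with $t=0$, each block of $b$ is a block of $b^{*}b\in\CC$, hence lies in $\CC$ by block-stability, and one reconstructs $b$ from its blocks by rotations and tensor products (induction on the number of blocks, using that a noncrossing one-line partition always has an interval block); then $b\in\CC$ and $u_{b^{*}b}$ is trivial. For free $\Rightarrow$ block-stable: given a block $\beta$ of some $p\in\CC$, one must first produce $\beta^{*}\beta\in\CC$ --- via Proposition \ref{prop:intervalblockstable} applied to the minimal full subpartition containing $\beta$, followed by capping off the nested pieces $b_{i}$, each $b_{i}^{*}b_{i}$ being one-dimensional, hence trivial by freeness, hence removable --- and only then does freeness give $\beta\in\CC$. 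Your sketch jumps from ``$\beta$ is a block of $p$'' to an intertwiner in $\Mor(\varepsilon,u^{\otimes w})$ without this isolation step, and the counting you propose (comparing $\dim\Mor(\varepsilon,u^{\otimes w})$ with block-factored fixed vectors) is never pinned down. Note finally that the paper does not prove this statement itself: it quotes it from \cite{freslon2013fusion}, so there is no in-paper proof to compare against, but the route just described is the standard one and is what the surrounding lemmas of Subsection \ref{subsec:oned} are set up for.
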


Dealing with categories of partitions which are not block-stable is difficult in general and one aim of this subsection is to develop some tools to describe the effects of this lack of block-stability. We start with some elementary properties.

\begin{lem}\label{lem:nonthroughblockstructure}
Let $\CC$ be a noncrossing category of partitions and let $p$ be a projective partition. Then, $u_{p}$ is one-dimensional if and only if $t(p)=0$. Moreover, $p$ is then equal to $b^{*}b$ for some partition $b$ lying on one line.
\end{lem}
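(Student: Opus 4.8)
The plan is to read off $\dim u_{p}$ from the operator $T_{p}$, using two facts: that $u_{p}$ is carried by a subspace of $\mathrm{Range}(T_{p})$, so $\dim u_{p}\le \mathrm{rank}(T_{p})$, and that by the construction of \cite{freslon2013representation} one has an orthogonal decomposition $\mathrm{Range}(T_{p})=\bigoplus_{q\preceq p}u_{q}$. A short computation with Remark \ref{rem:noncrossingdecomposition} gives $\mathrm{rank}(T_{p})=N^{t(p)}$, since each one-through-block piece contributes a factor $N$ to the rank while each non-through piece contributes a factor $1$ (a nested non-through block only inserts a fixed vector and adds no rank).

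For the implication $t(p)=0\Rightarrow\dim u_{p}=1$ together with the structural ``moreover'' statement, I would first produce $b$. Since $t(p)=0$, Remark \ref{rem:noncrossingdecomposition} writes $p=\bigotimes_{i}c_{i}$ with each $c_{i}$ a non-through projective partition. As $c_{i}=c_{i}^{*}$ has no through-block, its lower row is the mirror image of its upper row, so $c_{i}=b_{i}^{*}b_{i}$, where $b_{i}$ is the one-line partition carried by the upper row of $c_{i}$; setting $b=\bigotimes_{i}b_{i}$ and using that $*$ and horizontal concatenation commute (interchange law) gives $p=b^{*}b$ with $b$ lying on one line. Then $T_{p}=T_{b}^{*}T_{b}$ factors through $(\C^{N})^{\otimes 0}=\C$, hence $\mathrm{rank}(T_{p})\le 1$; it is nonzero because $T_{b}\neq 0$, so $\dim u_{p}\le \mathrm{rank}(T_{p})=1$, and since $u_{p}\neq 0$ by Theorem \ref{thm:partitionrepresentations} we conclude $\dim u_{p}=1$.

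The converse $t(p)\ge 1\Rightarrow\dim u_{p}\ge 2$ is the delicate half. From the decomposition above, $\dim u_{p}=N^{t(p)}-\sum_{q\precneqq p}\dim u_{q}$, and every $q\precneqq p$ has strictly fewer through-blocks than $p$. When $t(p)=1$ this forces $t(q)=0$, hence $\dim u_{q}=1$ by the first half, and the projective partitions strictly dominated by $p$ are exactly those obtained by capping the unique through-block; working this out yields $\dim u_{p}=N-1\ge 3$. The general case would follow by induction on $t(p)$, the point being that the correction $\sum_{q\precneqq p}\dim u_{q}$ is then a polynomial in $N$ of degree at most $t(p)-1$, while $\dim u_{p}$ is monic of degree $t(p)$.

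The main obstacle is precisely this lower bound in the converse: one must control the poset of projective partitions strictly dominated by $p$ (equivalently, bound the correction term) well enough to guarantee $\dim u_{p}\ge 2$ for every $N\ge 4$, and in particular to exclude the borderline case where the corrections would bring the dimension down to exactly $1$. This is where I would rely on the explicit description of the dominated partitions and of the representations $u_{q}$ from \cite{freslon2013representation}; the easy half, giving rank one and hence dimension one, needs none of this machinery.
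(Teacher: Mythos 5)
The paper disposes of this lemma in two lines: the equivalence between $\dim(u_{p})=1$ and $t(p)=0$ is quoted from \cite[Lem 5.1]{freslon2013fusion}, and the ``moreover'' part follows by taking $b$ to be the whole upper row of $p$, since then $b^{*}b=p^{*}p=p$. Your forward direction is correct and is essentially the argument behind the cited result: $t(p)=0$ forces $T_{p}$ to factor through $\C$, hence $\mathrm{rank}(T_{p})=1$, and $\dim(u_{p})=1$ because $u_{p}\neq 0$. Your derivation of $p=b^{*}b$ via Remark \ref{rem:noncrossingdecomposition} is also correct, but over-engineered: projectivity alone gives $p=p^{*}p=b^{*}b$ with $b$ the upper row, no block decomposition needed.

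The converse is where your argument has a genuine gap, and the strategy you sketch does not close as stated. From $\dim(u_{p})=N^{t(p)}-\sum_{q\preceq p,\, q\neq p}\dim(u_{q})$ you need an upper bound on the correction term, but the only inputs your induction supplies are $\dim(u_{q})\leqslant N^{t(q)}$ together with a count of the dominated partitions; bounding the number of $q$ with $t(q)=k$ by $\binom{t(p)}{k}$ (choices of through-blocks to cap) gives $\sum\dim(u_{q})\leqslant (N+1)^{t(p)}-N^{t(p)}$, which already exceeds $N^{t(p)}-2$ for $N=4$ and $t(p)=4$. So ``the correction is a polynomial of degree at most $t(p)-1$'' is an asymptotic statement in $N$ and proves nothing at a fixed $N\geqslant 4$. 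To make the converse work one must exploit the actual structure of the poset of dominated noncrossing projective partitions, or the resulting recursion, which yields quantum-integer-type lower bounds that stay $\geqslant 2$ for all $N\geqslant 4$; this is precisely the content of \cite[Lem 5.1]{freslon2013fusion} that the paper invokes rather than reproves. You flag this obstacle yourself, so the proposal should be read as a correct proof of one implication and of the structural claim, together with a programme --- not a proof --- for the other implication.
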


\begin{proof}
The fact that $\dim(u_{p}) = 1$ if and only if $t(p) = 0$ was proved in \cite[Lem 5.1]{freslon2013fusion}. Under this assumption, let $b$ be the upper row of $p$. Then, $b^{*}b = p^{*}p = p$.
\end{proof}

Thus, we will focus on non-through-block projective partitions. There is a useful way to produce such partitions using the following notion :

\begin{de}
Let $p$ be a partition of $\{1, \cdots, k\}$. A subpartition $q\subset p$ is said to be \emph{full} if it is a partition of a subset of the form $\{a, a+1, \cdots, a+b\}$ for some $1\leqslant a\leqslant a+b\leqslant k$.
\end{de}

Pictorially, this definition means that we can rotate $p$ on one line so that between any two points of $q$, all the points are also in $q$. The idea is that a full subpartition can be "isolated" from the all the other points and this makes it possible to extract it as a projective partition without through-blocks. Note that the main example of full subpartitions are intervals.

\begin{prop}\label{prop:intervalblockstable}
Let $p\in \CC$ and let $q\subset p$ be a \emph{full} subpartition in $p$ lying on one line. Then, $q^{*}q\in \CC$.
\end{prop}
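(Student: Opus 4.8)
The plan is to realise $q^{*}q$ as $\tilde{p}^{*}\tilde{p}$ for a suitably rotated version $\tilde{p}$ of $p$, exploiting the fact that a full subpartition is not connected to the rest of the partition.

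First I would use the definition of fullness to put everything in a convenient normal form. Since $\CC$ is stable under the category operations, I may rotate $p$ so that it lies entirely on one row, say $p\in \CC(w, \emptyset)$ with $w = w_{1}\dots w_{k}$, and so that the points of $q$ occupy a set of consecutive positions $\{a, a+1, \dots, a+b\}$, carrying the subword $v = w_{a}\dots w_{a+b}$. The crucial consequence of fullness is that the blocks of $q$ are entire blocks of $p$ and that together they partition exactly $\{a, \dots, a+b\}$; hence \emph{no} block of $p$ contains at the same time a point in $\{a, \dots, a+b\}$ and a point outside it.

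Next I would build the partition $\tilde{p}$ obtained from $p$ by rotating every point in the positions $1, \dots, a-1$ down to the lower-left and every point in the positions $a+b+1, \dots, k$ down to the lower-right, while leaving the points of $q$ on the upper row. By stability under rotation, $\tilde{p}\in \CC$, and $\tilde{p}\in \CC(v, s)$ for the word $s$ produced on the lower row by these rotations. By the observation above, each block of $p$ is either entirely inner or entirely outer, so it is carried wholly to the upper row or wholly to the lower row; consequently $\tilde{p}$ has \emph{no} through-block, its upper row reproduces $q$ and its lower row carries the ``outer'' partition formed by the remaining points.

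Finally I would compute $\tilde{p}^{*}\tilde{p}\in \CC(v, v)$, which is projective since $r^{*}r$ is projective for any $r$. Because $\tilde{p}$ has no through-block, in the vertical concatenation $\tilde{p}^{*}\tilde{p}$ every middle point fails to be connected to an upper point of $\tilde{p}$ or to a lower point of $\tilde{p}^{*}$; thus all middle points are absorbed into loops and removed (contributing only to $\rl(\tilde{p}^{*}, \tilde{p})$), while the $q$-structure on the upper row and the $q$-structure on the lower row survive without being joined to one another. This is precisely the partition $q^{*}q$, whence $q^{*}q = \tilde{p}^{*}\tilde{p}\in \CC$. The main obstacle is the previous step: one must check carefully that fullness really forces $\tilde{p}$ to have no through-block (this is where the hypothesis is genuinely used), and then keep track of the colours and the loops through the concatenation so as to be certain that what remains is exactly $q^{*}q$ and not merely a partition with the same block pattern. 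Once the no-through-block property is established, the loop-removal computation is routine.
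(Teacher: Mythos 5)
Your proof is correct and follows the same route as the paper: the paper's (one-line) argument is exactly to rotate $p$ so that the upper row consists precisely of the points of $q$, call the result $p'$, and observe that $q^{*}q = p'^{*}p'\in\CC$. You have merely made explicit the two points the paper leaves implicit, namely that fullness forces $p'$ to have no through-block and that the loop removal in $p'^{*}p'$ leaves exactly $q^{*}q$.
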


\begin{proof}
By definition, we can rotate $p$ so that the upper points are exactly the points of $q$. Denoting by $p'$ this partition, we get $q^{*}q = p^{\prime*}p'\in \CC$.
\end{proof}

Proposition \ref{prop:intervalblockstable} gives us a way of producing one-dimensional representations out of arbitrary partitions. We then need to be able to compare them with respect to the equivalence relation on projective partitions. If they have the same colouring, then this is easy.

\begin{lem}\label{lem:equivalence1d}
Let $p, q\in \CC(w, w)$ be projective partitions with $t(p) = t(q) = 0$. Then, $p\sim q$.
\end{lem}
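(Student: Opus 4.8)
The plan is to write down an explicit equivalence directly in terms of $p$ and $q$, without ever extracting the one-line partition $b$ from Lemma \ref{lem:nonthroughblockstructure}. Since $p, q\in \CC(w, w)$ share the colouring $w$, the lower row of $p$ matches the upper row of $q$, so the vertical concatenation $r := qp$ is defined and lies in $\CC$ by stability of categories of partitions under vertical concatenation. The whole proof then reduces to checking the two identities $r^{*}r = p$ and $rr^{*} = q$.

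The key structural fact, which I would isolate first, is that composing two projective partitions with no through-block simply \emph{erases the middle row}. Precisely, if $p_{1}, p_{2}\in \CC(w, w)$ both satisfy $t(p_{i}) = 0$, then in the concatenation $p_{2}p_{1}$ every middle point belongs to the loop set $L$: since $p_{1}$ has no through-block, none of its upper points reaches the middle row, and since $p_{2}$ has no through-block, none of its lower points reaches the middle row, so no middle point is connected to an upper point of $p_{1}$ or a lower point of $p_{2}$. After removing the resulting loops, $p_{2}p_{1}$ is the through-block-free partition whose upper row carries the block structure of the upper row of $p_{1}$ and whose lower row carries the block structure of the lower row of $p_{2}$. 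In particular $t(p_{2}p_{1}) = 0$, and such a composite is completely determined by the upper row of the top partition together with the lower row of the bottom one.

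Granting this, the verification is pure bookkeeping. Using $p^{*} = p$, $q^{*} = q$ and the projectivity relations $pp = p$, $qq = q$, one computes $r^{*} = (qp)^{*} = pq$ and hence $r^{*}r = pqqp = pqp$ and $rr^{*} = qppq = qpq$. Applying the structural fact to the triple stacks, the inner rows (carrying the block structure of $q$, respectively of $p$) loop away, so the surviving upper and lower rows of $pqp$ are those of the outer copies of $p$; since $p = p^{*}$ this is exactly $p$, and symmetrically $qpq = q$. Thus $r = qp$ implements $p\sim q$. The only real content is the loop-counting observation of the second paragraph, and that is precisely where the hypothesis $t(p) = t(q) = 0$ is essential: without the absence of through-blocks the middle rows would survive and the relation $pqp = p$ would fail in general.
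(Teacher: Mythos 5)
Your proof is correct and takes essentially the same route as the paper, which simply forms the vertical concatenation of $p$ and $q$ and observes that the absence of through-blocks makes it an equivalence; you have merely spelled out the loop-erasing verification that the paper leaves implicit.
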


\begin{proof}
Because $p$ and $q$ have the same colouring, the composition $r = pq$ makes sense and belongs to $\CC$. But since the partitions have no through-blocks, $r$ is an equivalence between $p$ and $q$.
\end{proof}

Let us end by recalling \cite[Lem 4.17]{freslon2013fusion} which characterizes the triviality of $u_{p}$ :

\begin{lem}
Let $N\geqslant 4$, let $\CC$ be a category of noncrossing partitions and let $p$ be a non-through-block projective partition. Then, $u_{p}$ is trivial if and only if the upper row of $p$ belongs to $\CC$.
\end{lem}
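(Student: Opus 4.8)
The plan is to reduce the triviality of $u_p$ to a statement about the equivalence relation on projective partitions, and then to unfold that statement combinatorially. By Theorem \ref{thm:partitionrepresentations} we have $u_p \sim u_q$ if and only if $p \sim q$; since the trivial representation $\varepsilon$ is precisely the representation $u_{\emptyset}$ attached to the empty partition (the only projective partition on the empty word, as one sees from Theorem \ref{thm:partitionrepresentations} applied to the empty word), triviality of $u_p$ is equivalent to $p \sim_{\CC} \emptyset$. By Lemma \ref{lem:nonthroughblockstructure}, the assumption $t(p) = 0$ gives $p = b^* b$, where $b$ is the upper row of $p$, a partition on one line with some word $w$, i.e. $b \in NC^{\A}(w, \emptyset)$. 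The whole statement thus reduces to showing that $p \sim_{\CC} \emptyset$ holds if and only if $b \in \CC$.

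For the ``if'' direction, assuming $b \in \CC$, I would exhibit $b$ itself as the implementing partition. Since $b \in \CC(w, \emptyset)$ we have $b^* b = p$ on one side, while on the other side $b b^* \in \CC(\emptyset, \emptyset)$; computing this vertical concatenation, every middle point lies in a loop and is removed, so $b b^*$ is the empty partition. Hence $r = b$ satisfies $r^* r = p$ and $r r^* = \emptyset$, which is exactly an equivalence $p \sim_{\CC} \emptyset$, and therefore $u_p$ is trivial.

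For the ``only if'' direction, suppose $p \sim_{\CC} \emptyset$, so that there is $r \in \CC$ with $r^* r = p$ and $r r^* = \emptyset$. The colour bookkeeping forces $r$ to lie on one line: as $r r^*$ is a partition on the empty word, $r$ has no lower points, and as $r^* r = p \in \CC(w, w)$ its upper word must be $w$, so $r \in \CC(w, \emptyset)$. It then remains to identify $r$ with $b$, for which I would use the elementary fact that a one-line partition is completely recovered from the projective partition it generates: for such partitions the block structure read off the upper row of $r^* r$ is exactly the block structure of $r$, so $r^* r = b^* b$ forces $r = b$ as coloured partitions. This yields $b = r \in \CC$, as wanted.

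The only genuinely non-formal step, and the one I expect to demand the most care, is this last combinatorial identification $r^* r = b^* b \Rightarrow r = b$, together with the word and colour bookkeeping that pins $r$ down to one line with word $w$ (and rules out any through-block in $r^* r$). Everything else is a direct application of Theorem \ref{thm:partitionrepresentations} and of the presentation $p = b^* b$ coming from Lemma \ref{lem:nonthroughblockstructure}; in particular the loop removal showing $b b^* = \emptyset$ is routine.
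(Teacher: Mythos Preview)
Your proof is correct. The paper itself does not prove this lemma but merely recalls it from \cite[Lem 4.17]{freslon2013fusion}, so there is no ``paper's own proof'' to compare against; your argument is the natural one and would be the expected proof. The identification $r^{*}r = b^{*}b \Rightarrow r = b$ that you flag as potentially delicate is in fact immediate once you observe (as you do) that for a one-line partition $r\in NC^{\A}(w,\emptyset)$ the upper row of $r^{*}r$ is literally $r$, so equality of $r^{*}r$ and $b^{*}b$ forces equality of their upper rows.
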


\subsubsection{Group-like and commutation relations}

Using what precedes, we can start giving some general constructions preserving the partition structure of compact quantum groups. By definition, a one-dimensional representation is a unitary element $t\in C(\G)$ such that $\D(t) = t\otimes t$. Such elements are often called \emph{group-like} and they form a group, denoted by $\mathcal{G}(\G)$. Before going further, let us summarize the descriptions of one-dimensional representations that we have so far :
\begin{itemize}
\item group-like elements, which are the same as one-dimensional representations,
\item non-through-block projective partitions, which are the same as partitions of the form $b^{*}b$ for a partition $b$ lying on one line.
\end{itemize}
Adding relations in $\mathcal{G}(\G)$ preserves the partition structure, where by adding relations we mean quotienting by the Hopf $*$-ideal generated by some relations (this is the same as taking the largest quotient which is a compact quantum group and satisfies the given relations). This is our first stability result.

\begin{prop}\label{prop:1dstability}
Let $N\geqslant 4$ be an integer, let $\CC$ be a category of noncrossing partitions and let $t$ be a group-like element in $C(\G_{N}(\CC))$. Then, the quotient of $C(\G_{N}(\CC))$ by the relation $t = 1$ is a noncrossing partition quantum group.
\end{prop}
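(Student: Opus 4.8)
The plan is to realise the quotient as the partition quantum group of a larger category obtained by adjoining a single one-line partition to $\CC$. Since $t$ is a group-like element, it is in particular a one-dimensional, hence irreducible, representation, so by Theorem \ref{thm:partitionrepresentations} it is equivalent to $u_{p}$ for some projective partition $p\in \CC(w, w)$; being one-dimensional, the equivalence forces equality $t = u_{p}$ as matrix coefficients. By Lemma \ref{lem:nonthroughblockstructure} we have $t(p) = 0$ and $p = b^{*}b$, where $b$ is the upper row of $p$, a noncrossing partition lying on one line with colouring $w$. I would then let $\CC'$ be the category of noncrossing partitions generated by $\CC$ and $b$: it is again noncrossing since the category operations preserve non-crossingness, and it contains $\CC$.

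First I would check that $\G_{N}(\CC')$ is a quotient of $\G_{N}(\CC)$ on which $t = 1$. The inclusion $\CC\subseteq \CC'$ adds intertwiners, hence adds relations, so by Theorem \ref{thm:tannakakrein} there is a surjection $C(\G_{N}(\CC))\twoheadrightarrow C(\G_{N}(\CC'))$ which is the identity on the generating coefficients. Moreover $b\in \CC'$ is exactly the upper row of $p$, so the triviality criterion recalled above (\cite[Lem 4.17]{freslon2013fusion}) shows that $u_{p}$ is the trivial representation of $\G_{N}(\CC')$. As $t = u_{p}$, the image of $t$ in $C(\G_{N}(\CC'))$ is $1$. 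Since the quotient by the relation $t = 1$ is by definition the largest quotient of $C(\G_{N}(\CC))$ on which $t = 1$, this yields a surjection $C(\G_{N}(\CC))/\langle t = 1\rangle \twoheadrightarrow C(\G_{N}(\CC'))$.

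For the reverse direction I would show that $C(\G_{N}(\CC))/\langle t = 1\rangle$ already contains all intertwiners coming from $\CC'$, and then invoke the universal property of $\G_{N}(\CC')$ to obtain a surjection the other way; the two surjections, both identities on generators, are mutually inverse and give the desired isomorphism, so the quotient is the noncrossing partition quantum group $\G_{N}(\CC')$. Concretely, as $p = b^{*}b$ has no through-block, $T_{p}$ is, up to a loop scalar, the rank-one projection onto the line $\C\xi$ spanned by the fixed vector $\xi$ determined by $b$; this line carries $u_{p}$, and imposing $t = 1$ makes $\xi$ invariant, i.e.~forces $b\in \Mor(\varepsilon, u^{\otimes w})$ in the quotient. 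The main obstacle is precisely this identification: one must verify that the Hopf $*$-ideal generated by $t - 1$ coincides with the ideal forcing $\xi$ to be fixed, so that the $\CC$-intertwiners together with the single new relation generate exactly the intertwiners of the generated category $\CC'$. Granting this, the universal description of $C(\G_{N}(\CC'))$ through the intertwiner relations of $\CC'$ closes the argument.
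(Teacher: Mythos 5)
Your argument follows essentially the same route as the paper's proof: identify $t$ with $u_{p}$ for a non-through-block projective partition $p=b^{*}b$ via Lemma \ref{lem:nonthroughblockstructure} (using that equivalent one-dimensional representations are equal), and observe that imposing $t=1$ amounts to adjoining the one-line partition $b$ to $\CC$. The two-sided surjection argument you sketch, including the identification of the Hopf $*$-ideal generated by $t-1$ with the relations making the fixed vector of $T_{b^{*}b}$ invariant, is a correct (and somewhat more explicit) unpacking of the step the paper states in one line.
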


\begin{proof}
By Lemma \ref{lem:nonthroughblockstructure}, there exists a non-through-block projective partition $p = b^{*}b$ such that $u_{p}$ is equivalent to $t$. Seen as group-like elements, two one-dimensional representations are equivalent if and only if they are equal. Thus, adding the relation $t = 1$ is the same as adding the relation $u_{p} = 1$, which in turn amounts to adding $b$ to the category of partitions.
\end{proof}

Since this kind of quotient will appear in every step of the classification, we give it a name.

\begin{de}\label{de:grouplike}
Let $\G$ and $\HH$ be compact quantum groups. We say that $C(\HH)$ is a \emph{quotient of $C(\G)$ by group-like relations} if it is obtained by making some group-like elements trivial. 
\end{de}

Here is another construction with one-dimensional representations involving commutation relations :

\begin{prop}\label{prop:commutationrelations}
Let $N\geqslant 4$ be an integer, let $\CC$ be a category of noncrossing partitions, let $t$ be a group-like element in $C(\G_{N}(\CC))$ and let $u$ be an irreducible representation of $\G_{N}(\CC)$. Then, the quotient of $C(\G_{N}(\CC))$ by the relations
\begin{equation*}
t u_{ij} = u_{ij}t
\end{equation*}
for all $1\leqslant i, j\leqslant \dim(u)$ is a noncrossing partition quantum group.
\end{prop}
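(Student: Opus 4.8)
The plan is to translate the commutation relations into a single intertwiner relation, to realise that intertwiner as a \emph{noncrossing} partition morphism $T_{r}$, and then to invoke Tannaka--Krein duality to identify the resulting quotient with $\G_{N}(\CC')$ for an explicit noncrossing category $\CC'$. First I would rewrite the relations representation-theoretically. Since $t$ is one-dimensional, both $t\otimes u$ and $u\otimes t$ act on $\C^{\dim(u)}$, and a direct computation under the canonical identification $\C\otimes \C^{\dim(u)} = \C^{\dim(u)}$ gives $(t\otimes u)_{ij} = tu_{ij}$ and $(u\otimes t)_{ij} = u_{ij}t$. Hence the relations $tu_{ij} = u_{ij}t$ hold in a quotient if and only if the identity map lies in $\Mor(t\otimes u, u\otimes t)$ there.

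Next I would bring in the partition descriptions. By Lemma \ref{lem:nonthroughblockstructure} we may write $t = u_{p}$ with $p = b^{*}b$ for a partition $b$ on one line coloured by a word $w$, and by Theorem \ref{thm:partitionrepresentations} together with irreducibility we may replace $u$ by an equivalent $u_{q}$ with $q\in \CC(z,z)$ projective (the equivalence only changes the relations by invertible linear combinations, hence generates the same ideal). Then $t\otimes u$ and $u\otimes t$ are subrepresentations of $u^{\otimes wz}$ and $u^{\otimes zw}$ respectively, whose inclusions and projections are already morphisms of $\G_{N}(\CC)$, and the morphism above becomes, under these inclusions, the flip $S\colon \xi\otimes \eta \mapsto \eta\otimes \xi$, where $\xi$ spans the one-dimensional space carrying $u_{p}$.

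The key computation is to produce a partition implementing $S$. I would set $r = (\id_{z}\otimes b^{*})(b\otimes \id_{z})\in NC^{\A}(wz, zw)$, obtained by capping the word $w$ on the upper-left by the block $b$, recreating it on the lower-right by $b^{*}$, and joining the two copies of $z$ by the identity. Being built from $b$, $b^{*}$ and identity partitions through the category operations, $r$ is automatically noncrossing, and since the relevant vertical concatenation produces no loops one gets $T_{r} = (\id\otimes T_{b^{*}})(T_{b}\otimes \id)$. A short check shows $T_{r} = \lambda S$ on the subspace carrying $t\otimes u$, with $\lambda$ equal to a power of $N$ (the number of blocks of $b$) and hence nonzero. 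Pre- and post-composing with the canonical inclusions and projections then yields $S = \iota'\circ(\text{flip})\circ \mathrm{pr}$ and, conversely, $\text{flip} = \lambda^{-1}\,\mathrm{pr}'\circ T_{r}\circ \iota$; since all these maps are morphisms of $\G_{N}(\CC)$, forcing $T_{r}$ to be an intertwiner is \emph{equivalent} to forcing the flip to be one, that is, to the commutation relations. I would finish with Theorem \ref{thm:tannakakrein}: the quotient forcing $T_{r}$ to intertwine $u^{\otimes wz}$ and $u^{\otimes zw}$ is exactly $\G_{N}(\CC')$ with $\CC'$ the noncrossing category generated by $\CC$ and $r$, so the quotient by the commutation relations is a noncrossing partition quantum group.

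The main obstacle is precisely the middle step: a priori a flip is a \emph{crossing} operation, so it is not clear that commutation with $u$ can be encoded by a noncrossing partition at all. What saves the argument, and what I expect to be the crux, is that $t$ arises from a partition $b$ living on a single line, so the swap of the $\xi$-leg past the $\eta$-leg can be realised by \emph{annihilating and recreating} $b$ through $b$ and $b^{*}$ rather than by genuinely crossing strings; this is exactly what keeps $r$ noncrossing. A secondary point requiring care is that the two Hopf $*$-ideals (commutation versus $T_{r}$-intertwiner) must be shown to coincide, not merely to contain one another, which is why I insist on the two-sided identities $S = \iota'\circ(\text{flip})\circ \mathrm{pr}$ and $\text{flip} = \lambda^{-1}\,\mathrm{pr}'\circ T_{r}\circ \iota$ with $\lambda\neq 0$.
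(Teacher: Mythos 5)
Your overall strategy coincides with the paper's: write $t=u_{b^{*}b}$ via Lemma \ref{lem:nonthroughblockstructure}, replace $u$ by $u_{q}$ for a projective $q\in\CC(z,z)$, encode the commutation by a noncrossing partition obtained by annihilating $b$ on one side and recreating it on the other, and conclude by Theorem \ref{thm:tannakakrein}. (The paper's partition is $b\otimes p\otimes\overline{b}^{*}\in\CC(wz\overline{w},z)$, encoding $tu_{ij}t^{-1}=u_{ij}$, while yours lives in $\CC(wz,zw)$ and encodes $tu_{ij}=u_{ij}t$ directly; this difference is cosmetic.) Your computations of $T_{r}=(\id\otimes T_{b^{*}})(T_{b}\otimes\id)$ and of its restriction to the carrier of $t\otimes u$ are correct. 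The gap is in the asserted \emph{equivalence} of the two sets of relations. Your $T_{r}$ does not factor through the carrier of $t\otimes u_{q}$: it is $\lambda$ times the flip on all of $\C\xi\otimes(\C^{N})^{\otimes z}$ (precomposed with the projection of the first leg onto $\C\xi$), and $(\C^{N})^{\otimes z}$ carries the full representation $u^{\otimes z}=\bigoplus_{q'}u_{q'}$ over \emph{all} projective $q'\in\CC(z,z)$, of which $u_{q}$ is only one summand. Hence ``$T_{r}$ is an intertwiner'' is equivalent to ``$t$ commutes with every coefficient of $u^{\otimes z}$'', which is a priori strictly stronger than the relations $tu_{ij}=u_{ij}t$ for the single irreducible $u\sim u_{q}$. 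Your identity $T_{r}=\lambda S$ holds only after restriction to that carrier, so it gives the implication ($T_{r}$ intertwines) $\Rightarrow$ (commutation with $u$), but not the converse; as written you have only shown that $\G_{N}(\CC')$ is a quotient of the quantum group in the statement, not that the two coincide.

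This is exactly the point the paper's proof is built to handle: by inserting the projective partition $p$ (rather than the identity block $\id_{z}$) into $r$, the map $T_{r}$ cuts down to $U_{p}=\lambda_{p}(\id\otimes T_{p})u^{\otimes z}(\id\otimes T_{p})$, which decomposes as $\bigoplus_{p'\preceq p}u_{p'}$; the extra relations imposed by $r$ are thereby identified as commutation with the $u_{p'}$ for $p'\preceq p$ only, and the induction using $b\otimes p'\otimes\overline{b}^{*}\in\CC'$ is what relates these to commutation with $u_{p}$ itself. To repair your argument you should at least replace $\id_{z}$ by $T_{q}$ in the definition of $r$, and then supply the missing step showing that, in any compact quantum group quotient satisfying $tu_{ij}=u_{ij}t$, the element $t$ also commutes with the coefficients of the remaining constituents $u_{q'}$, $q'\preceq q$, of $U_{q}$. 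Without some such argument the sentence ``forcing $T_{r}$ to be an intertwiner is equivalent to forcing the flip to be one'' is not justified, because the two maps do not determine each other on the subspaces where they differ.
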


\begin{proof}
Let $p$ be a projective partition such that $u_{p}$ is equivalent to $u$ and let $q$ be a projective partition such that $u_{q} = t$ as group-like elements (equivalent one-dimensional representations yield the same group-like element). By Lemma \ref{lem:nonthroughblockstructure}, $q = b^{*}b$ for some $b\in \CC(w, \emptyset)$ and we can consider the partition $r = b\otimes p\otimes \overline{b}^{*}$. The proof relies on the constructions of \cite{freslon2013representation} but we will make it as self-contained as possible.

If $p$ has $2n_{2}$ points, then there exists a scalar $\lambda_{p}\in \C$ such that $\lambda_{p}(\id\otimes T_{p})u^{\otimes n_{2}}(\id\otimes T_{p})$ is a representations which will be denoted by $U_{p}$. If moreover $b$ has $n_{1}$ points, then for $x_{2}\in (\C^{N})^{\otimes n_{2}}$ and $x_{1}, x_{3}\in (\C^{N})^{\otimes n_{1}}$, we have
\begin{align*}
u^{\otimes n_{2}}(\id\otimes T_{r})(1\otimes x_{1}\otimes x_{2}\otimes x_{3}) & = N^{-\mathrm{rl}(p, p)}T_{b}(x_{1})T_{\overline{b}^{*}}(x_{2})u^{\otimes n_{2}}(1\otimes T_{p}(x_{2})) \\
& = N^{-\mathrm{rl}(p, p)}\lambda_{p}^{-1}T_{b}(x_{1})T_{\overline{b}^{*}}(x_{2}) U_{p}(1\otimes x_{2})
\end{align*}
using the fact that $T_{p} = N^{-\mathrm{rl}(p, p)}T_{p}T_{p}$. On the other hand, if $b$ has $\ell$ blocks then
\begin{equation*}
(\id\otimes T_{b^{*}b})u^{\otimes n_{1}}(\id\otimes T_{b^{*}b}) = N^{2\ell}(u_{b^{*}b}\otimes 1) = N^{2\ell}(t\otimes 1),
\end{equation*}
so that
\begin{align*}
(\id\otimes T_{b})u^{\otimes n_{1}}(1\otimes x_{i}) & = N^{-\ell}(\id\otimes T_{b})(\id\otimes T_{b^{*}b})u^{\otimes n_{1}}(1\otimes x) \\
& = N^{-2\ell}(\id\otimes T_{b})(\id\otimes T_{b^{*}b})u^{\otimes n_{1}}(1\otimes T_{b^{*}b}(x)) \\
& = (\id\otimes T_{b})(t\otimes x) \\
& = T(b)(x)t.
\end{align*}
The same holds for $\overline{b}^{*}$ with $t$ replaced by $t^{*} = t^{-1}$, yielding
\begin{align*}
(\id\otimes T_{r})u^{\otimes (2n_{1}+n_{2})}(1\otimes x_{1}\otimes x_{2}\otimes x_{3}) & = N^{-\mathrm{rl}(p, p)}\lambda_{p}^{-1}(\id\otimes T_{b})(u^{\otimes n_{1}}(1\otimes x_{1}))U_{p}(1\otimes x_{2}) \\
& \times (\id\otimes T_{\overline{b}^{*}})(u^{\otimes n_{1}}(1\otimes x_{3})) \\
& = (t\otimes \id)U_{p}(t^{*}\otimes \id)(1\otimes x_{2}).
\end{align*}
Thus, $(t\otimes \id)U_{p}(t^{*}\otimes \id) = U_{p}$. Now, it follows from \cite[Rem 4.3]{freslon2013representation} and \cite[Prop 3.7]{freslon2013fusion} that $U_{p}$ splits as the direct sum of all the representations $u_{p'}$ for $p'\leqslant p$. Since by concatenating we have $b\otimes p'\otimes \overline{b}^{*}\in \CC$, a straightforward induction then shows that $(t\otimes \id)u_{p'}(t^{*}\otimes \id) = u_{p'}$ for all $p'\leqslant p$, hence in particular for $p' = p$. This means that adding $r$ to the category of partitions is the same as adding the relations $t (u_{p})_{ij} = (u_{p})_{ij}t$ for all $1\leqslant i, j\leqslant \dim(u_{p})$.

Consider now a unitary matrix $V\in M_{\dim(u)}(\C.1)\subset M_{\dim(u)}(C(\G))$ such that $Vu_{p}V^{*} = u$. If $T = t.\Id$, then the commutation relation of the previous paragraph can be written as $Tu_{p} = u_{p}T$. Since $T$ commutes with $M_{\dim(u)}(\C.1)$, $Tv = VT$ so that $Tu = uT$, which in turn translates into the commutation relation in the statement.
\end{proof}

\subsubsection{Twisted amalgamation}

One can also use one-dimensional representations to "twist" an amalgamated free product. To explain this, let us first recall the usual construction of free products : given two compact quantum groups $\G_{1}$ and $\G_{2}$, the free product C*-algebra $C(\G_{1})\ast C(\G_{2})$ can be turned into a compact quantum group called their \emph{free product} and denoted by $\G_{1}\ast \G_{2}$. If the quantum groups come from partitions, then so does their free product, and the associated category of partitions can be explicitly described.

\begin{de}
Let $\CC_{1}$ and $\CC_{2}$ be two categories of partitions coloured by $\A_{1}$ and $\A_{2}$ respectively. Their \emph{free product} is the category of partitions $\CC = \CC_{1}\ast\CC_{2}$ generated in $NC^{\A_{1}\sqcup \A_{2}}$ by $\CC_{1}$ and $\CC_{2}$.
\end{de}

According to \cite[Prop 4.12]{freslon2014partition} $\G_{N}(\CC_{1})\ast\G_{N}(\CC_{2}) = C(\G_{N}(\CC_{1}\ast\CC_{2}))$ which justifies the terminology. Assume now that $\G_{1}$ and $\G_{2}$ are two compact quantum groups with a \emph{common dual quantum subgroup} $\HH$ in the sense that there are injective $*$-homomorphisms $i_{1} : C(\HH)\rightarrow C(\G_{1})$ and $i_{2} : C(\HH)\rightarrow C(\G_{2})$ preserving the coproducts. The object $\HH$ can be interpreted as the dual of a common discrete quantum subgroup of the duals of $\G_{1}$ and $\G_{2}$, hence the name. The quotient of $C(\G_{1})\ast C(\G_{2})$ by the ideal generated by elements of the form $i_{1}(x)-i_{2}(x)$ is a new C*-algebra called an \emph{amalgamated free product} and denoted by $C(\G_{1})\ast_{C(\HH)}C(\G_{2})$. It was proved by S. Wang in \cite{wang1995free} that there is a natural compact quantum group structure on this C*-algebra. The resulting object is called the \emph{free product of $\G_{1}$ and $\G_{2}$ amalgamated over $\HH$} and denoted by $\G_{1}\ast_{\HH}\G_{2}$. If now $t$ is a group-like element, we may conjugate one of the copies of $\HH$ by it before identifying it with the other copy, yielding the following more general notion of amalgamation :

\begin{de}\label{de:twistedamalgamtion}
Let $\G_{1}$ and $\G_{2}$ be compact quantum groups, let $\HH$ be a common dual quantum subgroup and let $K$ be a set of group-like elements in $C(\G_{1}\ast\G_{2})$. The \emph{amalgamated free product over $\HH$ twisted by $K$} is the quantum group obtained by quotienting $C(\G_{1}\ast\G_{2})$ by the relations
\begin{equation*}
t i_{1}(x)t^{-1} = i_{2}(x)
\end{equation*}
for all $x\in C(\HH)$ and all $t\in K$.
\end{de}

As for group-like relations, "quotienting by relations" here means taking the largest quotient compact quantum group whose associated C*-algebra satisfies these relations. This construction is interesting in our setting due to the following :

\begin{prop}\label{prop:amalgamatedstability}
Let $N\geqslant 4$ be an integer, let $\CC_{1}$ and $\CC_{2}$ be two categories of noncrossing partitions and let $\HH$ be a common dual quantum subgroup (which is not assumed to be a partition quantum group) of $\G_{N}(\CC_{1})$ and $\G_{N}(\CC_{2})$. Then, for any set of group-like elements $K$, the amalgamated free product over $\HH$ twisted by $K$ is a partition quantum group.
\end{prop}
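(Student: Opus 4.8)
The plan is to exhibit the twisted amalgamated free product as $\G_{N}(\CC')$ for an explicit category of noncrossing partitions $\CC'$ containing $\CC_{1}\ast\CC_{2}$, reusing the computation carried out in the proof of Proposition \ref{prop:commutationrelations}. First I would record that by \cite[Prop 4.12]{freslon2014partition} the free product $\G_{N}(\CC_{1})\ast\G_{N}(\CC_{2})$ is already the partition quantum group $\G_{N}(\CC_{1}\ast\CC_{2})$, so that everything takes place inside a category of partitions on the colour set $\A_{1}\sqcup\A_{2}$. Next I would translate the defining relations of Definition \ref{de:twistedamalgamtion} into representation-theoretic language: since $i_{1}$ and $i_{2}$ are coproduct-preserving $*$-homomorphisms and each $t\in K$ is group-like, imposing $ti_{1}(x)t^{-1}=i_{2}(x)$ for all $x\in C(\HH)$ amounts to declaring the representations $t\otimes i_{1}(\alpha)\otimes t^{-1}$ and $i_{2}(\alpha)$ equal for every $\alpha\in\Irr(\HH)$; moreover, because $i_{1},i_{2}$ are tensor functors and the $t$ are group-like, it suffices to impose this for $\alpha$ ranging over a fusion-generating subset of $\Irr(\HH)$, the remaining relations following from the category operations.

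The second step is the partition realisation. As $\HH$ is a common dual quantum subgroup, its representation category embeds as a full tensor subcategory of each $\Rep(\G_{N}(\CC_{j}))$, so $i_{1}(\alpha)$ and $i_{2}(\alpha)$ are irreducible representations of $\G_{N}(\CC_{1})$ and $\G_{N}(\CC_{2})$ respectively. By Theorem \ref{thm:partitionrepresentations} they are equivalent to $u_{p_{\alpha}}$ and $u_{q_{\alpha}}$ for projective partitions $p_{\alpha}\in\CC_{1}$ and $q_{\alpha}\in\CC_{2}$, and by Lemma \ref{lem:nonthroughblockstructure} we may write $t=u_{b^{*}b}$ for some $b$ lying on one line in $\CC_{1}\ast\CC_{2}$. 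The relation above then becomes the single equivalence $u_{q_{\alpha}}\sim t\otimes u_{p_{\alpha}}\otimes t^{-1}$ in the free product. At this point I would run exactly the computation of the proof of Proposition \ref{prop:commutationrelations}: the representation $t\otimes u_{p_{\alpha}}\otimes t^{-1}$ is the subrepresentation of a tensor power attached to the partition $r_{\alpha}=b\otimes p_{\alpha}\otimes\overline{b}^{*}$, so that the desired equivalence is implemented by an explicit equivalence partition built from $b$, $p_{\alpha}$ and $q_{\alpha}$.

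I would then set $\CC'$ to be the category of noncrossing partitions generated by $\CC_{1}\ast\CC_{2}$ together with all these equivalence partitions, for $\alpha$ in the chosen generating set and $t\in K$. By construction $\G_{N}(\CC')$ is a noncrossing partition quantum group and a quotient of the free product, since its intertwiner spaces contain those of $\CC_{1}\ast\CC_{2}$; and the added partitions force precisely the equivalences $u_{q_{\alpha}}\sim t\otimes u_{p_{\alpha}}\otimes t^{-1}$, hence the relations of Definition \ref{de:twistedamalgamtion}. To finish, I would invoke Theorem \ref{thm:tannakakrein}: quotienting $C(\G_{N}(\CC_{1}\ast\CC_{2}))$ by the Hopf $*$-ideal generated by the twisting relations yields the largest quotient in which these identification morphisms are intertwiners, and its category of intertwiners is the tensor category generated by the partitions of $\CC_{1}\ast\CC_{2}$ and those morphisms, which is exactly $\{T_{p}:p\in\CC'\}$. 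This identifies $\G_{N}(\CC')$ with the twisted amalgamated free product.

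The main obstacle I anticipate is this last identification: one must check that imposing the abstract equivalences $u_{q_{\alpha}}\sim t\otimes u_{p_{\alpha}}\otimes t^{-1}$ through genuine partitions reproduces exactly the amalgamation ideal, neither more nor fewer relations. Concretely this requires matching the intertwiner furnished by the equivalence partition with the identification morphism coming from the fixed unitaries realising $i_{j}(\alpha)\cong u_{p_{\alpha}},u_{q_{\alpha}}$ (in the spirit of the final paragraph of the proof of Proposition \ref{prop:commutationrelations}, where a scalar unitary is used to pass between $u_{p}$ and $u$), and it relies on the compatibility guaranteed by the tensor-functoriality of $i_{1}$ and $i_{2}$, which ensures that the equivalences for a fusion-generating set of $\Irr(\HH)$ already generate all the others under the category operations, so that no spurious relations are introduced.
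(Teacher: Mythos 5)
Your overall strategy---reduce to irreducible representations of $\HH$, realise $i_{1}(\alpha)$ and $i_{2}(\alpha)$ by projective partitions $p_{\alpha}\in\CC_{1}$ and $q_{\alpha}\in\CC_{2}$, write $t=u_{b^{*}b}$, and then enlarge $\CC_{1}\ast\CC_{2}$ by a partition of the form $b\otimes r\otimes \overline{b}^{*}$ forcing the relation, exactly as in the proof of Proposition \ref{prop:commutationrelations}---is the one the paper uses. But there is a genuine gap at the crucial step: you assert that ``the desired equivalence is implemented by an explicit equivalence partition built from $b$, $p_{\alpha}$ and $q_{\alpha}$'' without constructing it. This is not routine, because $p_{\alpha}$ and $q_{\alpha}$ are coloured by the disjoint alphabets $\A_{1}$ and $\A_{2}$: an equivalence between them would be a partition $r$ with $r^{*}r=p_{\alpha}$ and $rr^{*}=q_{\alpha}$, and no vertical composition of anything in $\CC_{1}$ with anything in $\CC_{2}$ makes sense, since the boundary colourings never match. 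The paper resolves this with the through-block decomposition of \cite[Prop 2.9]{freslon2013representation}: cut $p_{\alpha}$ in the middle, keeping its upper row together with $t(p_{\alpha})$ new lower points coloured by a fixed auxiliary colour $x$; cut $q_{\alpha}$ keeping its lower row together with $t(q_{\alpha})$ new upper points coloured by the same $x$; since $\dim i_{1}(\alpha)=\dim i_{2}(\alpha)$ forces $t(p_{\alpha})=t(q_{\alpha})$ by \cite[Prop 2.16]{freslon2013representation}, the two halves compose to the required $r$. Without this construction (or an equivalent one) your category $\CC'$ is simply not defined, so the argument does not close.

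A secondary remark: your proposed reduction to a fusion-generating subset of $\Irr(\HH)$ is unnecessary---the paper treats all irreducibles, which suffices because their coefficients span a dense subalgebra of $C(\HH)$---and if you insist on it you owe an argument that the relations for the remaining irreducibles follow from the added partitions under the category operations. The rest of your proof, namely that the added partitions generate exactly the amalgamation ideal and that Theorem \ref{thm:tannakakrein} identifies the resulting quotient, is in line with what the paper does.
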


\begin{proof}
First note that because the coefficients of irreducible representations generate a dense subalgebra of $C(\HH)$, it is enough to prove that, for any irreducible representation $\alpha$ of $\HH$ and any $t\in K$, we may add to $\CC_{1}\ast\CC_{2}$ partitions realizing the relations
\begin{equation*}
t i_{1}(u^{\alpha}_{ij})t^{-1} = i_{2}(u^{\alpha}_{ij})
\end{equation*}
for all $1\leqslant i, j\leqslant \dim(\alpha)$ can be obtained by adding partitions. So let $\alpha$ be an irreducible representation of $\HH$, let $p_{1}$ be a projective partition in $\CC_{1}$ such that $u_{p_{1}}$ is equivalent $u' = (i_{1}\otimes \ii)(u^{\alpha})$ and let $p_{2}$ be a projective partition in $\CC_{2}$ such that $u_{p_{2}}$ is equivalent to $u'' = (i_{2}\otimes \ii)(u^{\alpha})$. Let also $q = b^{*}b$ be a non-through-block projective partition such that $u_{q} = t$ as group-like element. The colours of $p_{1}$ and $p_{2}$ do not match, but we can still build a partition out of them using the following construction, which is a particular case of the through-block decomposition of \cite[Prop 2.9]{freslon2013representation} :
\begin{itemize}
\item we cut $p_{1}$ in the middle, keeping its upper row together with $t(p_{1})$ lower points which we colour by some fixed element $x\in\A$, and denote the resulting partition by $p_{1}'$.
\item we cut $p_{2}$ in the middle, keeping its lower row together with $t(p_{2})$ upper points which we also colour by $x$, and denote the resulting partition by $p_{2}'$.
\item because $\dim(u') = \dim(u'')$, $t(p_{1}) = t(p_{2})$ by \cite[Prop 2.16]{freslon2013representation} so that the composition $p_{2}'p_{1}'$ makes sense. We denote it by $r$.
\end{itemize}
The same argument as in the proof of Proposition \ref{prop:commutationrelations} then shows that quotienting by the relations $t u'_{ij}t^{-1} = u''_{ij}$ for all $1\leqslant i, j\leqslant \dim(u)$ is the same as adding the partition $s=b\otimes r\otimes b^{*}$ to the category of partitions. Because the relations $t u'_{ij}t^{-1} = u''_{ij}$ can also be written $t i_{1}(u_{ij})t^{-1} = i_{2}(u_{ij})$, doing this for all irreducible representations of $\HH$ and all $t\in K$ we get the twisted amalgamated free product.
\end{proof}

Note that the fusion rules of these compact quantum groups can be computed using the results of \cite{freslon2013representation}. This is in sharp contrast with the fact that there is no general method to compute the representation theory of an amalgamated free product of compact quantum groups.

\begin{rem}
If two elements $t_{1}$ and $t_{2}$ satisfy the twisting relation above, then $t_{1}^{-1}t_{2}$ commutes with $i_{1}(C(\HH))$. This means that the group-like elements satisfying the twisted amalgamation relation form a coset with respect to the subgroup of group-like elements commuting with $i_{1}(C(\HH))$. In the sequel, we will directly consider the amalgamated free product twisted by a coset.
\end{rem}

\subsection{Free wreath products of pairs}\label{subsec:wreathpairs}

Starting from free quantum groups and applying the constructions above repeatedly yields a large class of noncrossing partition quantum groups, but not all of them. That is the reason why we now introduce another class by slightly extending the free wreath product construction. Recall from \cite{lemeux2013fusion} that given a group $\Gamma$ and a symmetric generating set $S\subset \Gamma$, one can consider the category $\CC_{\Gamma, S}$ of all $S$-coloured partitions such that in each block, the product of the upper colouring equals the product of the lower colouring as elements of $\Gamma$. This is a category of partitions and the associated compact quantum group is the \emph{free wreath product} $\widehat{\Gamma}\wr_{\ast}S_{N}^{+}$ as defined by J. Bichon in \cite{bichon2004free} (it does not depend on the choice of $S$). Note that by definition, $\CC_{\Gamma, S}$ contains the partition $\pi(w, w)$ for any word $w$ on $S$.

Now let $\gamma\in \Gamma$ and choose $g_{1}, \dots, g_{n}\in S$ such that $g_{1}\dots g_{n} = \gamma$. If we add to $\CC_{\Gamma, S}$ the partition
\begin{equation*}
\beta_{\gamma} = \beta(g_{1}\dots g_{n}, g_{1}\dots g_{n}),
\end{equation*}
we produce a new compact quantum group with a non-trivial one-dimensional representation. If $\gamma = g_{1}\cdots g_{n}$ and $\gamma' = g'_{1}\cdots g'_{n}$, then
\begin{equation*}
\pi(g_{1}\cdots g_{n}g'_{1}\cdots g'_{n}, g_{1}\cdots g_{n}g'_{1}\cdots g'_{n})(\beta_{\gamma}\otimes\beta_{\gamma'})\pi(g_{1}\cdots g_{n}g'_{1}\cdots g'_{n}, g_{1}\cdots g_{n}g'_{1}\cdots g'_{n}) = \beta_{\gamma\gamma'}.
\end{equation*}
Since moreover $\overline{\beta}_{\gamma} = \beta_{\gamma^{-1}}$, adding $\beta_{\gamma}$ is the same as adding $\beta_{\lambda}$ for all $\lambda$ in the subgroup of $\Gamma$ generated by $\gamma$, so that our construction only depends on the choice of a subgroup of $\Gamma$. This leads to the following definition :

\begin{de}
Let $\Lambda\subset \Gamma$ be a subgroup. We denote by $\CC_{\Gamma, \Lambda, S}$ the category of partitions generated by $\CC_{\Gamma, S}$ and $\beta_{\lambda}$ for all $\lambda\in \Lambda$. The associated compact quantum group will be denoted by $H_{N}^{+}(\Gamma, \Lambda)$ and called the \emph{free wreath product of the pair $(\Gamma, \Lambda)$}.
\end{de}

\begin{rem}
If $\lambda = h_{1}\cdots h_{\ell}$ is another decomposition of $\lambda$ with respect to the generating set $S$, then because $\pi(g_{1}\cdots g_{n}, h_{1}\cdots h_{\ell})\in \CC_{\Gamma, S}$ by definition, adding $\beta(g_{1}\cdots g_{n}, g_{1}\cdots g_{n})$ is equivalent to adding $\beta(h_{1}\cdots h_{\ell}, h_{1}\cdots h_{\ell})$. This means that our definition only depends on $\Lambda$ and not on the choice of a particular decomposition of each of its elements.
\end{rem}

The quantum groups $H_{N}^{+}(\Gamma, \Lambda)$ will be our main tool for the classification of two-coloured noncrossing partition quantum groups in the hyperoctahedral and symmetric cases. This is because of the next statement, which is the first illustration of the classification technique explained in the introduction.

\begin{prop}\label{prop:classificationwreath}
Let $N\geqslant 4$ be an integer, let $\Gamma$ be a discrete group with a symmetric generating set $S$, let $\Lambda\subset \Gamma$ be a subgroup and let $\CC$ be a category of $S$-coloured noncrossing partitions containing $\CC_{\Gamma, \Lambda, S}$. Then, $\G_{N}(\CC)$ is a free wreath product of a pair.
\end{prop}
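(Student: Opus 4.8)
The plan is to produce explicitly the pair $(\Gamma', \Lambda')$ for which $\G_{N}(\CC) = H_{N}^{+}(\Gamma', \Lambda')$, and to establish this by proving the equality of categories $\CC = \CC_{\Gamma', \Lambda', S}$. For a word $w$ on $S$, write $\underline{w}$ for the image in $\Gamma$ of the product of its letters. I would first let $N$ be the normal subgroup of $\Gamma$ generated by all $\underline{w}$ such that the single-block partition $\pi(w, \emptyset)$ lying on one line belongs to $\CC$, and set $\Gamma' = \Gamma/N$ with generating set the image of $S$. I would then let $\Lambda'$ be the subgroup of $\Gamma'$ generated by the images of $\underline{w}$ and $\underline{w'}$ over all one-block/one-block partitions $\beta(w, w')$ lying in $\CC$. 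Since the generators $\beta_{\lambda}$ of $\CC_{\Gamma, \Lambda, S}$ lie in $\CC$ and the defining blocks of $\CC_{\Gamma, S}$ do too, one gets $\CC_{\Gamma', \Lambda', S} \subseteq \CC$ essentially by construction: each generator of the left-hand category is visibly in $\CC$, the extra caps $\pi(w, \emptyset)$ being precisely those imposing the relations that define $N$. The whole content is therefore the reverse inclusion $\CC \subseteq \CC_{\Gamma', \Lambda', S}$.

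For this reverse inclusion I would argue through projective partitions and Theorem \ref{thm:partitionrepresentations}. By Remark \ref{rem:noncrossingdecomposition}, every projective partition of $\CC$ is a horizontal concatenation of single-through-block projective partitions and of non-through-block projective partitions with connected endpoints, so it suffices to place each of these two elementary pieces inside $\CC_{\Gamma', \Lambda', S}$. The non-through-block pieces are governed by one-dimensional representations: by Lemma \ref{lem:nonthroughblockstructure} such a piece is $b^{*}b$ for some $b$ on one line, and its blocks, isolated as full subpartitions via Proposition \ref{prop:intervalblockstable} and rotated onto one line, produce caps $\pi(w, \emptyset) \in \CC$; by the very definition of $N$ the corresponding products die in $\Gamma'$, which is exactly the condition for $b^{*}b$ to lie in $\CC_{\Gamma', S}$. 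The single-through-block pieces carry a well-defined element of $\Gamma$, namely the common product along the through-block, and I would check that, modulo $N$, this element together with the surrounding non-through-block decorations is realised by a concatenation of blocks of $\CC_{\Gamma', S}$ and of partitions $\beta_{\lambda}$ with $\lambda \in \Lambda'$.

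Once every projective partition of $\CC$ is known to lie in $\CC' := \CC_{\Gamma', \Lambda', S}$, I would invoke the general principle described in the introduction. Any $p \in \CC \setminus \CC'$, rotated entirely onto one line, becomes equivalent to a non-through-block projective partition, i.e. to the triviality relation of a one-dimensional representation of $\G_{N}(\CC')$. By the definitions of $\Gamma'$ and $\Lambda'$ all such group-like relations, and likewise all equivalences between projective partitions that $\CC$ could implement beyond those of $\CC'$, have already been incorporated into $\CC'$; hence no proper group-like quotient survives and there can be no such $p$. Since $N \geqslant 4$, the maps $T_{p}$ attached to distinct noncrossing partitions are linearly independent, so this forces $\CC = \CC'$ and therefore $\G_{N}(\CC) = \G_{N}(\CC') = H_{N}^{+}(\Gamma', \Lambda')$.

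I expect the main obstacle to be the reverse inclusion, and within it the single-through-block pieces: one must show that the block-product condition for $\Gamma'$ genuinely holds for every partition of $\CC$ and that no through-block decoration forces a group-like element outside $\Lambda'$. The delicate point is the isolation step — reading off a clean group element from a block of an arbitrary $p \in \CC$ requires capping the full subpartitions nested between consecutive points of that block, which is only legitimate once one knows that the relevant caps already sit in $\CC$, equivalently that their boundary products lie in $N$. Making this extraction uniform, rather than relation by relation, is where the combinatorics of the noncrossing decomposition and the order relation $\preceq$ on projective partitions must be used with care.
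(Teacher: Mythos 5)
Your global architecture is the same as the paper's: quotient $\Gamma$ by the normal subgroup generated by the products of one-block partitions lying on one line in $\CC$, extract a subgroup $\Lambda'$ from the $\beta(w,w')$'s in $\CC$, show every projective partition of $\CC$ already lies in $\CC_{\Gamma',\Lambda',S}$, and absorb the remaining partitions as group-like relations (the paper's Steps 1--3, with your $(\Gamma',\Lambda')$ agreeing with its $(\widetilde{\Gamma},\widetilde{\Lambda})$). However, the decisive step --- placing the non-through-block projective pieces inside the smaller category --- rests on a false claim. You assert that the blocks of such a piece $b^{*}b$, isolated as full subpartitions and rotated onto one line, ``produce caps $\pi(w,\emptyset)\in\CC$'', so that their products die in $\Gamma'$ and $b^{*}b\in\CC_{\Gamma',S}$. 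Proposition \ref{prop:intervalblockstable} only yields $q^{*}q\in\CC$ for a full subpartition $q$; it does \emph{not} yield $q\in\CC$. Having $q\in\CC(w,\emptyset)$ is the much stronger statement that the one-dimensional representation $u_{q^{*}q}$ is trivial, which is exactly the condition entering the definition of your normal subgroup, and it fails in general: already for $\CC=\CC_{\Gamma,\Lambda,S}$ with $\Lambda\neq\{e\}$, the generator $\beta_{\lambda}=\beta(w,w)$ is a non-through-block projective partition whose block product is $\lambda\neq e$ in $\Gamma'=\Gamma$, while $\pi(w,\emptyset)\notin\CC$. If your claim were correct, $\Lambda'$ would always collapse into the normal subgroup and every such $\G_{N}(\CC)$ would be an ordinary free wreath product $\widehat{\Gamma'}\wr_{\ast}S_{N}^{+}$ --- precisely the situation the proposition is designed to go beyond. (A secondary issue: a block of $b$ need not be a full subpartition, since other blocks may be nested between its legs, so even the isolation step first requires the reduction of \cite[Lem 4.2]{freslon2013fusion}.)

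What is actually true, and what the paper proves, is that each non-through-block piece lies in $\CC_{\Gamma,\Lambda',S}$ where $\Lambda'$ is \emph{enlarged} to contain the relevant block products: the one-block case $q^{*}q=\beta_{\gamma}$ contributes a new generator $\gamma$ of $\Lambda'$, and the general case is an induction on the number of blocks using $\beta(w,w)=\pi(w,w)(q^{*}q)\pi(w,w)\in\CC$ together with the auxiliary partition $r=\beta(w,w)\left[\pi(h_{1},h_{1})\otimes b_{1}^{*}b_{1}\otimes\cdots\otimes\pi(h_{\ell},h_{\ell})\right]$, which satisfies $r^{*}r=p$ once the nested $b_{i}^{*}b_{i}$ are handled by the induction hypothesis. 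Your treatment of the single-through-block pieces is only the sentence ``I would check that\dots''; the paper carries out the analogous induction with $\pi(w,w)$ in place of $\beta(w,w)$. Your closing argument (any remaining partition, rotated onto one line, reduces to some $\beta(w,\emptyset)$ whose product lies in the normal subgroup, hence is already in $\CC_{\Gamma',\Lambda',S}$) is sound in outline and matches the paper's Steps 2 and 3, but it is only available once the projective-partition step has been correctly established.
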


\begin{proof}
We need to show that there exists a group $\widetilde{\Gamma}$ and a subgroup $\widetilde{\Lambda}\subset\widetilde{\Gamma}$ such that $\CC = \CC_{\widetilde{\Gamma}, \widetilde{\Lambda}}$. We will split the proof into three steps.\\

\noindent\textbf{Step 1.} We will first prove that for any projective partition $p\in \CC$, there exists $\Lambda\subset \Lambda'$ such that $p\in \CC_{\Gamma, \Lambda', S}\subset \CC$. Let us start with the case of a non-through-block projective partition of the form $p = q^{*}q$ for some $q\in \CC(w, \emptyset)$ such that the endpoints (the first and the last one) of $q$ are connected. We will prove the result by induction on the number of blocks of $q$.
\begin{itemize}
\item If $q$ has one block, assume it is coloured by $g_{1}, \cdots, g_{n}$ and let $\gamma = g_{1}\cdots g_{n}\in \Gamma$, then $q^{*}q = \beta_{\gamma}$ so that the result holds for the subgroup $\Lambda'$ generated by $\Lambda$ and $\gamma$.
\item Consider now a general $q$ and let $w$ be its colouring. The structure of $q$ is as follows : we have a block containing the endpoints and coloured by, say, $h_{1}, \cdots, h_{\ell}$. Between the points coloured with $h_{i}$ and $h_{i+1}$ we have a block $b_{i}$ (which may be empty). Note that $\beta(w, w) = \pi(w, w)(q^{*}q)\pi(w, w)\in \CC$ and let us set
\begin{equation*}
r = \beta(w, w)\left[\pi(h_{1}, h_{1})\otimes b_{1}^{*}b_{1}\otimes\pi(h_{2}, h_{2})\otimes\cdots\otimes b_{\ell-1}^{*}b_{\ell-1}\otimes \pi(h_{\ell}, h_{\ell})\right]
\end{equation*}
By the straightforward generalization of \cite[Lem 4.3]{freslon2013fusion}, $b_{i}^{*}b_{i}\in \CC$ for all $i$. Thus, by induction we can find $\Lambda\subset \Lambda'$ such that $\beta(w, w)\in \CC_{\Gamma, \Lambda', S}\subset\CC$ and $b_{i}^{*}b_{i}\in \CC_{\Gamma, \Lambda', S}\subset \CC$ for all $i$. Then, $r\in \CC_{\Gamma, \Lambda', S}$ and $p = q^{*}q = r^{*}r \in \CC_{\Gamma, \Lambda', S}$ and the proof is complete.
\end{itemize}

Next we consider a one-block partition $\pi(h_{1}\cdots h_{\ell}, h_{1}\cdots h_{\ell})$ with non-through-block partitions $b_{1}, \cdots, b_{\ell-1}$ between the points. If $p$ denotes the whole partition and $w$ is its upper colouring, we know by \cite[Lem 4.2]{freslon2013fusion} and \textbf{Step 1.} that $b_{i}^{*}b_{i}\in \CC_{\Gamma, \Lambda', S}$ for all $1\leqslant i\leqslant \ell-1$. Moreover,
\begin{equation*}
r = \pi(w, w)\left[\pi(h_{1}, h_{1})\otimes b_{1}^{*}b_{1}\otimes\pi(h_{2}, h_{2})\otimes\cdots\otimes b_{\ell-1}^{*}b_{\ell-1}\otimes \pi(h_{\ell}, h_{\ell})\right]\in \CC_{\Gamma, \Lambda', S}
\end{equation*}
satisfies $r^{*}r = p$. Thus, $p\in \CC_{\Gamma, \Lambda', S}$.

Eventually, as mentioned in Remark \ref{rem:noncrossingdecomposition}, any projective partition can be obtained by horizontal concatenation of ones with exactly one through-block and non-through-block ones with endpoints connected. Thus, we have proved that there exists $\Lambda\subset \Lambda'$ such that $\CC_{\Gamma, \Lambda', S}\subset \CC$ and all projective partitions of $\CC$ lie in $\CC_{\Gamma, \Lambda', S}$.\\

\noindent\textbf{Step 2.} Let us fix now $\Lambda'$ as above. Let $q\in \CC$ be a partition lying on one line and let us denote by $\CC'$ the category of partitions generated by $\CC_{\Gamma, \Lambda', S}$ and $q$. If the colouring of $q$ is $g_{1}, \cdots, g_{n}$, we claim that $\CC'$ coincides with the category of partitions $\CC''$ generated by $\CC_{\Gamma, \Lambda', S}$ and $\beta(g_{1} \cdots g_{n}, \emptyset)$. The inclusion $\CC''\subset \CC'$ follows from
\begin{equation*}
\beta(g_{1}\cdots g_{n}, \emptyset) = q\pi(g_{1}\cdots g_{n}, g_{1}\cdots g_{n})\in \CC'.
\end{equation*}
Conversely, $q^{*}q$ is projective and hence belongs to $\CC_{\Gamma, \Lambda', S}$ by \textbf{Step 1.}, thus
\begin{equation*}
q = \beta(g_{1}\cdots g_{n}, \emptyset)(q^{*}q)\in \CC''.
\end{equation*}
It follows from this that $\CC$ is the category of partitions generated by $\CC_{\Gamma, \Lambda', S}$ and $\beta(g_{1}\cdots g_{n}, \emptyset)$ for all $g_{1}, \cdots, g_{n}$'s corresponding to colourings of partitions of $\CC$ once rotated on one line.\\

\noindent\textbf{Step 3.} The last step is to show that $\CC$ is indeed equal to the category of partitions of a free wreath product of a pair. To do this, let us denote by $\Theta$ the set of all elements $\gamma\in \Gamma$ such that there exists $g_{1}, \cdots, g_{n}\in S$ satisfying $\gamma = g_{1}\cdots g_{n}$ and $\beta(g_{1}\cdots g_{n}, \emptyset)\in \CC$. It follows from the following observations :
\begin{itemize}
\item $[\beta(w, \emptyset)\otimes\beta(w', \emptyset)]\pi(w.w', w.w') = \beta(w.w', \emptyset)$,
\item $\overline{\beta(g_{1}\cdots g_{n}, \emptyset)}^{*} = \beta(g_{n}^{-1}\cdots g_{1}^{-1}, \emptyset)$,
\item $\beta(g_{1}\cdots g_{n}, \emptyset)^{*}\beta(g_{1}\cdots g_{n}, \emptyset) = \beta(g_{1}\cdots g_{n}, g_{1}\cdots g_{n})$
\end{itemize}
that $\Theta$ is a subgroup of $\Lambda$. Moreover, if $\gamma\in \Theta$ can be represented by a word $w$ and $\gamma'\in \Gamma$ can be represented by the word $w'$, then rotating $\pi(w', w')\otimes\beta(w, \emptyset)$ on one line we can get a partition with colouring $w'.w.w^{\prime -1}$. Concatenating with $\pi(w'.w.w^{\prime -1}, w'.w.w^{\prime -1})$ then shows that $\beta(w'.w.w^{\prime -1}, \emptyset)\in \CC$. In other words, $\Theta$ is a normal subgroup of $\Gamma$ (hence also of $\Lambda$). Let us denote by $\widetilde{\Gamma}$ and $\widetilde{\Lambda}$ the quotients of $\Gamma$ and $\Lambda$ by $\Theta$. Identifying the elements of $S$ with their images in the quotient, we can see $\CC_{\widetilde{\Gamma}, \widetilde{\Lambda}, S}$ as a category of $S$-coloured partitions and it contains $\CC_{\Gamma, \Lambda, S}$ as well as $\beta(g_{1}\cdots g_{n}, \emptyset)$ as soon as $g_{1}\cdots g_{n}\in \Theta$, i.e.~$\CC\subset\CC_{\widetilde{\Gamma}, \widetilde{\Lambda}, S}$. Conversely, let $\pi(g_{1}\cdots g_{n}, g'_{1}\cdots g'_{m})\in \CC_{\widetilde{\Gamma}, \widetilde{\Lambda}, S}$. This means that $g_{1}\cdots g_{n} = g'_{1}\cdots g'_{m}$ in $\widetilde{\Gamma}$, which in turn translates into the existence of an element $\gamma'' = h_{1}\cdots h_{\ell}\in \Theta$ such that $g_{1}\cdots g_{n} = g'_{1}\cdots g'_{m}h_{1}\cdots h_{\ell}$ in $\Gamma$. At the level of partitions, we then have that
\begin{equation*}
\pi(g_{1}\cdots g_{n}, g'_{1}\cdots g'_{m}) = [\pi(g'_{1}\cdots g'_{m}, g'_{1}\cdots g'_{m})\otimes\beta(h_{1}\cdots h_{\ell}, \emptyset)]\pi(g_{1}\cdots g_{n}, g'_{1}\cdots g'_{m}h_{1}\cdots h_{\ell})\in \CC.
\end{equation*}
A similar argument shows that $\beta(g_{1}\cdots g_{n}, g_{1}\cdots g_{n})\in \CC$ as soon as $g_{1}\cdots g_{n}\in \widetilde{\Lambda}$, completing the proof.
\end{proof}

\section{Classification I : pair partitions}\label{sec:orthogonal}

We will now start the classification of noncrossing partition quantum groups on a set of two colours $\A = \{x, y\}$. If $x^{-1} = y$, then we are considering the \emph{unitary easy quantum groups} which have been completely classified by P. Tarrago and M. Weber in \cite{tarrago2015unitary}. We can therefore assume that $x^{-1} = x$ and $y^{-1} = y$. We will consider a category of $\A$-coloured noncrossing partitions $\CC$. Note that if $\CC$ contains the partition $\pi(x, y)$, then by definition the identity map on $\C^{N}$ is an equivalence between $u^{x}$ and $u^{y}$. In other words, they are equal and the quantum group $\G_{N}(\CC)$ is an orthogonal easy quantum group, we therefore exclude that case. Since the founding work \cite{banica2009liberation}, it is known that classifications of partition structures naturally split into four cases depending on the sizes of the blocks and we will follow this distinction hereafter.

The first case is when all the blocks of the partitions in $\CC$ have size two, such partitions being called \emph{pair partitions}. The classification then relies on a two-coloured version of the free orthogonal quantum group which we now introduce. Recall that $PO_{N}^{+}$ is the so-called \emph{projective version} (see for instance \cite[Sec 3]{banica2010invariants}) of $O_{N}^{+}$, i.e.~$C(PO_{N}^{+})$ is the subalgebra of $C(O_{N}^{+})$ generated by all the elements of the form $u_{ij}u_{kl}$ for $1\leqslant i, j, k, l\leqslant N$.

\begin{de}
The compact quantum group $O_{N}^{++}$ is defined as the amalgamated free product
\begin{equation*}
O_{N}^{++} = O_{N}^{+}\underset{PO_{N}^{+}}{\ast} O_{N}^{+}.
\end{equation*}
\end{de}

According to Proposition \ref{prop:amalgamatedstability}, $O_{N}^{++}$ is a noncrossing partition quantum group and we can give a simple generator of its category of partitions. For two (not necessarily different) colours $a, b\in \A$, we denote by $D_{ab}\in NC(ab, \emptyset)$ the partition $\sqcup$ with colours $a$ and $b$.

\begin{lem}
The category of partitions of $O_{N}^{++}$ is the category $\CC_{O^{++}}$ generated by $D_{xy}^{*}D_{xy}$.
\end{lem}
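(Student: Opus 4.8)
The plan is to combine the amalgamated free product description of $O_N^{++}$ with the machinery of Proposition \ref{prop:amalgamatedstability}, and then to identify the single partition produced by the amalgamation with $D_{xy}^{*}D_{xy}$. First I would record the category of the free product $O_N^{+}\ast O_N^{+}$. By \cite[Prop 4.12]{freslon2014partition} it is the free product category $\CC_{O^{+}}^{x}\ast\CC_{O^{+}}^{y}$, where $\CC_{O^{+}}^{a}$ denotes the category of $O_N^{+}$ carried by the colour $a$. Since the colour $x$ is its own inverse, rotating the $x$-identity $\idpart\in NC(x,x)$ produces the pair partition $D_{xx}$, so $\CC_{O^{+}}^{x}$ is generated by $D_{xx}$ (and likewise $\CC_{O^{+}}^{y}$ by $D_{yy}$); in particular $D_{xx}$ and $D_{yy}$ lie in \emph{every} category of $\{x,y\}$-coloured partitions. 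Thus $O_N^{+}\ast O_N^{+} = \G_{N}(\CC_{0})$ with $\CC_{0} = \langle D_{xx}, D_{yy}\rangle$ the category of monochromatic noncrossing pair partitions.

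Next I would pass to the amalgamation. The inclusions $C(PO_N^{+})\hookrightarrow C(O_N^{+})$ into the two factors make $PO_N^{+}$ a common dual quantum subgroup, and by its very definition $C(PO_N^{+})$ is generated as a C*-algebra by the entries of $u\otimes u$. Consequently, in the proof of Proposition \ref{prop:amalgamatedstability} applied with trivial twisting set $K=\{1\}$, it is enough to realise the amalgamation relation for the single (reducible) corepresentation $u\otimes u$ rather than for each irreducible of $PO_N^{+}$: the resulting relation $i_{1}((u\otimes u)_{IJ}) = i_{2}((u\otimes u)_{IJ})$ is exactly $u^{x}\otimes u^{x} = u^{y}\otimes u^{y}$. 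At the level of partitions this amounts to declaring that $\mathrm{id}_{(\C^{N})^{\otimes 2}} = T_{s}$ is an intertwiner between $u^{x}\otimes u^{x}$ and $u^{y}\otimes u^{y}$, i.e.~to adding the two-coloured ``identity'' partition $s\in NC(xx,yy)$ consisting of two parallel through-strings. Hence $\CC_{O^{++}} = \langle D_{xx}, D_{yy}, s\rangle$.

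Finally I would identify $s$ with $D_{xy}^{*}D_{xy}$. A direct computation of the vertical concatenation gives $D_{xy}^{*}D_{xy} = \beta(xy,xy)$, the partition on four points split into two bichromatic blocks of type $\{x,y\}$. The partition $s$ is also the \emph{unique} noncrossing pairing of four points into two bichromatic pairs, and differs from $\beta(xy,xy)$ only in how these four points are distributed between the two rows; it is therefore a rotated version of $\beta(xy,xy)$. Since categories are stable under rotation this yields $\langle s\rangle = \langle D_{xy}^{*}D_{xy}\rangle$, and because $D_{xx}$ and $D_{yy}$ are automatic we conclude $\CC_{O^{++}} = \langle D_{xx}, D_{yy}, s\rangle = \langle D_{xy}^{*}D_{xy}\rangle$, as claimed. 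The main obstacle is the middle step: one must check both that the amalgamation collapses to a single partition (using that $C(PO_N^{+})$ is generated by $u\otimes u$) and that the partition it produces is genuinely $s$, a rotation of $\beta(xy,xy)$ with \emph{two} blocks, rather than the single-block through-string $\pi(x,y)$ — the latter would identify $u^{x}$ with $u^{y}$ and collapse $O_N^{++}$ to an orthogonal easy quantum group, which is precisely the case excluded at the start of the section.
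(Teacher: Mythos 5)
Your argument is correct and follows essentially the same route as the paper: realise $O_N^{++}$ via Proposition \ref{prop:amalgamatedstability} with $K=\{1\}$, note that amalgamating over $PO_N^{+}$ reduces to identifying the coefficients of $u^{x}\otimes u^{x}$ with those of $u^{y}\otimes u^{y}$ because these generate $C(PO_N^{+})$, and recognise the resulting partition $\pi(x,y)\otimes\pi(x,y)$ (your $s\in NC(xx,yy)$) as a rotated version of $D_{xy}^{*}D_{xy}=\beta(xy,xy)$. The only cosmetic difference is that the paper passes through \cite[Ex 5.10]{freslon2013representation} and the through-block cutting of the projective partitions $\pi(x,x)\otimes\pi(x,x)$ and $\pi(y,y)\otimes\pi(y,y)$, whereas you work directly with the reducible representation $u\otimes u$ and the identity intertwiner, which amounts to the same thing.
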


\begin{proof}
If $x$ and $y$ are the colours corresponding to the two copies of $O_{N}^{+}$, then the two copies of $PO_{N}^{+}$ that we have to identify are generated by the coefficients of $u^{xx}$ and $u^{yy}$ respectively. By the results of \cite[Ex 5.10]{freslon2013representation}, they correspond to the partitions $\pi(x, x)\otimes\pi(x, x)$ and $\pi(y, y)\otimes\pi(y, y)$. We now use Proposition Proposition \ref{prop:amalgamatedstability} with $K = \{1\}$, so that $b = \emptyset$. We therefore have to add $s = r = \pi(x, y)\otimes\pi(x, y)$. Moreover, once the generators are identified, the whole C*-algebras that they generate are identified. Thus, $O_{N}^{++}$ is the partition quantum group associated to the category of partitions $\CC_{O^{++}}$ generated by $\pi(x, y)\otimes\pi(x, y)$, which is a rotated version of $D_{xy}^{*}D_{xy}$.
\end{proof}

Note that it is known since \cite{banica2009liberation} that pair partitions are preserved under all the category operations, hence $\CC_{O^{++}}$ is a category of pair partitions. The compact quantum group $O_{N}^{++}$ is not free since $u^{x}\otimes u^{y}$ contains the non-trivial one-dimensional representation associated to the projective partition $D_{xy}^{*}D_{xy}$ which is non-trivial since $D_{xy}\notin\CC_{O^{++}}$ (this is just a rotated version of $\pi(x, y)$, which has been excluded). Let us give another description of this representation.

\begin{lem}\label{lem:cyclic1d}
Let $N\geqslant 4$ be an integer. For $1\leqslant k\leqslant N$, set
\begin{equation*}
s = \sum_{m=1}^{N}u_{km}^{x}u_{km}^{y}\in C(O_{N}^{++}).
\end{equation*}
Then, $s$ is a group-like element which does not depend on $k$. Moreover, it generates the group of group-like elements $\mathcal{G}(O_{N}^{++})$ (which is therefore cyclic).
\end{lem}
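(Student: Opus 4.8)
The plan is to first identify $s$ with the one-dimensional representation attached to the generating projective partition, and then to run the correspondence between group-like elements and non-through-block projective partitions to show that every group-like element is a power of $s$.

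For the first two assertions, set $p = D_{xy}^{*}D_{xy}$ and recall that $T_{p}\in \Mor(u^{x}\otimes u^{y}, u^{x}\otimes u^{y})$ by Theorem \ref{thm:tannakakrein}. Since $T_{p}(e_{i}\otimes e_{j}) = \delta_{ij}\sum_{m}e_{m}\otimes e_{m}$, its image is the line spanned by $\xi = \sum_{m}e_{m}\otimes e_{m}$ and $N^{-1}T_{p}$ is the orthogonal projection onto $\C\xi$. Hence $\C\xi$ is a one-dimensional subrepresentation of the unitary representation $u^{x}\otimes u^{y}$, and the associated coaction reads $\alpha(\xi) = \xi\otimes t$ for a single group-like element $t$ (the restriction of a unitary representation to an invariant subspace is again unitary, and one-dimensional unitary representations are exactly group-like elements). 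Writing out $\alpha(\xi) = \sum_{i,j}(e_{i}\otimes e_{j})\otimes\big(\sum_{m}u^{x}_{im}u^{y}_{jm}\big)$ and comparing with $\xi\otimes t = \sum_{i}(e_{i}\otimes e_{i})\otimes t$ forces $\sum_{m}u^{x}_{im}u^{y}_{im} = t$ for every $i$ and $\sum_{m}u^{x}_{im}u^{y}_{jm} = 0$ for $i\neq j$. The first identity shows at once that $s=t$ is group-like and that the defining sum is independent of $k$, settling the first two claims; as a by-product $\D(s)=s\otimes s$ can also be checked directly from the vanishing of the off-diagonal sums.

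For the last assertion, I would use Lemma \ref{lem:nonthroughblockstructure} and Theorem \ref{thm:partitionrepresentations}: every element of $\mathcal{G}(O_{N}^{++})$ is of the form $u_{b^{*}b}$ for a noncrossing pairing $b$ lying on one line with $b^{*}b\in \CC_{O^{++}}$, the group law being horizontal concatenation and the inverse conjugation. To each such $b$, whose arcs join positions $i<j$ of colours $(w_{i},w_{j})$, I attach the integer $I(b) = \#\{xy\text{-arcs}\} - \#\{yx\text{-arcs}\}$, i.e. the sum of $\epsilon(w_{i},w_{j})$ where $\epsilon(x,y)=1$, $\epsilon(y,x)=-1$ and $\epsilon(x,x)=\epsilon(y,y)=0$. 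This quantity is additive under horizontal concatenation, changes sign under conjugation, and satisfies $I(D_{xy})=1$, so $s$ carries $I$-value $1$. The heart of the matter is the identity $u_{b^{*}b} = s^{I(b)}$, which I would prove by induction on the number of arcs of $b$: choosing an innermost (hence adjacent) arc at positions $i,i+1$, one removes it to obtain $b'$ and shows $u_{b^{*}b} = s^{\epsilon(w_{i},w_{i+1})}u_{b'^{*}b'}$. A monochromatic arc ($\epsilon=0$) can be contracted because $D_{xx},D_{yy}\in \CC_{O^{++}}$, so it contributes the trivial representation (using the characterisation of trivial $u_{p}$ by its upper row belonging to $\CC$), while a bichromatic arc is peeled off using the generator $\beta(xy,xy)=D_{xy}^{*}D_{xy}$ and contributes $s^{\pm1}$. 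Granting this, every group-like element is a power of $s$, whence $\mathcal{G}(O_{N}^{++})$ is cyclic and generated by $s$.

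The main obstacle is the bichromatic peeling step, and more precisely the need to control $\CC_{O^{++}}$ well enough to justify it: one must verify that removing an adjacent $xy$-arc keeps the partition inside $\CC_{O^{++}}$ and multiplies the group-like element by exactly $s$, which amounts to describing which coloured noncrossing pairings genuinely lie in $\CC_{O^{++}}$. Note that for a fixed colouring not every pairing is admissible: the two pairings of $xyxy$ cannot both occur, since by Lemma \ref{lem:equivalence1d} they would then be equivalent and define the same group-like element while carrying different values of $I$. I would resolve this with the rotation and contraction techniques already used in the proof of Proposition \ref{prop:classificationwreath} and in Proposition \ref{prop:intervalblockstable}, by building an explicit equivalence $r\in \CC_{O^{++}}$ between $b^{*}b$ and $\beta(xy,xy)\otimes b'^{*}b'$ (and checking that $b'^{*}b'$ itself stays in the category). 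Once $I$ is seen to be well defined on $\mathcal{G}(O_{N}^{++})$, it becomes an injective homomorphism onto a subgroup of $\Z$ containing $1$, which even identifies $\mathcal{G}(O_{N}^{++})$ with $\Z$; but for the statement only the generation by $s$ is required.
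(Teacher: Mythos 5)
Your first two assertions are proved correctly and by essentially the same computation as the paper: the paper reads the relations $\delta_{kl}\sum_{m}u^{x}_{mi}u^{y}_{mj}=\delta_{ij}\sum_{m}u^{x}_{km}u^{y}_{lm}$ directly off the fact that $T_{D_{xy}^{*}D_{xy}}$ is an intertwiner, and your reformulation via the projection onto $\C\xi$ is the same argument. The generation statement, however, contains a genuine gap: the invariant $I(b)$ is not well defined on group-like elements and the identity $u_{b^{*}b}=s^{I(b)}$ is false. Take $b=\{\{1,4\},\{2,3\}\}$ with colouring $xyxy$, the nested pairing whose outer arc is an $xy$-arc and whose inner arc is a $yx$-arc, so that $I(b)=0$. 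Its endpoints are connected, so $b^{*}b\in\CC_{O^{++}}$ (this is exactly the content of Lemma \ref{lem:non-through-blockorthogonal}, and can also be checked by a direct rotation using that $\pi(x,y)\otimes\pi(x,y)$ is a rotated version of the generator). By Lemma \ref{lem:equivalence1d} it is then equivalent to $(D_{xy}^{*}D_{xy})^{\otimes 2}$, which has the same colouring, so $u_{b^{*}b}=s^{2}$; your identity would therefore force $s^{2}=1$, contradicting your own final claim that $\mathcal{G}(O_{N}^{++})\cong\Z$ (and emptying Theorem \ref{thm:biorthogonalclassification} of content). In particular the assertion that ``the two pairings of $xyxy$ cannot both occur'' is false --- both occur and they are equivalent --- and the argument you give for it is circular, since it presupposes that $I$ separates group-like elements, which is what is to be proved. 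The reason the peeling step breaks is that $D_{xy}$ and $D_{yx}$ do \emph{not} belong to $\CC_{O^{++}}$, so a bichromatic innermost arc cannot be contracted; only monochromatic neighbouring pairs can be removed, using $D_{xx}$ and $D_{yy}$.

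The correct invariant is not the signed count of arcs but the image of the colouring word of $b$ in $\Z_{2}\ast\Z_{2}$, i.e.\ the integer $\ell$ such that the word reduces to $(xy)^{\pm\ell}$ after cancelling adjacent equal letters; it depends only on the colouring and not on the arc structure. This is precisely how the paper argues: by Lemma \ref{lem:equivalence1d} two non-through-block projective partitions of $\CC$ with the same colouring are automatically equivalent, so after contracting monochromatic neighbouring pairs one may assume the colouring is alternating, hence equal to that of $(D_{xy}^{*}D_{xy})^{\otimes\ell}$ or of its conjugate, and $t=s^{\pm\ell}$ follows with no induction on arcs at all. Replacing your index by this word-reduction invariant collapses your strategy onto the paper's one-line argument.
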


\begin{proof}
This is a standard computation. The fact that $T_{D_{xy}^{*}D_{xy}}\in \Mor(u^{x}\otimes u^{y}, u^{x}\otimes u^{y})$ is equivalent to the following relation between the generators : for any $1\leqslant i, j, k, l \leqslant N$,
\begin{equation*}
\delta_{kl}\sum_{m=1}^{N}u_{mi}^{x}u_{mj}^{y} = \delta_{ij}\sum_{m=1}^{N}u_{km}^{x}u_{lm}^{y}.
\end{equation*}
In particular, the sum does not depend on the choice of $i, j, k, l$ as soon as $i=j$ and $k=l$ and is then equal to $s$. A straightforward computation then yields $\D(s) = s\otimes s$ so that $s$ is a group-like element. Since $s$ is contained as a representation in $u^{x}\otimes u^{y}$, it is equal to $u_{D_{xy}^{*}D_{xy}}$.

Let now $t$ be a group-like element and let $p\in \CC_{O^{++}}$ be a non-through-block projective partition such that $u_{p} = t$. By concatenating any two neighbouring points with the same colour with $D_{xx}$ or $D_{yy}$, we see that $p$ is equivalent to a projective partition $p'$ in which the colours alternate in each row. Since $p'$ is alternating and of even size, it has the same colouring as $(D_{xy}^{*}D_{xy})^{\otimes \ell}$ or $(D_{yx}^{*}D_{yx})^{\otimes \ell} = \overline{(D_{xy}^{*}D_{xy})}^{\otimes \ell}$ for some $\ell\in \N$. Thus Lemma \ref{lem:equivalence1d} ensures that $t = u_{p} = u_{p'} = s^{\pm\ell}$, concluding the proof.
\end{proof}

We first prove a separate lemma to make the proof more clear. We are thankful to the referee for suggesting this simplification of the proof.

\begin{lem}\label{lem:non-through-blockorthogonal}
Let $q$ be a pair partition lying on one line. Then, for any $k\geqslant 1$ and any $a_{1}, \cdots, a_{k}, b_{1}, \cdots, b_{k}\in \{x, y\}$,
\begin{equation*}
q\otimes\pi(a_{1}, b_{1})\otimes\cdots\otimes\pi(a_{k}, b_{k})\otimes\pi(a_{k}, b_{k})\otimes\cdots\otimes\pi(a_{1}, b_{1})\otimes q^{*}\in \CC_{O^{++}}.
\end{equation*}
In particular, $q^{*}q\in \CC_{O^{++}}$.
\end{lem}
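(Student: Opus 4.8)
The plan is to induct on the number of pairs of $q$, using throughout that $\CC_{O^{++}}$ is stable under all category operations — in particular under rotation, so that it suffices to prove that \emph{some} rotation of the partition at hand lies in $\CC_{O^{++}}$. The mirror symmetry between the left part $q$ and the right part $q^{*}$, and between the two halves of the through-string block, is essential: it guarantees that every ``defect'' produced during the reduction comes in a symmetric pair, which is exactly what the generator $D_{xy}^{*}D_{xy}$ can absorb. I first settle the base case $q = \emptyset$ and then explain how to peel off one arc of $q$ at a time. The ``in particular'' will be a short specialisation at the end.

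\textbf{Base case.} When $q = \emptyset$ the partition is $R = \pi(a_{1}, b_{1})\otimes\cdots\otimes\pi(a_{k}, b_{k})\otimes\pi(a_{k}, b_{k})\otimes\cdots\otimes\pi(a_{1}, b_{1})$, which consists only of through-strings; hence $T_{R} = \id$ and, by Theorem \ref{thm:tannakakrein}, the membership $R\in\CC_{O^{++}}$ is equivalent to the representation equality $u^{\otimes w} = u^{\otimes w'}$, where $w = a_{1}\cdots a_{k}a_{k}\cdots a_{1}$ and $w' = b_{1}\cdots b_{k}b_{k}\cdots b_{1}$ are the two (palindromic) colourings of $R$. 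Since $\pi(x, y)\otimes\pi(x, y)$ is a rotated version of $D_{xy}^{*}D_{xy}$ it lies in $\CC_{O^{++}}$, and its associated map is the identity, which says precisely that $u^{\otimes xx} = u^{\otimes yy}$. Tensoring this relation with identities on both sides lets one replace any two adjacent equal letters of a colour word by the two opposite ones. I would then check that, under these moves, every palindrome of even length $2k$ can be transformed into $x^{2k}$: the positions carrying a $y$ come in pairs $\{i, 2k+1-i\}$ of opposite parity, and each such pair can be brought together and cancelled. Applying this to $w$ and $w'$ gives $u^{\otimes w} = u^{\otimes x^{2k}} = u^{\otimes w'}$, hence $R\in\CC_{O^{++}}$.

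\textbf{Inductive step.} Assume the statement for all pairings with fewer than $m$ pairs, and let $q$ have $m$ pairs. Pick an innermost arc of $q$ (two adjacent points, of colours $c$ and $d$) and rotate the whole partition so that its two legs sit at the boundary. If $c = d$, then $D_{cc}$ and $D_{cc}^{*}$ already lie in $\CC_{O^{++}}$, so composing with them removes this arc together with its mirror cup in $q^{*}$, reducing the claim to the instance attached to $q$ with the arc deleted, which holds by induction. If $c\neq d$ the arc is bichromatic and cannot be treated in isolation, since an isolated bichromatic cap is \emph{not} in $\CC_{O^{++}}$ (it is a rotation of the excluded $\pi(x,y)$). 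This is the heart of the proof, and it is where the mirror symmetry is used: the bichromatic cap coming from $q$ and the bichromatic cup coming from $q^{*}$ must be converted \emph{together}, by a rotation, into two through-strings which are then absorbed into the block $A$. What is left is, up to rotation, the instance of the strengthened statement attached to $q$ with the arc deleted and to an \emph{enlarged} block $A$, and it is again in $\CC_{O^{++}}$ by induction. The fact that the enlarged block stays mirror-symmetric is what makes the base case applicable, and is the reason the statement is phrased with the symmetric block $A\otimes\tilde{A}$ rather than a single arc; keeping accurate track of the colours and of the planarity during this conversion is the main technical obstacle.

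Finally, the ``in particular'' follows by specialising the main statement to $k = 1$ and $a_{1} = b_{1} = x$, which gives $q\otimes\pi(x, x)\otimes\pi(x, x)\otimes q^{*}\in\CC_{O^{++}}$, and then capping the two middle through-strings with $D_{xx}$ above and $D_{xx}^{*}$ below. Both of these partitions lie in $\CC_{O^{++}}$, so the result is again in $\CC_{O^{++}}$; the four middle points form a single closed loop which only contributes a nonzero scalar, and what remains is exactly $q^{*}q$. Hence $q^{*}q\in\CC_{O^{++}}$.
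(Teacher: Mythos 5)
The crux of your argument --- the inductive step for a bichromatic arc --- is not actually proved, and the mechanism you propose would fail. You pick an \emph{innermost} arc of $q$, sitting at interior positions $\{j, j+1\}$ of the upper row, and assert that it and its mirror cup in $q^{*}$ can be ``converted together, by a rotation, into two through-strings which are then absorbed into the block $A$''. But rotation only acts on the \emph{extreme} left or right point of a row; to bring an interior arc of $q$ to the boundary you would first have to rotate the points $1, \dots, j-1$ of $q$ down to the lower row, which destroys the shape $q'\otimes M'\otimes q'^{*}$, and the resulting through-string would land at the far left of the partition rather than adjacent to the middle block, so the outcome is not an instance of the lemma. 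You yourself flag ``keeping accurate track of the colours and of the planarity'' as the main obstacle --- that bookkeeping \emph{is} the content of the step, and it cannot be carried out for an innermost arc. The repair (and the paper's route) is to peel the \emph{outermost} arc, i.e.\ the block containing the two endpoints of $q$ (so one must either assume the endpoints of $q$ are connected, as the paper does when applying the lemma, or first split off tensor factors): its left leg is the extreme left point of the upper row and the mirror leg is the extreme right point of the lower row, so a single left rotation and a single right rotation turn the pair into two strings $\pi(c, d)$ flanking the middle block, which is exactly the statement for the interior of $q$ with $k+1$ middle strings. Your $c = d$ case is fine (though phrased in the ``removal'' direction, whereas the induction needs the ``insertion'' direction, available since $D_{cc}, D_{cc}^{*}\in\CC_{O^{++}}$), as is the ``in particular'' deduction.

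Your base case, by contrast, is essentially correct and takes a mildly different route from the paper: you characterise membership of the all-through-strings partition by the equality $u^{\otimes w} = u^{\otimes w'}$ (note this detour needs $N\geqslant 4$ for the linear independence of the $T_{p}$, which is harmless since $\CC_{O^{++}}$ does not depend on $N$) and reduce to the fact that every even-length palindrome can be turned into $x^{2k}$ by flipping adjacent equal pairs; the paper instead recolours $\pi(b,a)^{\otimes 2k}$ by vertical concatenation with $\pi(x,y)\otimes\pi(x,y)$, which hides the same combinatorial fact. That fact is true, but ``each such pair can be brought together and cancelled'' is not a proof --- the moves cannot transport a $y$ past an $x$ (e.g.\ $xyxy$ admits no move at all). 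Argue instead by recursion: reduce the inner palindrome $a_{2}\cdots a_{k}a_{k}\cdots a_{2}$ first, then eliminate $y x^{2m} y$ via $y x x x^{2m-3} y \to y y y x^{2m-3} y \to x x y x^{2m-3} y$ and induction on $m$.
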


\begin{proof}
We proceed by induction on the number of blocks of $q$. If $q$ has one block, then $q = D_{ab}$ for some $a, b\in \CC_{O^{++}}$. By definition, $D_{ab}^{*}D_{ab}\in \CC_{O^{++}}$ and rotating it gives $p_{1} = D_{ab}\otimes\pi(b, a)\in \CC_{O^{++}}$ and $p_{2} = \pi(b, a)\otimes D_{ab}^{*}\in\CC_{O^{++}}$. Thus,
\begin{equation*}
D_{ab}\otimes \pi(b, a)^{\otimes (2k)}\otimes D_{ab}^{*} = p_{1}\otimes \pi(b, a)^{\otimes (2k-2)}\otimes p_{2} \in \CC_{O^{++}}
\end{equation*}
and performing vertical concatenations with $\pi(x, y)\otimes\pi(x, y)$ or $\pi(y, x)\otimes\pi(y, x)$ we get the result for $q = D_{ab}$. If now $q$ has more than one block, its endpoints are connected by assumption and form the block $D_{dc}$. If $q'$ is the complement of this block in $q$, then rotating $q\otimes\pi(a_{1}, b_{1})\otimes\cdots\otimes\pi(a_{k}, b_{k})\otimes\pi(a_{k}, b_{k})\otimes\cdots\otimes\pi(a_{1}, b_{1})\otimes q^{*}$ yields
\begin{equation*}
q'\otimes\pi(c, d)\otimes\pi(a_{1}, b_{1})\otimes\cdots\otimes\pi(a_{k}, b_{k})\otimes\pi(a_{k}, b_{k})\otimes\cdots\otimes\pi(a_{1}, b_{1})\otimes\pi(c, d)\otimes q^{\prime*}
\end{equation*}
and the result follows by induction.
\end{proof}

\begin{thm}\label{thm:biorthogonalclassification}
Let $N\geqslant 4$ be an integer and let $\CC$ be a category of noncrossing partitions which is not a free product and such that all blocks have size two. Then, there is an integer $k$ such that $C(\G_{N}(\CC))$ is the quotient of $C(O_{N}^{++})$ by group-like relations (i.e.~$s^{k}=1$).
\end{thm}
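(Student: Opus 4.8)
The plan is to follow the two-step strategy from the introduction: first show that $\CC_{O^{++}}\subseteq\CC$ and that $\CC_{O^{++}}$ already contains every projective partition of $\CC$, and then read off the remaining generators of $\CC$ as group-like relations, using the cyclicity of $\mathcal{G}(O_N^{++})$ established in Lemma \ref{lem:cyclic1d}.

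First I would unwind the hypothesis ``not a free product''. Since both identities $\pi(x,x)$ and $\pi(y,y)$ lie in $\CC$, rotating them shows $D_{xx},D_{yy}\in\CC$, so the category $\langle D_{xx},D_{yy}\rangle$ of monochromatic pair partitions (the category of $O_N^{+}\ast O_N^{+}$) is always contained in $\CC$. If no block of $\CC$ contained both an $x$ and a $y$ point, then every block would be monochromatic and $\CC$ would equal this free product; hence the hypothesis forces a partition $p\in\CC$ with a \emph{mixed} block $\{i,j\}$, one endpoint coloured $x$ and the other $y$. Among all mixed blocks I would choose one with $j-i$ minimal. By the noncrossing condition every block lying strictly between $i$ and $j$ is full, and by minimality it is monochromatic; rotating $p$ so that $\{i,\dots,j\}$ sits on the upper line yields a full subpartition $q$ whose endpoints form the mixed block and whose interior is monochromatic. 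Proposition \ref{prop:intervalblockstable} then gives $q^{*}q\in\CC$.

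Next I would strip the interior of $q$ by induction on its number of blocks. An innermost interior block is an interval $\{l,l+1\}$ of some colour $c$, and since $D_{cc}\in\CC$ one may cap it off by composing $q^{*}q$ with $\pi\otimes D_{cc}\otimes\pi$ above and its adjoint below; after deleting the resulting loop this produces $q'^{*}q'\in\CC$, where $q'$ is $q$ with that interval removed, still a mixed-endpoint pair partition with one fewer block. The base case $q=D_{xy}$ gives precisely $q^{*}q=D_{xy}^{*}D_{xy}\in\CC$, so $\CC_{O^{++}}=\langle D_{xy}^{*}D_{xy}\rangle\subseteq\CC$ and $\G_{N}(\CC)$ is a quotient of $O_{N}^{++}$. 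For the other inclusion I would invoke Remark \ref{rem:noncrossingdecomposition}: any projective $p\in\CC$ is a horizontal concatenation of one-through-block pieces and non-through-block pieces with connected endpoints. The latter are exactly the partitions $q^{*}q$ covered by Lemma \ref{lem:non-through-blockorthogonal}, hence lie in $\CC_{O^{++}}$; a one-through-block pair piece is an identity strand $\pi(a,a)$ flanked (by the noncrossing condition) by such non-through-block pieces, so it too lies in $\CC_{O^{++}}$. Since $\CC_{O^{++}}$ is stable under horizontal concatenation, every projective partition of $\CC$ belongs to $\CC_{O^{++}}$.

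Finally I would turn this into a group-like relation. Any generator of $\CC$ over $\CC_{O^{++}}$ can be rotated onto one line to a partition $b\in\CC(w,\emptyset)$, and then $b^{*}b$ is a non-through-block projective partition of $\CC$, hence lies in $\CC_{O^{++}}$ by the previous step. By the lemma characterising triviality (and Proposition \ref{prop:1dstability}), adjoining $b$ is the same as making the one-dimensional representation $t=u_{b^{*}b}$ trivial, and since rotations are invertible, adjoining $b$ adds exactly the original generator. By Lemma \ref{lem:cyclic1d} we have $t=s^{m}$ for some $m\in\Z$, so the exponents of all such relations form a subgroup $\{m:s^{m}=1\}=k\Z\subseteq\Z$, and imposing them all is equivalent to the single relation $s^{k}=1$. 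This exhibits $C(\G_{N}(\CC))$ as the quotient of $C(O_{N}^{++})$ by the group-like relation $s^{k}=1$, with $k=0$ corresponding to $\CC=\CC_{O^{++}}$. I expect the main obstacle to be the first step, producing $D_{xy}^{*}D_{xy}$: one must argue carefully that the colour mixing forced by ``not a free product'' can always be localised to a single isolable mixed block \emph{without} accidentally producing the excluded partition $\pi(x,y)$, which is exactly what the minimal-span choice and the capping induction are designed to prevent.
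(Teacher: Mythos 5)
Your proposal is correct and follows essentially the same route as the paper's proof: the same four steps (non-freeness forces a two-coloured block, a minimal-span argument plus capping isolates $D_{xy}^{*}D_{xy}$ so that $\CC_{O^{++}}\subseteq\CC$, Remark \ref{rem:noncrossingdecomposition} together with Lemma \ref{lem:non-through-blockorthogonal} places every projective partition of $\CC$ in $\CC_{O^{++}}$, and Lemma \ref{lem:cyclic1d} converts the remaining generators into the single group-like relation $s^{k}=1$). The only differences are cosmetic: you spell out the capping induction and the observation that the exponents form a subgroup $k\Z$, while compressing the induction (which the paper does write out) showing that a category whose blocks are all monochromatic is exactly the free product $\CC_{O^{+}}\ast\CC_{O^{+}}$.
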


\begin{proof}
Let us first remark that since $\CC_{O_{N}^{+}}\ast\CC_{O_{N}^{+}}$ is generated by $D_{xx}$ and $D_{yy}$, this free product of categories of partitions is contained in $\CC$. We now proceed in several steps.\\

\noindent\textbf{Step 1.} We first claim that that there is a partition $p$ in $\CC$ with a block $b$ having two different colours. To show this, let us assume the converse and prove by induction on the number of blocks that any partition in $\CC$ is in fact in $\CC_{O_{N}^{+}}\ast\CC_{O_{N}^{+}}$. If $p$ has one block, the result is clear. Otherwise, by noncrossingness $p$ contains an interval $b$. We can therefore rotate $p$ to put it in the form $b\otimes q$. Since $b\in \CC$, so is $q$ and by induction it is in the free product, as well as $b\otimes q$ and its rotated version $p$. Thus, $\CC$ is a free product, contradicting the assumptions of the Theorem.\\

\noindent\textbf{Step 2.} We now prove that $\CC_{O^{++}}\subset \CC$. Indeed, there exists $p\in \CC$ such that one block of $p$ is (up to relabelling the colours) $D_{xy}$. Let us consider, among all such partitions $p$ lying on one line, one with a block $b$ of the form $D_{xy}$ such that the number of points between its endpoints is minimal. The blocks nested inside $b$ must be either $D_{xx}$ or $D_{yy}$ but these can be removed by concatenation. Thus, there is no point between the endpoints of $b$, i.e.~it is an interval. Then, by Proposition \ref{prop:intervalblockstable}, $D_{xy}^{*}D_{xy}\in \CC$ and $\CC_{O^{++}}\subset \CC$.\\

\noindent\textbf{Step 3.} Let $p\in \CC$ be a projective partition. We want to prove that it is in $\CC_{O^{++}}$. As mentioned in Remark \ref{rem:noncrossingdecomposition}, $p$ can be written as an horizontal concatenation of identity partitions and non-through-block projective partitions with endpoints connected (recall that we only have pair partitions). The result thus follows directly from Lemma \ref{lem:non-through-blockorthogonal}.\\

\noindent\textbf{Step 4.} We can now conclude : any other partition, when rotated on one line, gives a group-like relation, which is $s^{k} = 1$ for some $k$ by Lemma \ref{lem:cyclic1d}.
\end{proof}

\section{Classification II : blocks of size at most two}\label{sec:bistochastic}

The next step is to consider categories of partitions $\CC$ such that all the partitions have blocks of size one or two. Let us first recall that if there were only one colour, then according to \cite{banica2009liberation} and \cite{weber2012classification} there would be three possible compact quantum groups, namely
\begin{itemize}
\item $B_{N}^{+}$ (the quantum bistochastic group) whose category of partitions is generated by the identity partition and a singleton $\{\{1\}\}\in P(1, 0)$,
\item $B_{N}^{+\sharp} = B_{N}^{+}\ast\Z_{2}$ whose category of partitions is generated by the identity partition and the double singleton partition $\{\{1\}, \{2\}\}\in NC(2, 0)$,
\item $B_{N}^{+\prime} = B_{N}^{+}\times\Z_{2}$ whose category of partitions $\CC_{B^{+\sharp}}$ is generated by the identity partition and $\{\{1\}, \{2, 4\}, \{3\}\}\in P(4, 0)$.
\end{itemize}
It is known that $B_{N}^{+}$ is isomorphic to $O_{N-1}^{+}$ so that we could expect this step to be easily deduced from the previous one. This fails however because the non-trivial one-dimensional representations of the one-coloured case enter the picture. More precisely, there will be two families of compact quantum groups in this section, one involving only relations with the group of one-dimensional representations and the other one involving a twisted amalgamated free product.

\subsection{The non-amalgamated case}

Assume that all the partitions in $\CC$ contain only blocks of size one and two but that blocks of size two are all of the form $D_{xx}$ or $D_{yy}$. We first need some results about one-dimensional representations. For any $1\leqslant i\leqslant N$, we set
\begin{equation*}
s_{x} = \sum_{k=1}^{N}u^{x}_{ik} \text{ and } s_{y} = \sum_{k=1}^{N}u^{y}_{ik}
\end{equation*}
and for a word $w$ on $\A$ we denote by $P_{w}$ the unique partition in $NC^{\A}(w, \emptyset)$ all of whose blocks have size one.

\begin{lem}\label{lem:onedbistochastic}
Let $N\geqslant 4$ be an integer and let $\CC$ be a category of noncrossing partitions such that $P_{xx}, P_{yy}\in \CC$. Then, the elements $s_{x}$ and $s_{y}$ do not depend on $i$. Moreover, they are group-like elements satisfying $s_{x}^{2} = 1 = s_{y}^{2}$ and they generate $\mathcal{G}(\G_{N}(\CC))$.
\end{lem}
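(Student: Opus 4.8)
The plan is to prove the three assertions in turn, the engine for all of them being the single relation encoded by the hypothesis $P_{xx},P_{yy}\in\CC$. The first thing I would do is record what $P_{xx}$ says representation-theoretically. By Theorem \ref{thm:tannakakrein} one has $T_{P_{xx}}\in\Mor(u^x\otimes u^x,\varepsilon)$, and since $P_{xx}$ consists of two singletons, $T_{P_{xx}}=\langle\xi\otimes\xi,\,\cdot\,\rangle$ with $\xi=\sum_i e_i$. Dually this means that $\xi\otimes\xi$ is an invariant vector of $u^x\otimes u^x$, and writing the invariance out componentwise yields, for all $i,j$,
\[
\Big(\sum_k u^x_{ik}\Big)\Big(\sum_l u^x_{jl}\Big)=1,\qquad\text{i.e.}\qquad s_x^{(i)}s_x^{(j)}=1,
\]
where $s_x^{(i)}=\sum_k u^x_{ik}$. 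Everything about $s_x$ will be squeezed out of this one identity.

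From here the first two claims are elementary. Taking $i=j$ gives $(s_x^{(i)})^2=1$, so each $s_x^{(i)}$ is its own inverse; then $s_x^{(i)}s_x^{(j)}=1=s_x^{(i)}s_x^{(i)}$ forces $s_x^{(j)}=s_x^{(i)}$, proving both independence of $i$ and $s_x^2=1$ at once (and symmetrically for $s_y$). To see that $s_x$ is group-like I would compute the coproduct directly, $\D(s_x)=\sum_{k,m}u^x_{im}\otimes u^x_{mk}=\sum_m u^x_{im}\otimes s_x^{(m)}=s_x\otimes s_x$, the last step using the independence just established; unitarity is then immediate since $x=x^{-1}$ makes the entries $u^x_{ij}$ self-adjoint, whence $s_x^*s_x=s_x^2=1$ (equivalently, $u^x(\xi\otimes 1)=\xi\otimes s_x$ already exhibits $s_x$ as a subrepresentation of the unitary $u^x$).

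The substantive part is generation, and this is where I expect the real work to lie. Given any $t\in\mathcal{G}(\G_N(\CC))$, Lemma \ref{lem:nonthroughblockstructure} together with Theorem \ref{thm:partitionrepresentations} writes $t=u_p$ for a non-through-block projective partition $p\in\CC(w,w)$ with some colouring $w=w_1\cdots w_n$. The key idea is to feed in the vector $\xi^{\otimes n}$: iterating $u^{w_j}(\xi\otimes 1)=\xi\otimes s_{w_j}$ (again relying on independence) gives
\[
u^{\otimes w}(\xi^{\otimes n}\otimes 1)=\xi^{\otimes n}\otimes(s_{w_1}\cdots s_{w_n}),
\]
so $\C\xi^{\otimes n}$ is a one-dimensional subrepresentation of $u^{\otimes w}$ carrying \emph{precisely} the group-like element $s_{w_1}\cdots s_{w_n}$. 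By Theorem \ref{thm:partitionrepresentations} this line equals $u_q$ for some projective $q\in\CC(w,w)$, necessarily with $t(q)=0$.

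To close, the crucial leverage is Lemma \ref{lem:equivalence1d}: since $p$ and $q$ are non-through-block projective partitions with the \emph{same} colouring $w$, they are equivalent, hence $u_p=u_q$, and two equivalent one-dimensional representations coincide as group-like elements. Therefore $t=u_p=u_q=s_{w_1}\cdots s_{w_n}\in\langle s_x,s_y\rangle$, which is the desired conclusion. The point I would watch most carefully is that this argument never requires the all-singleton partition $P_w$ on $w$ to lie in $\CC$: the invariant line $\C\xi^{\otimes n}$ exists inside $u^{\otimes w}$ regardless, and Lemma \ref{lem:equivalence1d} transports the value from this abstract copy back to the genuine partition $p$. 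That is exactly what lets the statement hold under the weak hypothesis $P_{xx},P_{yy}\in\CC$ alone, and it is the step where the mere equivalence of same-coloured projective partitions carries the whole argument.
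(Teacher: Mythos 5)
Your proof is correct and follows essentially the same route as the paper's: all three assertions are extracted from the relations encoded by $P_{xx}$ and $P_{yy}$ (the paper rotates $P_{xx}$ to get $P_{x}^{*}P_{x}\in\CC$ for the independence of $i$ and uses the partition equivalence $P_{xx}^{*}P_{xx}=(P_{x}^{*}P_{x})^{\otimes 2}\sim\emptyset$ for $s_{x}^{2}=1$, while you package both into the single identity $s_{x}^{(i)}s_{x}^{(j)}=1$ coming from the invariant vector $\xi\otimes\xi$), and the generation step rests, exactly as in the paper, on Lemma \ref{lem:equivalence1d} identifying an arbitrary non-through-block projective partition in $\CC(w,w)$ with one whose group-like element is visibly $s_{w_{1}}\cdots s_{w_{n}}$. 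The only cosmetic difference is that you realize this product as the subrepresentation carried by $\C\xi^{\otimes n}$ inside $u^{\otimes w}$ rather than as $u_{q}$ for $q=(P_{w_{1}}^{*}P_{w_{1}})\otimes\cdots\otimes(P_{w_{n}}^{*}P_{w_{n}})\in\CC$, which is the partition the paper's terse argument has in mind.
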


\begin{proof}
The computation is similar to that of Lemma \ref{lem:cyclic1d} and it is of course enough to do it for $x$. More precisely, rotating $P_{xx}$ we get $P_{x}^{*}P_{x}\in \CC$, implying that the sum defining $s_{x}$ does not depend on $i$. Checking that the element is group-like is straightforward. Eventually, $P_{xx}$ is an equivalence between $(P_{x}^{*}P_{x})^{\otimes 2}$ and $\emptyset$, hence $s_{x}^{2} = 1$. The fact that $s_{x}$ and $s_{y}$ generate the group of group-like elements is a direct consequence of Lemma \ref{lem:equivalence1d}.
\end{proof}

Let $p$ be a partition in $\CC$ with a block of size one which can be assumed (up to relabelling the colours) to be coloured with $x$. This block is full, hence $P_{x}^{*}P_{x}\in \CC$ by Proposition \ref{prop:intervalblockstable} and $P_{xx}\in \CC$. However, $P_{yy}$ need not be in $\CC$ and we first treat the case where $P_{yy}\notin\CC$. To do this, we introduce another compact quantum group :

\begin{de}
The C*-algebra of the compact quantum group $BO_{N}^{+\sharp}$ is defined to be the quotient of $C(B_{N}^{+\sharp}\ast O_{N}^{+})$ by the relations
\begin{equation*}
s_{x} u_{ij}^{yy} = u^{yy}_{ij}s_{x}
\end{equation*}
for all $1\leqslant i, j\leqslant \dim(u^{yy})$. 
\end{de}

By Proposition \ref{prop:amalgamatedstability}, $BO_{N}^{+\sharp}$ is a noncrossing partition quantum group. Moreover, its category of partitions is generated by the free product $\CC_{B^{+\sharp}}\ast\CC_{O^{+}}$ together with the partition $P_{x}\otimes \pi(y, y)\otimes \pi(y, y)\otimes P_{x}$. The idea for the classification is that if $\CC$ is not a free product, then the corresponding compact quantum group is a quotient of $BO_{N}^{+\sharp}$.

\begin{lem}\label{lem:bistochasticnotfreeproduct}
Let $N\geqslant 4$ be an integer and let $\CC$ be a category of noncrossing partitions such that all blocks have size at most two, $P_{xx}\in \CC$, $P_{yy}\notin \CC$ and $D_{xy}$ is not a block in $\CC$. If it is not a free product, then $P_{x}\otimes \pi(y, y)\otimes \pi(y, y)\otimes P_{x}\in \CC$.
\end{lem}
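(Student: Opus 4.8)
My plan is to measure $\CC$ against the free product $\CC_0:=\CC_{B^{+\sharp}}\ast\CC_{O^+}$, where $B_N^{+\sharp}$ sits on the colour $x$ and $O_N^+$ on the colour $y$. Since $P_{xx}\in\CC$ by hypothesis and $D_{yy}\in\CC$ as a rotation of the $y$-identity, the generators of $\CC_0$ all lie in $\CC$, so $\CC_0\subseteq\CC$; if $\CC$ is not a free product then in particular $\CC\neq\CC_0$, hence $\CC_0\subsetneq\CC$. Two normalisations come first. No partition of $\CC$ can contain a $y$-coloured singleton: such a singleton is a full subpartition, so Proposition~\ref{prop:intervalblockstable} would give $P_y^*P_y\in\CC$ and, after rotating the lower point up, $P_{yy}\in\CC$, against the hypothesis. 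Second, if the lone singleton $P_x$ lies in $\CC$ the conclusion is immediate, because $P_x\otimes\pi(y,y)\otimes\pi(y,y)\otimes P_x$ is then a tensor product of elements of $\CC$; I may therefore assume $P_x\notin\CC$, which by capping forces every one-line partition of $\CC$ coloured only by $x$ to have an even number of singletons.

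Rotating partitions onto one line, the heart of the argument is a description of membership in $\CC_0$ through the representation dictionary. Each $x$-singleton contributes the group-like element $s_x$ (which squares to $1$, coming from $P_{xx}$), while each $y$-block is absorbed by the orthogonality relation $\sum_m u^y_{im}u^y_{km}=\delta_{ik}$ carried by $D_{yy}$. One then checks that a one-line partition lies in $\CC_0$ exactly when every $y$-block encloses an \emph{even} number of $x$-singletons — so that the trapped product of $s_x$'s is trivial and the enclosing $y$-pair may collapse by orthogonality — the remaining $x$-singletons being even in number as well. Since $\CC_0\subsetneq\CC$, I pick $p\in\CC\setminus\CC_0$; once rotated onto one line it must contain a $y$-block enclosing an odd number of $x$-singletons. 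I then reduce $p$ inside $\CC$ by capping all full substructures that already belong to $\CC_0$ (the pairs $D_{xx},D_{yy}$, the double singletons $P_{xx}$, and every even-content $y$-block), until the only remaining obstruction is a single $y$-pair trapping one $x$-singleton, necessarily balanced by a further $x$-singleton because $P_x\notin\CC$ rules out an odd total. Feeding this configuration through the $*$- and rotation operations, exactly as in the proof of Proposition~\ref{prop:commutationrelations} and using $x^{-1}=x$ to place an $x$-singleton on each side of the two $y$-strings, produces precisely the partition $P_x\otimes\pi(y,y)\otimes\pi(y,y)\otimes P_x$ realizing $s_x u^{yy}_{ij}=u^{yy}_{ij}s_x$.

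The main obstacle is this middle step: making the membership criterion for $\CC_0$ precise and, above all, carrying out the reduction so that it terminates at the commutation relation with $u^{yy}$ rather than at some weaker or ``twisted'' interaction of $s_x$ with the $y$-part (the delicate point being that the $x$-singleton nested inside the $y$-pair does not collapse for free, precisely because $s_x$ need not commute past $u^y$). This is exactly where the two standing hypotheses intervene: the absence of $y$-singletons ($P_{yy}\notin\CC$) and of mixed pair blocks ($D_{xy}$ not a block) keep every block monochromatic and prevent any group-like element from appearing on the $y$-side, so that the only possible failure of freeness is the forced commutation of $s_x$ with $u^{yy}$. Granting this, that the extracted partition is the stated one — and that it encodes the commutation relation defining $BO_N^{+\sharp}$ — is the routine category-operations computation already performed when introducing $BO_N^{+\sharp}$, with linear independence of the $T_p$ (for $N\geqslant 4$) guaranteeing that the relation and the partition correspond.
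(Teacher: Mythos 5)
Your overall strategy --- measure $\CC$ against a reference free product, use non-freeness to extract a partition outside it, and reduce that partition to $P_{x}\otimes\pi(y,y)\otimes\pi(y,y)\otimes P_{x}$ --- is the same as the paper's, but two steps do not hold up as written. The first problem is that your reference free product is too small. You fix $\CC_{0}=\CC_{B^{+\sharp}}\ast\CC_{O^{+}}$, but the $x$-coloured part $\CC_{x}$ of $\CC$ may be strictly larger than $\CC_{B^{+\sharp}}$: it is one of the three bistochastic categories, e.g.\ $\CC_{B^{+\prime}}$, whose generator $\{\{1\},\{2,4\},\{3\}\}$ is purely $x$-coloured and lies outside $\CC_{0}$. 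A partition $p\in\CC\setminus\CC_{0}$ therefore need not contain any $y$-point, so your claim that it ``must contain a $y$-block enclosing an odd number of $x$-singletons'' is false in general, and the reduction can terminate at a monochromatic $x$-configuration that says nothing about $u^{yy}$. The paper avoids this by taking the reference to be $\DD=\CC_{x}\ast\CC_{y}$ with $\CC_{x}$ the \emph{full} $x$-coloured subcategory of $\CC$, so that a projective partition of $\CC$ outside $\DD$ genuinely witnesses an interaction between the two colours.

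The second and more serious problem is that the heart of the argument --- your membership criterion for the free product and the termination of the capping procedure at exactly one $y$-pair trapping one $x$-singleton --- is asserted rather than proved; you flag it yourself as ``the main obstacle'' and then proceed by ``granting this''. This is precisely the step the paper supplies, and it is representation-theoretic rather than purely combinatorial: take a projective partition $p\in\CC\setminus\DD$ with a minimal number of blocks; minimality forbids a horizontal decomposition, so the endpoints of each row are connected and $p$ rotates to $b\otimes\pi(a,a)\otimes\pi(a,a)\otimes b^{*}$; then $b^{*}b\in\DD$ by minimality, and the computation of the one-dimensional representations of a free product (here $\mathcal{G}(\G_{N}(\DD))=\Z_{2}$, generated by $s_{x}$, by Wang's theorem) forces either $b\in\DD$ (excluded, since then $p\in\DD$) or $b^{*}b\sim_{\DD}P_{x}$; substituting via this equivalence gives $P_{x}\otimes\pi(a,a)\otimes\pi(a,a)\otimes P_{x}\in\CC$, and $a=x$ is impossible because that partition would lie in $\CC_{x}\subset\DD$. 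Your heuristic (each $x$-singleton contributes $s_{x}$, the $y$-side carries no group-like elements) points in the right direction, but without the minimal-counterexample structure and the free-product representation theory the argument does not close.
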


\begin{proof}
Let $\CC_{x}$ be the category of all partitions in $\CC$ which are only coloured by $x$. By the classification of orthogonal easy quantum groups, it is the category of partitions of one of the three bistochastic quantum groups since by assumption $D_{xx}, P_{xx}\in \CC$. Moreover, $\CC_{y} = \CC_{O^{+}}$. Set $\DD = \CC_{x}\ast \CC_{y}$ and note that $\DD\subset\CC$. We claim that if $\CC$ is not a free product, then at least one of its projective partitions is not in $\DD$. Indeed, if all projective partitions in $\CC$ lie in $\DD$, then $\G_{N}(\CC)$ is a quotient of $\G_{N}(\DD)$ by group-like relations. But \cite[Thm 3.10]{wang1995free} implies that $\G_{N}(\DD)$ has only one non-trivial one-dimensional representation $s_{x}$, which is moreover of order two. Thus, the only possible group-like relation is $s_{x} = 1$, which is equivalent to $P_{x}\in \CC$. But then, $P_{x}\in \CC_{x}\subset\DD$ so that $\CC = \DD$.

We will now prove the statement. Let $p$ be a projective partition in $\CC$ such that any projective partition with strictly less blocks is in $\DD$. If $p$ decomposes as an horizontal concatenation $p = p'\otimes p''$, then $p', p''\in \DD$ by assumption so that $p\in \DD$, a contradiction. Thus, $p$ cannot be decomposed, which means that the endpoints of each row are connected. Rotating $p$ then yields the partition $b\otimes\pi(a, a)\otimes\pi(a, a)\otimes b^{*}$ for some partition $b\in \CC(w, \emptyset)$. By Proposition \ref{prop:intervalblockstable}, $b^{*}b\in \CC$ and since it has less blocks than $p$, $b^{*}b\in \DD$. Because the only non-trivial one-dimensional representation of $\G_{N}(\DD)$ is $s_{x} = u_{P_{x}}$, either $b\in \DD$ or $b^{*}b\sim_{\DD} P_{x}$. In the first case $p\in \DD$, a contradiction. Thus we are in the second case and using the equivalence we see that $P_{x}\otimes\pi(a, a)\otimes\pi(a, a)\otimes P_{x}\in \CC$. If $a = x$, this is a partition in $\CC_{x}$, hence in $\DD$. Thus, $a = y$ and the proof is complete.
\end{proof}

The partition $P_{x}\otimes\pi(y, y)\otimes \pi(y, y)\otimes P_{x}$ can be rotated to produce the non-through-block projective partition
\begin{center}
\begin{tikzpicture}[scale=0.5]
\draw (-1,0.5) -- (1,0.5);
\draw (-1,-0.5) -- (1,-0.5);

\draw (-1,0.5) -- (-1,1.5);
\draw (1,0.5) -- (1,1.5);
\draw (-1,-0.5) -- (-1,-1.5);
\draw (1,-0.5) -- (1,-1.5);

\draw (0,1.5) node{$\overset{x}{\circ}$};
\draw (0,-1.5) node{$\underset{x}{\circ}$};

\draw (-1,1.5) node[above]{$y$};
\draw (1,1.5) node[above]{$y$};
\draw (-1,-1.5) node[below]{$y$};
\draw (1,-1.5) node[below]{$y$};

\draw (-2,0) node[left]{$p_{\sharp} = $};
\end{tikzpicture}
\end{center}
In general, the corresponding one-dimensional representation is non-trivial and together with $s_{x}$ they generate the group of one-dimensional representations. This makes room for many possible commutation relations and group-like relations and our classification result will be that everything can be obtained in that way. Before that, let us be more precise about the one-dimensional representations of $BO_{N}^{+\sharp}$.

\begin{lem}\label{lem:1dnonamalgamatednoy}
Setting $s_{\sharp} = u_{p_{\sharp}}$, the group of one-dimensional representations of $BO_{N}^{+\sharp}$ is generated by $s_{x}$ and $s_{\sharp}$. Moreover, for any $t\in \Gr(BO_{N}^{+\sharp})$,
\begin{equation*}
tu^{yy}_{ij} = u^{yy}_{ij}t
\end{equation*}
for all $1\leqslant i, j\leqslant \dim(u^{yy})$.
\end{lem}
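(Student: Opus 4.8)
The plan is to read off the group $\Gr(BO_{N}^{+\sharp})$ from the non-through-block projective partitions in the category $\CC$ of $BO_{N}^{+\sharp}$. By Lemma \ref{lem:nonthroughblockstructure} every group-like element is of the form $u_{b^{*}b}$ for a one-line partition $b$, and by Lemma \ref{lem:equivalence1d} the element $u_{b^{*}b}$ depends only on the colouring $w$ of $b$; write $\sigma(w):=u_{b^{*}b}$. Since $(b_{1}\otimes b_{2})^{*}(b_{1}\otimes b_{2})=(b_{1}^{*}b_{1})\otimes(b_{2}^{*}b_{2})$, the map $\sigma$ is multiplicative with respect to concatenation of colourings, and $\sigma(xx)=\sigma(yy)=1$ (the first because $s_{x}^{2}=1$ by Lemma \ref{lem:onedbistochastic}, the second because $D_{yy}\in\CC$ makes $u_{D_{yy}^{*}D_{yy}}$ trivial). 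Finally, as $P_{yy}\notin\CC$ there is no $y$-coloured singleton and $D_{xy}$ is not a block, so in any such $b$ the $y$'s occur only inside $D_{yy}$-pairs; hence every colouring carrying a group-like element has an even number of $y$'s.

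The core claim I would then prove is that $\sigma$ factors through the reduced word of $w$ in $G=\Z_{2}\ast\Z_{2}=\langle x,y\mid x^{2}=y^{2}=1\rangle$, i.e.\ that cancelling two adjacent equally-coloured points preserves the group-like element. When the two points form a top-level interval $D_{cc}$ this is just the concatenation identity above together with Proposition \ref{prop:intervalblockstable} (which keeps the flanking pieces in $\CC$); the general case is the obstacle discussed below. Granting it, the even-$y$-count colourings map into the index-two subgroup $E=\ker(\Z_{2}\ast\Z_{2}\to\Z_{2})$, where the map sends $y\mapsto 1$, $x\mapsto 0$, and a Reidemeister--Schreier computation with Schreier transversal $\{1,y\}$ gives $E=\langle x,\,yxy\rangle$. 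Since $s_{x}=\sigma(x)$ and $s_{\sharp}=\sigma(yxy)$ (the partition $p_{\sharp}$ has upper colouring $yxy$), this yields $\Gr(BO_{N}^{+\sharp})=\langle s_{x},s_{\sharp}\rangle$.

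For the commutation statement I would observe that the set of $t\in\Gr(BO_{N}^{+\sharp})$ commuting with every $u^{yy}_{ij}$ is a subgroup, so by the first part it suffices to treat the two generators. For $s_{x}$ this is precisely the defining relation of $BO_{N}^{+\sharp}$. For $s_{\sharp}$ I would use the mechanism of Proposition \ref{prop:commutationrelations}: writing $s_{\sharp}=u_{b_{\sharp}^{*}b_{\sharp}}$ and letting $v=u_{\pi(y,y)\otimes\pi(y,y)}$ be the unique nontrivial irreducible constituent of $u^{yy}=u^{y}\otimes u^{y}$ (the other constituent being the trivial representation, with which $s_{\sharp}$ commutes automatically), the relation $s_{\sharp}v_{ij}=v_{ij}s_{\sharp}$ holds as soon as $b_{\sharp}\otimes\pi(y,y)\otimes\pi(y,y)\otimes\overline{b_{\sharp}}^{*}\in\CC$; after the scalar change of basis diagonalising $u^{yy}$ this gives commutation with all $u^{yy}_{ij}$. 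This partition is produced by enclosing the two $x$-singletons of the defining partition $p_{\sharp}=P_{x}\otimes\pi(y,y)\otimes\pi(y,y)\otimes P_{x}$ inside $D_{yy}$-pairs, which is legitimate since $D_{yy}\in\CC$.

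The routine ingredients are the multiplicativity of $\sigma$, the identity $E=\langle x,yxy\rangle$, and the bookkeeping of the last paragraph. The main obstacle is the claim underlying the $c=y$ case, namely that cancelling two adjacent equally-coloured points preserves $\sigma$ even when the two points lie in \emph{different} (nested) blocks, so that cancellation merges those blocks into one enclosing block. This is not a horizontal-concatenation identity: it must be exhibited as an explicit equivalence of projective partitions built from the cup $D_{cc}^{*}\in\CC$, whose verification (checking that $r^{*}r$ and $rr^{*}$ are exactly the two projective partitions before and after the merge) is the one point where the non-crossing structure and the presence of $D_{xx},D_{yy}$ in $\CC$ must be used with care.
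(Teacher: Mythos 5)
Your first assertion is proved along essentially the same lines as the paper (cancel adjacent equal colours, observe the $y$'s come in pairs, match the reduced colouring against concatenations of $P_{x}$ and $p_{\sharp}$; the Reidemeister--Schreier packaging of the last step is just a restatement of the fact that every alternating word with evenly many $y$'s is a concatenation of $x$ and $yxy$). However, the step you flag as ``the main obstacle'' --- cancelling two adjacent equally coloured points when they lie in different nested blocks --- is not an obstacle, and you should not leave it open. By Lemma \ref{lem:equivalence1d} the group-like element $u_{b^{*}b}$ depends only on the colouring, so you never need the cancellation to produce the specific ``merged'' partition: if $p\in\CC(w,w)$ is non-through-block projective with $w=w_{1}ccw_{2}$, compose $p$ with identity strings tensored with $D_{cc}$ at the two points to be cancelled to get $r\in\CC(w,w_{1}w_{2})$; then $r^{*}r$ is non-through-block projective on $w$, hence equivalent to $p$ by Lemma \ref{lem:equivalence1d}, while $rr^{*}$ is non-through-block projective on $w_{1}w_{2}$, and $r$ is an equivalence between them. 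The block structure of $rr^{*}$ is irrelevant. This is exactly the paper's one-line remark that neighbouring points of the same colour can be removed up to equivalence by concatenation with $D_{xx}$ or $D_{yy}$.

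The genuine gap is in the commutation of $s_{\sharp}$ with $u^{yy}$. You correctly reduce to exhibiting $q_{\sharp}\otimes\pi(y,y)\otimes\pi(y,y)\otimes q_{\sharp}^{*}\in\CC$, where $q_{\sharp}$ is the one-line partition coloured $yxy$ with the $y$-pair nested around the $x$-singleton, but you justify this by ``enclosing the two $x$-singletons inside $D_{yy}$-pairs, which is legitimate since $D_{yy}\in\CC$''. Enclosing is not a category operation: the morphism that would implement it by vertical concatenation is the partition in $P(yxy,x)$ with blocks $\{1,3\}$ and $\{2,4\}$, which has a crossing and therefore can never lie in a category of noncrossing partitions. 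What makes the enclosure possible here is not $D_{yy}\in\CC$ but $p_{\sharp}\in\CC$ itself: the paper composes two rotations of $p_{\sharp}$,
\begin{equation*}
\left[\pi(y, y)^{\otimes 2}\otimes P_{x}\otimes\pi(y, y)^{\otimes 2}\otimes P_{x}^{*}\right]\left[P_{x}\otimes\pi(y, y)^{\otimes 2}\otimes P_{x}^{*}\otimes\pi(y, y)^{\otimes 2}\right] = P_{x}\otimes\pi(y,y)^{\otimes 4}\otimes P_{x}^{*},
\end{equation*}
thereby sliding the singleton across two $y$-strings, and a rotation of the result is precisely $q_{\sharp}\otimes\pi(y,y)^{\otimes 2}\otimes q_{\sharp}^{*}$. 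Without an argument of this kind your proof of the second assertion is incomplete. The remaining bookkeeping (the commutant of the coefficients of $u^{yy}$ is a subgroup of $\Gr(BO_{N}^{+\sharp})$, so it suffices to treat $s_{x}$ and $s_{\sharp}$; $s_{x}$ commutes by the defining relation; the trivial constituent of $u^{yy}$ is harmless) is correct and matches the paper.
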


\begin{proof}
We first show that the group of group-like elements is generated by $s_{x}$ and $s_{\sharp}$. Let $p = q^{*}q\in\CC_{BO^{+\sharp}}$ be a non-through-block projective partition with $q$ lying on one line. Any two neighbouring points with the same colour can be removed up to equivalence by concatenation with $D_{xx}$ or $D_{yy}$. Thus, we may assume that $q$ is alternating (in the sense that the colours alternate). Moreover, since the blocks coloured by $y$ must be $D_{yy}$, there is an even number of $y$'s in $q$. Therefore, there is a horizontal concatenation of $P_{x}$ and $p_{\sharp}$ which has the same upper colouring as $q$. By Lemma \ref{lem:equivalence1d}, $p$ is equivalent to this partition, proving the first part of the statement.

As for the second part, first note that $P_{x}\otimes \pi(y, y)\otimes \pi(y, y)\otimes P_{x}^{*}$, which is a rotation of $p_{\sharp}$, yields as in the proof of Proposition \ref{prop:commutationrelations} that $s_{x}$ commutes with all the coefficients of $u^{yy}$. Moreover,
\begin{align*}
P_{x}\otimes\pi(y, y)^{\otimes 4}\otimes P_{x}^{*} & = \left[\pi(y, y)^{\otimes 2}\otimes P_{x}\otimes\pi(y, y)^{\otimes 2}\otimes P_{x}^{*}\right]\left[P_{x}\otimes\pi(y, y)^{\otimes 2}\otimes P_{x}^{*}\otimes\pi(y, y)^{\otimes 2}\right] \\
& \in \CC_{BO^{+\sharp}}
\end{align*}
and rotating this partition yields $q_{\sharp}\otimes\pi(y, y)\otimes\pi(y, y)\otimes q_{\sharp}^{*}\in \CC_{BO^{+\sharp}}$, where $q_{\sharp}$ is the upper row of $p_{\sharp}$. As before, this implies that $s_{\sharp}$ commutes with all the coefficients of $u^{yy}$. By the first part of the statement, $s_{x}$ and $s_{\sharp}$ generate the group of group-like elements, thus the commutation relation holds for all group-like elements.
\end{proof}

We are now ready for a classification statement.

\begin{prop}\label{prop:nonamalgamatednoy}
Let $N\geqslant 4$ be an integer and let $\CC$ be a category of noncrossing partitions such that all blocks have size at most two, $P_{xx}\in \CC$, $P_{yy}\notin \CC$ and $D_{xy}$ is not a block in $\CC$. If it is not a free product, then $C(\G_{N}(\CC))$ is a quotient of $C(BO_{N}^{+\sharp})$ by one or several of the following relations :
\begin{itemize}
\item $tu^{xx}_{ij} = u^{xx}_{ij}t$ for some group-like element $t$ and all $1\leqslant i, j\leqslant \dim(u^{xx})$,
\item group-like relations.
\end{itemize}
\end{prop}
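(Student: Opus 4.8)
The plan is to exhibit $\CC$ as generated, over the category $\CC_{BO^{+\sharp}}$ of $BO_N^{+\sharp}$, by partitions each of which realizes either a commutation relation with $u^{xx}$ or a group-like relation, and then to read off the statement via Propositions \ref{prop:commutationrelations} and \ref{prop:1dstability}. First I would fix the ambient inclusion. Since $\CC$ is not a free product, Lemma \ref{lem:bistochasticnotfreeproduct} gives $P_{x}\otimes\pi(y,y)\otimes\pi(y,y)\otimes P_{x}\in\CC$, a rotation of $p_{\sharp}$; together with $P_{xx}\in\CC$ and the fact that $D_{xx}$ and $D_{yy}$ are rotations of the identity partitions (hence always lie in $\CC$), this shows that $\CC_{BO^{+\sharp}}\subset\CC$, so that $\G_{N}(\CC)$ is automatically a quotient of $BO_{N}^{+\sharp}$. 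The second preliminary observation, which I will use constantly, is that $P_{yy}\notin\CC$ forbids $y$-singletons (by Proposition \ref{prop:intervalblockstable}) and $D_{xy}$ is not a block, so every block of every partition of $\CC$ is an $x$-singleton, a $D_{xx}$ or a $D_{yy}$.

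The core of the argument is the analysis of the projective partitions of $\CC$, organised by the decomposition of Remark \ref{rem:noncrossingdecomposition} into through-block pieces with a single through-block and non-through-block pieces with connected endpoints. Because all blocks have size at most two, a through-block is a single line, and such a line cannot be straddled in a noncrossing way by a $D_{xx}$, a $D_{yy}$ or a singleton; hence the through-block pieces are exactly $\pi(x,x)$ and $\pi(y,y)$, which already lie in $\CC_{BO^{+\sharp}}$. The commutation relations appear only among the non-through-block pieces $q^{*}q$: building such a piece inside $\CC_{BO^{+\sharp}}$ amounts to rotating a tensor product of identity lines and the generators $P_{xx}, D_{xx}, D_{yy}, p_{\sharp}$, and this succeeds exactly when every enclosing $x$-block surrounds content whose associated one-dimensional representation already commutes, in $BO_{N}^{+\sharp}$, with the enclosing $x$-structure. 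By Lemma \ref{lem:1dnonamalgamatednoy} all group-like elements commute with $u^{yy}$, so enclosures by $y$-pairs never obstruct; the only obstruction is an $x$-block enclosing a nontrivial group-like, which in $BO_{N}^{+\sharp}$ fails to commute with $u^{xx}$. When this happens the corresponding $q^{*}q$ is not in $\CC_{BO^{+\sharp}}$, and the point is that its membership in $\CC$ is equivalent to the intertwining condition for $T_{q^{*}q}$, which, by the computation underlying Proposition \ref{prop:commutationrelations}, unwinds precisely into the relation $t\,u^{xx}_{ij}=u^{xx}_{ij}\,t$ for the group-like $t\in\langle s_{x},s_{\sharp}\rangle$ read off from the block pattern of $q$. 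Thus, passing from $\CC_{BO^{+\sharp}}$ to the category generated by all projective partitions of $\CC$ is the same as quotienting $BO_{N}^{+\sharp}$ by commutation relations of the first listed type.

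It remains to account for the non-projective partitions, and here I would follow the scheme of Theorem \ref{thm:biorthogonalclassification}. Once all projective partitions are present, any further partition of $\CC$, rotated entirely onto one line, produces by Lemma \ref{lem:nonthroughblockstructure} a non-through-block projective partition $b^{*}b$ whose one-dimensional representation is forced to be trivial. Since $\Gr(BO_{N}^{+\sharp})=\langle s_{x},s_{\sharp}\rangle$ by Lemma \ref{lem:1dnonamalgamatednoy}, this is exactly a group-like relation, handled by Proposition \ref{prop:1dstability}. Assembling the three steps — single lines and the ``compatible'' non-through pieces giving $\CC_{BO^{+\sharp}}$, the obstructed non-through pieces giving commutations with $u^{xx}$, and the remaining partitions giving group-like relations — yields that $C(\G_{N}(\CC))$ is the quotient of $C(BO_{N}^{+\sharp})$ by commutation relations with $u^{xx}$ and group-like relations, as claimed.

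I expect the main obstacle to be the middle step: isolating the exact commutation content of an obstructed non-through-block projective partition and proving that adding it to $\CC_{BO^{+\sharp}}$ imposes neither more nor less than a commutation with $u^{xx}$ — never a spurious group-like relation, and in particular nothing involving $u^{yy}$, which is already free by Lemma \ref{lem:1dnonamalgamatednoy}. The cleanest route is probably an induction on the number of blocks of $q$, peeling off full subpartitions by Proposition \ref{prop:intervalblockstable} and using Lemma \ref{lem:equivalence1d} to normalise the colouring, so as to reduce to a single enclosing $x$-block; in that base case the intertwiner condition for $T_{q^{*}q}$ becomes a relation of the shape $\sum_{i}u^{x}_{ki}\,t\,u^{x}_{li}=\delta_{kl}\,t$, which one checks is equivalent to the commutation of $t$ with the coefficients of $u^{xx}$. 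Keeping careful track, along this induction, of which group-like $t$ is produced and of the fact that the decomposition into commutation and group-like relations is consistent across steps is where the real work lies.
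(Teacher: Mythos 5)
Your proposal is correct and follows essentially the same route as the paper: get $p_{\sharp}\in\CC$ from Lemma \ref{lem:bistochasticnotfreeproduct}, reduce via Remark \ref{rem:noncrossingdecomposition} to non-through-block pieces $q^{*}q$ with connected endpoints, note that $y$-enclosures are harmless by Lemma \ref{lem:1dnonamalgamatednoy} while $x$-enclosures yield exactly commutation relations with $u^{xx}$, and absorb the remaining partitions into group-like relations. The only (immaterial) divergence is that the paper first rules out $P_{x}\in\CC$ and works over the intermediate category $\DD$ generated by $\CC_{x}\ast\CC_{O^{+}}$ and $p_{\sharp}$ rather than directly over $\CC_{BO^{+\sharp}}$.
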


\begin{proof}
\noindent\textbf{Step 1.} By Lemma \ref{lem:bistochasticnotfreeproduct}, $p_{\sharp}\in \CC$. Moreover, the same argument as in \textbf{Step 1.} of the proof of Theorem \ref{thm:biorthogonalclassification} shows that if $P_{x}\in \CC$, then all partitions with blocks of size at most two and only $x$-blocks of size one are in $\CC$. In other words, $\CC = \CC_{B^{+}}\ast\CC_{O^{+}}$ in that case. Thus, $P_{x}\notin \CC$. As before, we denote by $\CC_{x}$ the category of all partitions in $\CC$ coloured only by $x$.\\

\noindent\textbf{Step 2.} Let $\DD\subset \CC$ be the category of partitions generated by $\CC_{x}\ast\CC_{O^{+}}$ (seeing $\CC_{O^{+}}\subset \CC_{y}$) and $p_{\sharp}$. We claim that for any projective partition $p\in \CC$ there exists $\DD\subset\CC'\subset \CC$ such that $p\in \CC'$ and $\G_{N}(\CC')$ is obtained by quotienting $\G_{N}(\DD)$ by the first relation in the statement. By the same arguments as in the proof of Theorem \ref{thm:biorthogonalclassification}, it is enough to consider partitions of the form $p = q^{*}q$ where $q$ lies on one line and has its endpoints connected. Rotating yields the partition 
\begin{equation*}
r = b\otimes\pi(a, a)\otimes\pi(a, a)\otimes b^{*}\in \CC.
\end{equation*}
If $a=y$ then $r\in \DD$ by the second part of Lemma \ref{lem:1dnonamalgamatednoy}. Otherwise, if $\CC'$ is the category of partitions generated by $\DD$ and $r$, then $C(\G_{N}(\CC'))$ is the quotient of $C(\G_{N}(\DD))$ by the relations $u_{b^{*}b}u^{xx}_{ij} = u^{xx}_{ij}u_{b^{*}b}$ for all $1\leqslant i, j\leqslant \dim(u^{xx})$, hence the result.\\

\noindent\textbf{Step 3.} According to the first two steps, there exists $\DD\subset\CC'\subset\CC$ such that $C(\G_{N}(\CC'))$ is a quotient of $C(\G_{N}(\DD))$ by commutation relations and all projective partitions of $\CC$ are in $\CC'$. It follows that $C(\G_{N}(\CC))$ is a quotient of $C(\G_{N}(\CC'))$ by group-like relations.
\end{proof}

\begin{rem}
The set of possible relations in the previous statement may seem large, but it can in fact be easily described. First note that they are in fact determined by two subgroups of $\Gr(BO_{N}^{+\sharp})$ : the subgroup of elements commuting with $u^{xx}$ and the subgroup of elements made trivial, which is normal and contained in the first one. Since $\Gr(BO_{N}^{+\sharp})$ is generated by two elements of order two, it is a dihedral group and its subgroup are classified : they are generated either by $(s_{x}s_{\sharp})^{n}$ for some $n\in \N$, or by $(s_{x}s_{\sharp})^{m}s_{x}$ for some $m\in \Z$, or by the two previous elements. It is easy to see that such a subgroup is normal if and only if it is generated by the first type of elements, so that the quotients of $BO_{N}^{+\sharp}$ appearing in Proposition \ref{prop:nonamalgamatednoy} are parametrized by three integers $(d, m, n)$ with $m\mid d$ and $\vert n\vert < m$.
\end{rem}

If now both $P_{xx}$ and $P_{yy}$ belong to $\CC$, then both $s_{x}$ and $s_{y}$ may be involved in commutation relations, as well as any element of the infinite dihedral group $\Z_{2}\ast\Z_{2}$ that they generate inside $C(B_{N}^{+\sharp})\ast C(B_{N}^{+\sharp})$. Moreover, since points coloured with $y$ need not be connected any more, the proof of Lemma \ref{lem:bistochasticnotfreeproduct} breaks down so that there is no automatic commutation relation.

\begin{prop}
Let $N\geqslant 4$ be an integer and let $\CC$ be a category of noncrossing partitions with blocks of size at most two such that $P_{xx}, P_{yy}\in \CC$. If it is not a free product, then $C(\G_{N}(\CC))$ is the quotient of $C(B_{N}^{+\sharp})\ast C(B_{N}^{+\sharp})$ by one or several of the following relations
\begin{itemize}
\item $t u^{xx}_{ij} = u^{xx}_{ij}t$ for all $1\leqslant i, j \leqslant \dim(u^{xx})$, where $t$ is some group-like element,
\item $t u^{yy}_{ij} = u^{yy}_{ij}t$ for all $1\leqslant i, j \leqslant \dim(u^{yy})$, where $t$ is some group-like element,
\item group-like relations.
\end{itemize}
\end{prop}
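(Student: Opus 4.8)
The plan is to repeat the three-step scheme of Theorem \ref{thm:biorthogonalclassification} and Proposition \ref{prop:nonamalgamatednoy}, now over the base object $B_N^{+\sharp}\ast B_N^{+\sharp}$. Write $\DD$ for its category of partitions; by \cite[Prop 4.12]{freslon2014partition} this is the free product $\CC_{B^{+\sharp}}\ast\CC_{B^{+\sharp}}$ of the $x$- and $y$-coloured copies, hence is generated by the two identity partitions together with $P_{xx}$ and $P_{yy}$. Since $P_{xx},P_{yy}\in\CC$ by hypothesis, $\DD\subset\CC$. By Lemma \ref{lem:onedbistochastic} the group $\Gr(\G_N(\CC))$ is generated by $s_x$ and $s_y$, both of order two, so inside $B_N^{+\sharp}\ast B_N^{+\sharp}$ it is the infinite dihedral group $\Z_2\ast\Z_2$; we also record that $P_x^{*}P_x$ and $P_y^{*}P_y$ lie in $\CC$ (rotations of $P_{xx}$ and $P_{yy}$) and that $D_{xx},D_{yy}\in\CC$ (rotations of the identity partitions). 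In contrast with Proposition \ref{prop:nonamalgamatednoy}, no commutation relation is now forced: the mechanism of Lemma \ref{lem:bistochasticnotfreeproduct} relied on $y$-points being paired, which fails here, so the base is genuinely the plain free product and the hypothesis that $\CC$ is not a free product only serves to discard the degenerate quotients where no mixing occurs.

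The core is Step 2: every projective partition of $\CC$ should lie in a category obtained from $\DD$ by adjoining commutation relations of the two listed kinds. Following Remark \ref{rem:noncrossingdecomposition} we would induct on the number of blocks. A decomposable projective partition is a horizontal concatenation of two partitions with strictly fewer blocks, both reachable by induction, so it suffices to treat an indecomposable $p=q^{*}q$ with $q$ on one line and endpoints joined by a single block $D_{ab}$ (recall blocks have size at most two). When $a=b\in\{x,y\}$, the inner part $q'$ of $q$ is a full subpartition, so $q'^{*}q'\in\CC$ by Proposition \ref{prop:intervalblockstable} and is reachable by induction; rotating $p$ produces $r=b\otimes\pi(a,a)\otimes\pi(a,a)\otimes b^{*}\in\CC$ with $b^{*}b=q'^{*}q'$, and exactly the computation of Proposition \ref{prop:commutationrelations} shows that adjoining $r$ to the smaller category amounts to the relation $t u^{aa}_{ij}=u^{aa}_{ij}t$ with $t=u_{b^{*}b}$ group-like. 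Both colours can occur as the outer block, so both commutation families of the statement appear, and by the symmetry of the hypotheses neither is automatic; moreover $t$ ranges over the whole dihedral group $\Gr(\G_N(\CC))$ as $b$ varies.

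Step 3 is then routine: any non-projective partition of $\CC$, rotated onto one line, exhibits the triviality of a non-through-block projective partition, whose one-dimensional representation is a word in $s_x$ and $s_y$ by Lemma \ref{lem:onedbistochastic}; this is a group-like relation in the sense of Definition \ref{de:grouplike}. Combining the three steps, $C(\G_N(\CC))$ is cut out of $C(B_N^{+\sharp})\ast C(B_N^{+\sharp})$ by the two commutation families together with group-like relations, which is the assertion.

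The step we expect to be genuinely delicate is the case $a\neq b$ of the induction, i.e. when the outer block of $q$ is the mixed pair $D_{xy}$. Opening it in the rotation above would pair an $x$-leg with a $y$-leg and so produce a commutation relation with $u^{x}\otimes u^{y}$, which is not among the permitted relations, or even an amalgamation-type relation reminiscent of the $O_N^{++}$ construction of Section \ref{sec:orthogonal}. The tool to avoid this is the availability of $P_{xx},P_{yy}\in\CC$: one has the equivalence $D_{xy}^{*}D_{xy}\sim P_x^{*}P_x\otimes P_y^{*}P_y$ (both non-through-block projective of colouring $xy$, hence equivalent by Lemma \ref{lem:equivalence1d}, with common one-dimensional representation $s_xs_y$), and more generally $q^{*}q$ is equivalent in $\CC$ to the decomposable partition $P_x^{*}P_x\otimes q'^{*}q'\otimes P_y^{*}P_y$ with monochromatic outer pieces. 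Replacing the mixed block by two monochromatic singletons in this way reduces the indecomposable mixed case to the already-treated decomposable one, so that no new family of relations survives and the mixed structure only enriches the dihedral group-like element $t$. We still must check that a mixed block can never be stripped down to an interval --- which would place $\pi(x,y)$ in $\CC$ and force the excluded orthogonal case --- by first removing all nested monochromatic pairs and double singletons using $D_{xx},D_{yy},P_{xx},P_{yy}$ and then invoking the global exclusion of $\pi(x,y)$. Carrying out this reduction without circularity, and confirming that it really collapses every mixed-colour contribution onto the two listed commutation families, is the main technical content of the proof.
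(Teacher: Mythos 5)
Your Steps 1--3 for monochromatic outer blocks reproduce the paper's argument: the paper sets $\DD=\CC_{x}\ast\CC_{y}$ (the full monochromatic subcategories, each of whose quantum groups is a quotient of $C(B_{N}^{+\sharp})$ by group-like or commutation relations thanks to the orthogonal classification), observes that if every projective partition of $\CC$ lies in $\DD$ then only group-like relations remain, and otherwise invokes verbatim the argument of Proposition \ref{prop:nonamalgamatednoy} to absorb the remaining projective partitions into commutation relations with $u^{xx}$ or $u^{yy}$. Up to your slightly smaller choice of base category, this part of your proposal is correct and is essentially the paper's proof.

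The genuine problem is your last paragraph. First, the case you single out as ``the main technical content'' --- an indecomposable $q^{*}q$ whose outer block is $D_{xy}$ --- cannot occur here: this proposition sits in the subsection whose standing hypothesis is that every block of size two is $D_{xx}$ or $D_{yy}$, the presence of a mixed block being precisely what defines the amalgamated case of Theorem \ref{thm:bibistochasticclassification}. Second, and more seriously, your proposed resolution of that case is incorrect. If $D_{xy}$ were a block of some partition of $\CC$, then by the extraction argument of Theorem \ref{thm:bibistochasticclassification} the category would contain a partition of the form $P_{t}\otimes\pi(x,y)\otimes\pi(x,y)\otimes P_{t}^{*}$, which encodes the twisted amalgamation $t\, u^{xx}_{ij}t^{-1}=u^{yy}_{ij}$ --- a relation identifying the two free factors that is not generated by the three listed families (for instance $B_{N}^{++}(\{1\})=B_{N}^{+\sharp}\ast_{PB_{N}^{+\sharp}}B_{N}^{+\sharp}$ satisfies all the hypotheses of the statement read literally, yet is not a quotient of the free product by commutation and group-like relations). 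Your appeal to Lemma \ref{lem:equivalence1d} only shows that the projective partitions $q^{*}q$ and $P_{x}^{*}P_{x}\otimes q^{\prime*}q'\otimes P_{y}^{*}P_{y}$ carry the same group-like element once both already lie in $\CC$; it does not allow you to replace the former by the latter as a generator, since the partition implementing the equivalence still contains the mixed block and still forces the amalgamation when added to a category. So the mixed case does not ``collapse onto the two listed commutation families''; it has to be excluded by hypothesis, which is exactly how the paper organises Section \ref{sec:bistochastic}.
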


\begin{proof}
Consider again the categories $\CC_{x}$ and $\CC_{y}$ of all partitions in $\CC$ coloured only with $x$ and $y$ respectively and set $\DD = \CC_{x}\ast\CC_{y}$. If all projective partitions of $\CC$ lie in $\DD$, then $C(\G_{N}(\CC))$ is a quotient of $C(\G_{N}(\CC_{x}))\ast C(\G_{N}(\CC_{y}))$ by group-like relations. It follows from the classification of noncrossing orthogonal easy quantum groups that each of the two factors of the free product is itself a quotient of $C(B_{N}^{+\sharp})$ by group-like relations or commutation relations, hence the result holds in that case.

Otherwise, the same argument as in the proof of Proposition \ref{prop:nonamalgamatednoy} shows that there exists $\DD\subset \CC'\subset\CC$ such that $C(\G_{N}(\CC'))$ is a quotient of $C(\G_{N}(\CC))$ by commutation relations and all projective partitions in $\CC$ lie in $\CC'$. We can then conclude as before.
\end{proof}

\begin{rem}
Once again, the statement can be made slightly more precise by using the subgroup structure of $\Gr(\G_{N}(\CC))$. The quotients will be parametrized by an integer $d$ and two pairs $(m, n)$, $(m', n')$ such that both $m$ and $m'$ divide $d$ and $\vert n\vert < m$ and $\vert n'\vert < m'$. Indeed, we have two groups of elements commuting with $u^{xx}$ and $u^{yy}$ respectively, and a common normal subgroups of elements made trivial.
\end{rem}

The last two statements are not as explicit as in the case of pair partitions. This shows that the theory of partition quantum groups is quite rich, but also raises the question of the statement of a proper classification result. We will comment more on this in Section \ref{sec:summary}.

\subsection{The amalgamated case}

We will now consider categories of noncrossing partitions $\CC$ containing at least one partition having $D_{xy}$ as a block. The idea is that this block mixes the two copies of $B_{N}^{+}$ so that the free product becomes amalgamated and the classification should be close to the case of pair partitions. The situation is however more complicated because the amalgamation can be twisted by one-dimensional representations in the sense of Definition \ref{de:twistedamalgamtion}. More precisely, consider the free product $B_{N}^{+\sharp}\ast B_{N}^{+\sharp}$. Each copy contains the subgroup $PB_{N}^{+\sharp} \simeq PO_{N-1}^{+}$ generated by the coefficients of $u^{xx}$ and $u^{yy}$ respectively and this is where twisted amalgamation takes place. 

\begin{de}
Let $K \subset \Z_{2}\ast\Z_{2}$ be a left coset. The C*-algebra of the compact quantum group $B_{N}^{++}(K)$ is defined to be the quotient of $C(B_{N}^{+\sharp}\ast B_{N}^{+\sharp})$ by the relation
\begin{equation*}
t u^{xx}_{ij} = u^{yy}_{ij}t
\end{equation*}
for all $1\leqslant i, j\leqslant \dim(u^{xx})$ and all $t\in K$.
\end{de}

By Proposition \ref{prop:commutationrelations}, $B_{N}^{++}(K)$ is a noncrossing partition quantum group. Let us denote by $x$ and $y$ the canonical generators of $\Z_{2}\ast\Z_{2}$. If $t\in \Z_{2}\ast\Z_{2}$ can be written as a word $g_{1}\cdots g_{n}$ on the generators, we denote by $P_{t}\in P(g_{1}\cdots g_{n}, \emptyset)$ the unique partition all of whose blocks are singletons. Then, the category of partitions $\CC_{B^{++}(K)}$ of $B_{N}^{++}(K)$ is generated by $P_{xx}$, $P_{yy}$ and $P_{t}\otimes \pi(x, y)\otimes\pi(x, y)\otimes P_{t}^{*}$ for all $t\in K$. If $K = \{1\}$ then we get the usual amalgamated free product. Using $B_{N}^{++}(K)$, we can complete the classification for blocks of size at most two.

\begin{thm}\label{thm:bibistochasticclassification}
Let $N\geqslant 4$ be an integer and let $\CC$ be a category of noncrossing partitions with blocks of size less than two and assume that $P_{xx}\in \CC$ and $D_{xy}$ is a block in $\CC$. Then, there exists a left coset $K\subset \Z_{2}\ast\Z_{2}$ such that $C(\G_{N}(\CC))$ is a quotient of $C(B_{N}^{++}(K))$ by group-like relations.
\end{thm}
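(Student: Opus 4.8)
The plan is to follow the template already used for Theorem \ref{thm:biorthogonalclassification} and Proposition \ref{prop:classificationwreath}: exhibit a coset $K$ together with an inclusion $\CC_{B^{++}(K)}\subseteq\CC$ such that every projective partition of $\CC$ already lies in $\CC_{B^{++}(K)}$, and then read off the remaining partitions as group-like relations. First I would pin down the base. Since $P_{xx}\in\CC$ and all blocks have size at most two, the one-colour subcategory $\CC_{x}$ of partitions coloured only by $x$ is, by the classification of orthogonal noncrossing easy quantum groups, the category of a bistochastic group; in particular $D_{xx}\in\CC$. Next, using that $D_{xy}$ is a block of some $p\in\CC$, I would extract an amalgamating non-through-block projective partition: among such $p$, rotated on one line, I choose one whose $D_{xy}$-block has minimal nesting and remove nested $x$-pairs by concatenation with $D_{xx}$, obtaining a full subpartition whose associated projective partition (via Proposition \ref{prop:intervalblockstable}) is a twist of $D_{xy}^{*}D_{xy}$. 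Transporting the relation $P_{xx}$ through the resulting equivalence between $u^{xx}$ and $u^{yy}$ then yields $P_{yy}\in\CC$, hence $D_{yy}\in\CC$, so that $\CC$ contains the free product of the categories of two copies of $B_{N}^{+\sharp}$ coloured by $x$ and $y$.

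Next I would determine the twisting coset. I collect all partitions of $\CC$ which, rotated on one line, realise a relation of the form $s_{t}u^{xx}_{ij}s_{t}^{-1}=u^{yy}_{ij}$, and let $K$ be the set of corresponding group-like elements $t$, viewed inside $\mathcal{G}(\G_{N}(\CC))$, which is generated by $s_{x},s_{y}$ and hence a quotient of $\Z_{2}\ast\Z_{2}$. By the Remark following Proposition \ref{prop:amalgamatedstability}, any two such elements differ by a group-like element commuting with $u^{xx}$, so $K$ is a left coset of the subgroup $H$ of group-like elements commuting with $u^{xx}$; this is exactly the datum entering $B_{N}^{++}(K)$. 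With this $K$, all generators $P_{xx}$, $P_{yy}$ and $P_{t}\otimes\pi(x,y)\otimes\pi(x,y)\otimes P_{t}^{*}$ (for $t\in K$) of $\CC_{B^{++}(K)}$ lie in $\CC$, whence $\CC_{B^{++}(K)}\subseteq\CC$. A pleasant feature of encoding the commuting subgroup $H$ into the coset is that the commutation relations which appeared explicitly in the non-amalgamated Proposition \ref{prop:nonamalgamatednoy} are here absorbed into the choice of $K$, leaving only group-like relations to impose afterwards.

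The heart of the argument is to show that every projective partition $p\in\CC$ lies in $\CC_{B^{++}(K)}$. By Remark \ref{rem:noncrossingdecomposition} it suffices to treat through-block projective partitions with a single through-block and non-through-block ones with connected endpoints; and by an induction on the number of blocks modelled on Lemma \ref{lem:non-through-blockorthogonal}, it suffices to reduce the non-through-block case $p=q^{*}q$ with $q$ on one line and endpoints connected. Rotating $p$ produces a partition of the form $b\otimes r\otimes b^{*}$, where $r$ is $\pi(a,a)\otimes\pi(a,a)$ when the outer block is $D_{aa}$ and $\pi(x,y)\otimes\pi(x,y)$ when it is $D_{xy}$, and Proposition \ref{prop:intervalblockstable} gives $b^{*}b\in\CC$ with fewer blocks, hence $b^{*}b\in\CC_{B^{++}(K)}$ by induction. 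One then uses the group-like element $u_{b^{*}b}$ together with $D_{xx},D_{yy},P_{xx},P_{yy}$ to normalise neighbouring equal colours, and the twisting generators to move $x$-blocks to $y$-blocks, in order to recognise $p$ as a horizontal concatenation of generators of $\CC_{B^{++}(K)}$. The main obstacle is precisely this bookkeeping: one must check that the twist attached to any $D_{xy}$-outer block is consistent with $K$ (so that no twisting element outside $K$ is ever produced), while simultaneously tracking the $H$-commutation so that the single-colour sides, which may a priori realise $B^{+}$, $B^{+\sharp}$ or $B^{+\prime}$, are correctly reproduced inside $B_{N}^{++}(K)$.

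Finally, with $\CC_{B^{++}(K)}\subseteq\CC$ and all projective partitions of $\CC$ lying in $\CC_{B^{++}(K)}$, any remaining partition of $\CC$, rotated on one line, is equivalent to a non-through-block projective partition whose associated one-dimensional representation is a group-like element of $B_{N}^{++}(K)$; by Lemma \ref{lem:nonthroughblockstructure} and Proposition \ref{prop:1dstability}, adding such a partition amounts to setting that group-like element trivial. Hence $C(\G_{N}(\CC))$ is obtained from $C(B_{N}^{++}(K))$ by group-like relations, as claimed.
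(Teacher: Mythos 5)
Your overall architecture coincides with the paper's proof: establish $\CC_{B^{++}(K)}\subset\CC$, show that every projective partition of $\CC$ lies in $\CC_{B^{++}(K')}$ for a suitable (possibly enlarged) coset, and read off the remaining partitions as group-like relations. Your observation that the commutation relations of the non-amalgamated case are absorbed into the choice of the coset is exactly the mechanism the paper exploits. (Your route to $P_{yy}$ via transporting the invariant vector of $u^{xx}$ through the twisted equivalence also works, though the paper gets it more directly by capping the $x$-leg of the $D_{xy}$ block with a singleton coming from $P_{xx}$ and applying Proposition \ref{prop:intervalblockstable} to the resulting full $y$-singleton.)

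The genuine gap is the step you yourself label ``the main obstacle is precisely this bookkeeping'': that is not a side issue but the actual content of the proof, and you leave it unexecuted. The only case that does not follow formally from your maximal definition of $K$ is a non-through-block projective partition $q^{*}q$ whose connected endpoints carry the \emph{same} colour, say $x$. Rotating it gives $s=b\otimes\pi(x,x)\otimes\pi(x,x)\otimes\overline{b}^{*}$, which encodes a commutation relation between $u_{b^{*}b}$ and $u^{xx}$ and is \emph{not} of the twisted-amalgamation shape, so it does not by itself produce an element of your $K$. The paper closes this by an explicit composition: choosing $t\in K$ realized by a partition $c^{*}c$, one has $s'=\overline{c}\otimes\pi(x,y)\otimes\pi(x,y)\otimes c^{*}\in\CC_{B^{++}(K)}$, and rotating $s's$ yields $s''=c\otimes b\otimes\pi(x,y)\otimes\pi(x,y)\otimes c^{*}\otimes b^{*}$, which realizes the twist by $t\,u_{b^{*}b}$; conversely, if both $t$ and $t\,u_{b^{*}b}$ lie in the coset, then $s$ belongs to the corresponding category. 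This two-way equivalence is what turns ``absorption into the coset'' from a slogan into a theorem; without it, the induction in your third paragraph does not close, since you never verify that the same-coloured-endpoint partitions of $\CC$ are accounted for by $\CC_{B^{++}(K')}$ for some enlargement $K'$ of $K$.
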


\begin{proof}
\noindent\textbf{Step 1.} We first prove that $\CC_{B^{++}(K)}\subset \CC$ for some $K$. Let $p\in \CC$ be a partition containing $D_{xy}$ as a block and note that concatenating with $P_{xx}$ we can produce a $y$-singleton which is a full subpartition, hence $P_{yy}\in \CC$ by Proposition \ref{prop:intervalblockstable}. Consider now a full subpartition  $q$ of $p$ delimited by $D_{xy}$. Again by Proposition \ref{prop:intervalblockstable}, $q^{*}q\in \CC$ and rotating it gives a partition of the form $b\otimes\pi(x, y)\otimes \pi(x, y)\otimes b^{*}\in \CC$. If $b$ has colouring $g_{1},\cdots, g_{n}$ and $t = g_{1}\cdots g_{n}\in \Z_{2}\ast \Z_{2}$, then
\begin{equation*}
\left[\pi(y, y)\otimes\pi(y, y)\otimes (P_{t}^{*}P_{t}))\right](q^{*}q)\left[(P_{t}^{*}P_{t})\otimes\pi(x, x)\otimes\pi(x, x)\right]\in \CC
\end{equation*}
equals $P_{t}\otimes\pi(x, y)\otimes \pi(x, y)\otimes P_{t}^{*}$ and the claim is proved.\\

\noindent\textbf{Step 2.} Our next step is to prove that for any projective partition $p\in \CC$, there exists $K\subset K'$ such that $p\in \CC_{B^{++}(K')}\subset\CC$. To see this, we can as before focus on a non-through-block projective partition of the form $q^{*}q$ where the endpoints of $q$ are connected. If the colours of these endpoints are $x$ and $y$, then \textbf{Step 1.} of this proof shows that $q^{*}q\in \CC_{B^{++}(K')}$ for a suitable $K'$. If the colours of the endpoints are the same, say $x$, then rotating $q^{*}q$ yields the partition
\begin{equation*}
s = b\otimes \pi(x, x)\otimes \pi(x, x)\otimes \overline{b}^{*}
\end{equation*}
for some partition $b$ lying on one line. Let $t\in K$ and let $c^{*}c$ be a non-through-block projective partition such that $u_{c^{*}c} = t$. Then, $s' = \overline{c}\otimes\pi(x, y)\otimes\pi(x, y)\otimes c^{*}\in \CC_{B^{++}(K)}$ and rotating $s's$ yields
\begin{equation*}
s'' = c\otimes b\otimes \pi(x, y)\otimes\pi(x, y)\otimes c^{*}\otimes b^{*}.
\end{equation*}
Adding this partition to $\CC$ is the same as adding $u_{(c^{*}c)\otimes (b^{*}b)} = u_{c^{*}c}\otimes u_{b^{*}b}$ to $K$. A similar argument shows that reciprocally, if $s', s''\in K$ then $s\in \CC$. Summing up, we can enlarge $K$ so that $s\in \CC_{B^{++}(K')}$.\\

\noindent\textbf{Step 3.} If we consider now an arbitrary partition in $\CC$, rotating it on one line gives a relation making a one-dimensional representation trivial, i.e.~a group-like relation and the proof is complete.
\end{proof}

\section{Classification III : even partitions}\label{sec:hyperoctahedral}

When classifying orthogonal easy quantum groups, four families appear : the orthogonal one, the bistochastic one, the hyperoctahedral one and the symmetric one. The last two families are very similar since they are the two simplest examples of free wreath products. Similarly here, they will both appear through the free wreath products of pairs construction introduced in Subsection \ref{subsec:wreathpairs}. However, there is a fundamental distinction which will prove practical for our study : the presence or the absence of a block of odd size.

We will now assume that $\CC$ contains only partitions with blocks of even size. We will moreover assume that there is a partition $p$ with a block of size at least $4$. In this context, things are different from the first two cases and free wreath products are needed. For $d\in \N$, we will denote by $\Gamma_{d}$ the quotient of the group $\Z_{2}\ast\Z_{2}$ by the relation $(xy)^{d} = e$, where $x$ and $y$ are the two generators. By convention, we set $\Gamma_{0} = \Z_{2}\ast\Z_{2}$. It is easy to check that the category of partitions generated by $\pi((xy)^{d}, \emptyset)$ is exactly $\CC_{\Gamma_{d}, \{x, y\}}$. For simplicity, we will omit the generating set from now on since it is always $\{x, y\}$. To apply Proposition \ref{prop:classificationwreath} we need to know that $\CC$ already contains the category of partitions of a free wreath product of pair. Let us start with a simple lemma.

\begin{lem}\label{lem:hyperoctahedralinterval}
Let $\CC$ be a category of noncrossing partitions such that all blocks have even size. If $\CC$ contains a partition with a block of size at least four, then $\pi(xx, xx)\in \CC$ or $\pi(yy, yy)\in \CC$.
\end{lem}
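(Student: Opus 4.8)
The plan is to rotate $p$ onto a single line and whittle its size-$\geq 4$ block down to a single interval block on four points, and then to read off $\pi(xx,xx)$ or $\pi(yy,yy)$ from a short analysis of the colours of that block. Two elementary facts are used throughout. First, since $\CC$ contains the identity partitions $\pi(x,x)$ and $\pi(y,y)$, rotating both their points onto one line shows that the caps $D_{xx}=\pi(xx,\emptyset)$ and $D_{yy}=\pi(yy,\emptyset)$ always lie in $\CC$; composing with these lets me delete any pair of neighbouring equally-coloured points. Second, Proposition~\ref{prop:intervalblockstable} lets me isolate any full (in particular any interval) subpartition as a partition in $\CC$.

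First I would reduce to the case where $p$ is a single block of size exactly four lying on one line. Choosing $p$ with a block $b$ of size $\geq 4$ of minimal span, every block nested strictly inside $b$ is a pair; if one such nested pair is $D_{xy}$, then $\pi(x,y)\in\CC$, the two colours become identified, and the statement follows from the one-colour (hyperoctahedral) classification \cite{weber2012classification}. Otherwise the nested pairs are $D_{xx}$ or $D_{yy}$, and I delete them by capping, turning $b$ into an interval; applying Proposition~\ref{prop:intervalblockstable} to this interval then isolates it from the remaining blocks of $p$, giving the one-line block itself in $\CC$. It remains to bring an interval block of even size $\geq 4$ down to four points, which is done by capping neighbouring equally-coloured points whenever they occur, using the decomposition of Remark~\ref{rem:noncrossingdecomposition} to keep track of the block structure.

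The hard part is precisely when no two neighbouring points of the block share a colour, i.e.\ the block is alternating $xyxy\cdots$, for then no cap applies. To break the alternation I would tensor the block $G$ with its conjugate $\overline{G}$ (available since $\CC$ is stable under conjugation): at the junction of $G$ and $\overline{G}$ two neighbouring points of equal colour appear, and capping them, together with one further inner cap, collapses the configuration to a single four-point block of the shape $\pi(xyyx,\emptyset)$, that is, one carrying two neighbouring monochromatic pairs. This manoeuvre is the crux of the argument and is what rescues the otherwise obstructed alternating case.

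Finally I would conclude by the colour analysis of the resulting four-point block. If it is $\pi(xxxx,\emptyset)$ or $\pi(yyyy,\emptyset)$, a rotation gives $\pi(xx,xx)$ or $\pi(yy,yy)$ directly. If it carries two neighbouring monochromatic pairs, say with colouring $xxyy$ (the shape $\pi(xyyx,\emptyset)$ produced above being of this type after a cyclic rotation), I rotate it to the two-line partition $\pi(xx,yy)$ and compute $\pi(xx,yy)^{*}\pi(xx,yy)=\pi(xx,xx)$, since the matched middle row $yy$ merges everything into a single through-block with no loops. The remaining mixed patterns (of $xxxy$-type) reduce, after capping their monochromatic pair, to the colour-identifying case $D_{xy}\in\CC$ already treated, which finishes the proof.
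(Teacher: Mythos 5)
Your overall strategy (isolate the big block, whittle it down to four points, read off the colours) is the right one, and your conjugate manoeuvre $G\otimes\overline{G}$ in the alternating case is sound --- indeed it is a disguised form of what the paper does, since the one-line partition $G\otimes\overline{G}$ is just a rotated version of $G^{*}G$, and the junction trick actually works for \emph{every} colouring $w$, because positions $n$ and $n+1$ of $w\overline{w}$ always carry the same (self-inverse) colour. The genuine gap is in your treatment of two-coloured pairs, and it infects two of your cases. Proposition \ref{prop:intervalblockstable} does not put the interval subpartition itself into $\CC$; it only puts $q^{*}q$ there. So from a nested interval pair $D_{xy}$ you only obtain $D_{xy}^{*}D_{xy}=\beta(xy,xy)\in\CC$, which is strictly weaker than $\pi(x,y)\in\CC$: the category $\CC_{O^{++}}$ contains $\beta(xy,xy)$ but not $\pi(x,y)$ (the paper notes exactly this when observing that $u_{D_{xy}^{*}D_{xy}}$ is a non-trivial one-dimensional representation). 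Hence your claim that a nested $D_{xy}$ identifies the two colours is false; and since capping such a pair away would itself require $D_{xy}\in\CC$, your reduction of the big block to an interval breaks down whenever a two-coloured pair is nested inside it. The same confusion reappears in your endgame: you never have $\pi(xxxx,\emptyset)$ or the $xxxy$-coloured block itself in $\CC$, only its doubled, disconnected version $\beta(w,w)$, so ``a rotation gives $\pi(xx,xx)$ directly'' and ``reduces to the case $D_{xy}\in\CC$'' are both unjustified as written.

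Both problems are avoidable, and the paper's proof shows how. It takes the minimal \emph{full} subpartition $q$ containing the whole block --- nested $D_{xy}$'s and all --- and works with $q^{*}q$; the two mirror copies of the block are merged by rotating and capping equally coloured neighbours (your junction trick, applied uniformly rather than only in the alternating case), the merged through-block is reduced to the form $\pi(a_{1}a_{1},a_{2}a_{2})$, and the nested junk is never deleted along the way: after rotation it sits entirely on one row, so in the final composition $r^{*}r$ it lands in the middle row, becomes loops, and disappears, leaving exactly $\pi(a_{1}a_{1},a_{1}a_{1})$. If you run your $G\otimes\overline{G}$ argument for arbitrary colourings and carry the nested pairs along to be killed by loops at the last step, instead of trying to remove them first and instead of invoking $\pi(x,y)\in\CC$, your proof can be repaired; as it stands, the cases involving $D_{xy}$ are not proved.
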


\begin{proof}
Let $p\in \CC$ be a partition with a block of size at least $4$ and let $q$ be a minimal full subpartition containing all the points of this block, so that $q^{*}q\in \CC$ by Proposition \ref{prop:intervalblockstable}. We therefore have a through-block and between the points on each row, a non-through-block partition. As in the proof of \cite[Lem 4.2]{freslon2013fusion}, we can reduce $q^{*}q$ by concatenations until the through-block has only four points left while still being in $\CC$. Rotating it then yields
\begin{equation*}
r = b\otimes \pi(a_{1}a_{1}, a_{2}a_{2})\otimes \overline{b}^{*}\in \CC
\end{equation*}
for some $a_{1}, a_{2}\in \A$ and a non-through block partition $b$. Then, $\pi(a_{1}a_{1}, a_{1}a_{1}) = r^{*}r\in \CC$ and the proof is complete.
\end{proof}

With this in hand, we can already rule out the case where no partition has two colours.

\begin{lem}\label{lem:amalgamatedhyperoctahedral}
Let $\CC$ be a category of noncrossing partitions such that all blocks have even size and containing a block of size at least four. If no partition in $\CC$ has two colours, then it is a free product.
\end{lem}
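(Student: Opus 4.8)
\emph{Reading of the hypothesis and overall strategy.} I understand ``no partition in $\CC$ has two colours'' to mean that every block of every partition of $\CC$ is monochromatic, so that $\CC$ splits colourwise. Let $\CC_{a}$, for $a\in\{x,y\}$, denote the category of all partitions of $\CC$ whose points are coloured only by $a$. These are one-coloured categories, and since $\CC$ is a category the free product $\CC_{x}\ast\CC_{y}$ — which by definition is the category generated by $\CC_{x}$ and $\CC_{y}$ — is contained in $\CC$. The whole content of the lemma is therefore the reverse inclusion $\CC\subseteq\CC_{x}\ast\CC_{y}$, which I would prove by induction on the number of blocks of a partition $p\in\CC$. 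Since $\CC_{x}\ast\CC_{y}$ is stable under rotation, I may rotate $p$ onto a single line and assume it lies on one line.

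\emph{The crucial preliminary step.} Before running the induction I would show that every \emph{interval} $b$ of a partition of $\CC$ — a block of neighbouring points, automatically monochromatic, say of colour $a$ and size $2m$ — already belongs to $\CC$. This is the only place where something more than the pair-partition argument of Theorem \ref{thm:biorthogonalclassification} is needed, because here an interval may have size larger than two while $\CC$ need not contain every even monochromatic partition. I would argue as follows. First, the pair $\pi(aa,\emptyset)$ lies in $\CC$, being a rotated version of the $a$-identity partition $\pi(a,a)$; hence so does its tensor power $\pi(aa,\emptyset)^{\otimes m}$. Second, $b$ is a full subpartition lying on one line, so Proposition \ref{prop:intervalblockstable} gives $b^{*}b\in\CC$, and $b^{*}b$ is the projective partition whose upper row is the single block $b$ and whose lower row is a disjoint single block of the same size. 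Capping the $2m$ lower points of $b^{*}b$ with $\pi(aa,\emptyset)^{\otimes m}$ annihilates the lower block (up to a power of $N$ coming from the resulting loop) and leaves exactly $b$; in other words $b=\pi(aa,\emptyset)^{\otimes m}(b^{*}b)\in\CC$.

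\emph{The induction.} With this in hand the argument mirrors Step 1 of the proof of Theorem \ref{thm:biorthogonalclassification}. If $p$ has a single block it is monochromatic of some colour $a$, hence $p\in\CC_{a}\subseteq\CC_{x}\ast\CC_{y}$. If $p$ has at least two blocks, noncrossingness provides an interval $b$, and after a rotation I may write $p=b\otimes q$, where $q\in\CC$ has one block fewer. By the preliminary step $b\in\CC$, so the standard capping argument (composing $p$ with the reflection of $b$ tensored with identities) yields $q\in\CC$, and the induction hypothesis gives $q\in\CC_{x}\ast\CC_{y}$. Since $b$ is monochromatic it lies in $\CC_{a}\subseteq\CC_{x}\ast\CC_{y}$, whence $p=b\otimes q\in\CC_{x}\ast\CC_{y}$. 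This establishes $\CC=\CC_{x}\ast\CC_{y}$, so $\G_{N}(\CC)=\G_{N}(\CC_{x})\ast\G_{N}(\CC_{y})$ is a free product.

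\emph{Where the difficulty lies.} The genuinely delicate point is the preliminary step, that interval blocks of size greater than two belong to $\CC$. A priori such a block $b$ could satisfy $b^{*}b\in\CC$ while $b\notin\CC$, i.e.\ it could carry a \emph{nontrivial} one-dimensional representation $u_{b^{*}b}$; the capping identity $b=\pi(aa,\emptyset)^{\otimes m}(b^{*}b)$ is precisely what rules this out, by writing $b$ as a composition of objects already known to lie in $\CC$ (note that it uses neither the block of size at least four nor Lemma \ref{lem:hyperoctahedralinterval}, which only fix the context of this section). Everything else is routine bookkeeping with rotations and horizontal concatenations.
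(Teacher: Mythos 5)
Your proof is correct. It follows the same overall template as the paper's argument --- an induction on the number of blocks that peels off an interval, as in \textbf{Step 1.} of the proof of Theorem \ref{thm:biorthogonalclassification} --- but the way you justify that the interval can be peeled off is genuinely different. The paper first invokes Lemma \ref{lem:hyperoctahedralinterval} and the classification of orthogonal easy quantum groups to identify $\CC_{x}$ and $\CC_{y}$ explicitly (up to relabelling, $\CC_{x}=\CC_{H_{N}^{+}}$ and $\CC_{y}$ is either $\CC_{O_{N}^{+}}$ or $\CC_{H_{N}^{+}}$), and then sandwiches $\CC$ between this free product and the ambient category of all partitions with monochromatic even blocks (with $y$-blocks of size two in the first case), which the induction identifies with that same free product. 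You instead prove directly that any monochromatic even interval $b$ of a partition of $\CC$ already lies in $\CC$, via the capping identity $b=\pi(aa,\emptyset)^{\otimes m}(b^{*}b)$; this is the decisive observation, and it is valid precisely because $a$ is self-inverse, so that $\pi(aa,\emptyset)$ is a rotated identity partition and the single lower block of $b^{*}b$ is swallowed into one loop (note that this is exactly the step that fails for a two-coloured block such as $D_{xy}$, consistently with $s\neq 1$ in $O_{N}^{++}$). Your route buys two things: it does not use the hypothesis that $\CC$ contains a block of size at least four, so it proves the statement for any category with monochromatic even blocks and subsumes the pair-partition case, and it avoids both Lemma \ref{lem:hyperoctahedralinterval} and the classification of orthogonal categories. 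What it does not give, and what the paper's version provides for free, is the explicit identification of the two factors of the free product --- but the lemma as stated does not require this.
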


\begin{proof}
From Lemma \ref{lem:hyperoctahedralinterval} we know that up to relabelling the colours, all even partitions coloured by $x$ are in $\CC$. As for $y$, we have two possibilities : either only pair partitions are in $\CC$, or all even partitions are in $\CC$. In the first case we have $\CC_{H_{N}^{+}}\ast\CC_{O_{N}^{+}}\subset\CC$ and in the second case $\CC_{H_{N}^{+}}\ast\CC_{H_{N}^{+}}\subset\CC$. Let us consider the category of all partitions such that all blocks have only one colour and all blocks have even size (resp. and all blocks coloured by $y$ have size two). In both cases, an easy induction as in \textbf{Step 1.} of the proof of Theorem \ref{thm:biorthogonalclassification} shows that this category is equal to $\CC_{H_{N}^{+}}\ast\CC_{H_{N}^{+}}$ (resp. to $\CC_{H_{N}^{+}}\ast\CC_{O_{N}^{+}})$ and since it contains $\CC$, we have equality.
\end{proof}

We are now ready to prove that the quantum groups that we are trying to classify are all quotients of the aforementioned free wreath products of pairs.

\begin{prop}\label{prop:firstquotienthyperoctahedral}
Let $\CC$ be a category of noncrossing partitions such that all blocks have even size and which is not a free product. Then, $\CC_{\Gamma_{d}}\subset \CC$ for some integer $d\in \N$.
\end{prop}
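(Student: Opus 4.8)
The plan is to realise $\CC$ as the two-coloured hyperoctahedral wreath base enriched by a single ``rotation'' relation, whose exponent will be the integer $d$.

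First I would fix the base. By Lemma~\ref{lem:hyperoctahedralinterval} we may assume, after relabelling the colours, that $\pi(xx,xx)\in\CC$; in particular the cap $D_{xx}\in\CC$. Since $\CC$ is not a free product, the contrapositive of Lemma~\ref{lem:amalgamatedhyperoctahedral} yields a partition in $\CC$ carrying a block $b$ that uses both colours. Such a block cannot have size two, for a two-coloured block of size two is $D_{xy}$, which rotates to $\pi(x,y)$ and would force $u^{x}$ to be equivalent to $u^{y}$, a situation excluded at the very start of the classification. Hence $b$ has size at least four. Isolating $b$ through a minimal full subpartition and applying Proposition~\ref{prop:intervalblockstable}, then rotating, produces a \emph{through}-block mixing the two colours; using $D_{xx}$ to bring two $x$-legs together, rotating to the form $\pi(xx,yy)$ and composing with its adjoint transports the through-block structure onto the colour $y$, giving $\pi(yy,yy)\in\CC$ and hence also $D_{yy}\in\CC$. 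At this stage $\CC$ contains $\CC_{H_{N}^{+}}\ast\CC_{H_{N}^{+}}$ and both caps $D_{xx},D_{yy}$ are at our disposal.

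Next I would read off the relation group-theoretically. Writing $\Z_{2}\ast\Z_{2}=\langle x,y\rangle$, attach to every one-block non-through-block partition $\pi(w,\emptyset)\in\CC$ the element $[w]\in\Z_{2}\ast\Z_{2}$ given by the product of its colours. The elementary identities from Step~3 of the proof of Proposition~\ref{prop:classificationwreath}, namely $[\beta(w,\emptyset)\otimes\beta(w',\emptyset)]\pi(w.w',w.w')=\beta(w.w',\emptyset)$ and $\overline{\pi(w,\emptyset)}^{\,*}=\pi(\overline{w},\emptyset)$, show that $\Theta=\{[w]:\pi(w,\emptyset)\in\CC\}$ is a subgroup of $\Z_{2}\ast\Z_{2}$. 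The crucial point is now the even-size hypothesis: every block of every partition of $\CC$ has even size, so each word $w$ above has even length and therefore $[w]$ lies in the index-two ``rotation'' subgroup $\langle xy\rangle\cong\Z$. Thus $\Theta$ is a subgroup of $\Z$, i.e. $\Theta=\langle(xy)^{d}\rangle$ for a unique $d\in\N$ (with $d=0$ when $\Theta=\{e\}$); note that $d=1$ cannot occur, since $(xy)\in\Theta$ would again produce the excluded partition $\pi(x,y)$.

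Finally I would conclude. Using $D_{xx}$ and $D_{yy}$ to cancel adjacent equal colours, any mixed block reduces up to equivalence to its alternating reduced word, so that $\pi((xy)^{m},\emptyset)\in\CC$ whenever a block has colour product $(xy)^{m}$; in particular the generator $(xy)^{d}$ of $\Theta$ is realised by an actual partition $\pi((xy)^{d},\emptyset)\in\CC$. Since adding $\beta_{\gamma}$ is the same as adding $\beta_{\lambda}$ for every $\lambda$ in the subgroup generated by $\gamma$, the partition $\pi((xy)^{d},\emptyset)$ together with the hyperoctahedral base already present generates exactly $\CC_{\Gamma_{d}}$, by the generation fact recorded just before the statement; hence $\CC_{\Gamma_{d}}\subseteq\CC$. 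I expect the main obstacle to be the colour-transport step producing $\pi(yy,yy)$ and $D_{yy}$ (which is a genuine but fiddly bookkeeping argument depending on the pattern of the mixed block) and, more importantly, the verification that the realised relations fill out the \emph{rotation} subgroup $\langle(xy)^{d}\rangle$ rather than a subgroup containing a reflection --- this is precisely where the even-size hypothesis is indispensable and is what makes the relevant quotient the dihedral group $\Gamma_{d}$. The degenerate case $d=0$ requires a separate check that the hyperoctahedral base together with one trivial-product mixed block already generates $\CC_{\Gamma_{0}}=\CC_{\Z_{2}\ast\Z_{2}}$.
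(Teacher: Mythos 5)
There is a genuine gap at the very first step of your case analysis. You rule out the possibility that the two-coloured block $b$ has size two by arguing that ``a two-coloured block of size two is $D_{xy}$, which rotates to $\pi(x,y)$ and would force $u^{x}$ to be equivalent to $u^{y}$''. This confuses a block \emph{of} a partition in $\CC$ with a partition \emph{in} $\CC$: the rotation argument only applies if $D_{xy}$ itself belongs to $\CC$, and a block of $p\in\CC$ need not belong to $\CC$ unless $\CC$ is block-stable --- which, by the theorem recalled in Subsection \ref{subsec:oned}, happens exactly when $\G_{N}(\CC)$ is free, i.e.\ precisely the case that is \emph{not} under consideration here. Concretely, $\beta(xy,xy)=D_{xy}^{*}D_{xy}$ lies in $\CC_{\Gamma_{d},\Lambda_{1}}$ and has a two-coloured block of size two, yet $\pi(x,y)\notin\CC_{\Gamma_{d},\Lambda_{1}}$ for $d\neq 1$. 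So the situation where every two-coloured block is a pair is a genuine case, and it is exactly the second case treated in the paper's proof: there one only gets $D_{xy}^{*}D_{xy}\in\CC$ from Proposition \ref{prop:intervalblockstable}, whose rotation is $\pi(x,y)\otimes\pi(x,y)$ (not $\pi(x,y)$), and one must then compose with $\pi(xx,xx)$ to reach $\pi(xx,yy)$ and hence $\pi(xy,xy)$. Your proposal never produces $\pi(xy,xy)$ in this case, and without it the inclusion $\CC_{\Gamma_{d}}\subset\CC$ fails, since $\pi(xy,xy)\in\CC_{\Gamma_{d}}$ for every $d$ while it does not lie in $\CC_{H_{N}^{+}}\ast\CC_{H_{N}^{+}}$.

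Apart from this, your route (hyperoctahedral base from Lemma \ref{lem:hyperoctahedralinterval}, existence of a mixed block from the non-free-product hypothesis via Lemma \ref{lem:amalgamatedhyperoctahedral}, then reading off $d$ from the subgroup of $\Z_{2}\ast\Z_{2}$ realised by one-line one-block partitions) matches the paper's, and the observation that the even-size hypothesis forces this subgroup into $\langle xy\rangle$ is correct and is indeed the reason only the groups $\Gamma_{d}$ and the subgroups $\Lambda_{n}$ occur in this section. Two smaller points: the generation of $\CC_{\Gamma_{d}}$ at the end needs the mixed four-block $\pi(xy,xy)$ as part of the ``base'' (the free product of two copies of $\CC_{H_{N}^{+}}$ together with $\pi((xy)^{d},\emptyset)$ alone does not suffice, and your deferred case $d=0$ is exactly where this shows); and note that the paper simply sets $d=\min\{k:\beta((xy)^{k},\emptyset)\in\CC\}$ once $\pi(xy,xy),\pi(xx,xx)\in\CC$ are secured, with no need for the full subgroup bookkeeping.
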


\begin{proof}
We first prove that $\pi(xy, xy)\in \CC$ by dividing into two cases. First assume that there is a partition $p\in \CC$ with an interval $b$ of size at least four with two colours. Then, $b^{*}b\in \CC$ by Proposition \ref{prop:intervalblockstable} and by the same technique as in the proof of Lemma \ref{lem:hyperoctahedralinterval} we can end up with $\pi(xy, xy)\in \CC$. Assume now that all intervals of size at least four have only one colour. Then, they can be removed by concatenation with $D_{xx}$ or $D_{yy}$ and since $\CC$ contains a block with two colours, we deduce that $D_{xy}$ is an interval. Hence, $D_{xy}^{*}D_{xy}\in \CC$ by Proposition \ref{prop:intervalblockstable}. Rotating it gives $\pi(x, y)\otimes \pi(x, y)\in \CC$ so that
\begin{equation*}
\pi(xx, yy) = \left[\pi(x, y)\otimes \pi(x, y)\right]\pi(xx, xx)\in \CC
\end{equation*}
and rotating $\pi(xx, yy)$ yields $\pi(xy, xy)\in \CC$. Setting $d = \min\{k, \beta((xy)^{k}, \emptyset)\in \CC\}$, we can now conclude that $\CC_{\Gamma_{d}}\subset \CC$.
\end{proof}

By Proposition \ref{prop:classificationwreath}, we will get free wreath products of pairs. The only thing that we do not know is what kind of subgroup $\Lambda\subset\Z_{2}\ast\Z_{2}$ can appear for the one-dimensional representations. To find it, let us set $n = \min\{k\geqslant 1, \beta((xy)^{k}, (xy)^{k})\in \CC\}$, where by convention $n=0$ if this set is empty.

\begin{lem}\label{lem:cyclic1dhyperoctahedral}
Assume that $n\geqslant 1$. For $1\leqslant i\leqslant N$, set
\begin{equation*}
s = \sum_{k=1}^{N} (u_{ik}^{x}u_{ik}^{y})^{n} \in C(\G_{N}(\CC)).
\end{equation*}
Then, $s$ is a group-like element which does not depend on $k$. Moreover,
\begin{itemize}
\item $s$ generates the group of group-like elements of $\G$,
\item $n$ divides $d$ and $s$ has order $d/n$ (this statement being empty if $d = 0$).
\end{itemize}
\end{lem}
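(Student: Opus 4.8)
The plan is to mirror the proof of Lemma~\ref{lem:cyclic1d}, replacing the elementary pair block by the larger block $\beta((xy)^{n},(xy)^{n})$, and to organise the argument in three parts: identification of $s$, generation, and the order computation.

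First I would establish that $s$ is the group-like element attached to the non-through-block projective partition $p=\beta((xy)^{n},(xy)^{n})$, which lies in $\CC$ by the definition of $n$. Writing $w=(xy)^{n}$ and $\xi=\sum_{j=1}^{N}e_{j}^{\otimes 2n}$, one checks that $T_{p}=\xi\xi^{*}$, so that the intertwiner relation $T_{p}\in\Mor(u^{\otimes w},u^{\otimes w})$ unwinds, exactly as in Lemma~\ref{lem:cyclic1d}, into an identity between the generators. Specialising this identity to constant multi-indices shows on the one hand that the row sum $\sum_{k}(u^{x}_{ik}u^{y}_{ik})^{n}$ equals the corresponding column sum, hence is independent of $i$, and on the other hand that $\frac{1}{N}(\id\otimes\xi^{*})u^{\otimes w}(\id\otimes\xi)=s$. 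Thus $s$ is the coefficient of the one-dimensional subrepresentation of $u^{\otimes w}$ cut out by $p$, that is $s=u_{\beta((xy)^{n},(xy)^{n})}$, and in particular it is group-like (alternatively $\D(s)=s\otimes s$ can be verified by hand).

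Next I would prove that $s$ generates $\Gr(\G_{N}(\CC))$. Every group-like element is $u_{q^{*}q}$ for some $q$ lying on one line with even blocks, and since $D_{xx}=\pi(xx,\emptyset)$ and $D_{yy}=\pi(yy,\emptyset)$ belong to $\CC_{\Gamma_{d}}\subset\CC$, concatenating neighbouring equal colours lets me replace $q^{*}q$ by an equivalent projective partition $p'$ with alternating colouring, necessarily $(xy)^{m}$ or $(yx)^{m}$ by parity. Using that $\pi((xy)^{m},(xy)^{m})\in\CC_{\Gamma_{d}}$, the product $\pi((xy)^{m},(xy)^{m})\,p'\,\pi((xy)^{m},(xy)^{m})$ equals $\beta((xy)^{m},(xy)^{m})$ (no through-block of $p'$ bridges the two rows), so this partition lies in $\CC$; the multiplicativity $u_{\beta_{\gamma}}u_{\beta_{\gamma'}}=u_{\beta_{\gamma\gamma'}}$ and the conjugation $\overline{\beta_{\gamma}}=\beta_{\gamma^{-1}}$ recorded in Subsection~\ref{subsec:wreathpairs} then show that the set of $k$ with $\beta((xy)^{k},(xy)^{k})\in\CC$ is the subgroup $n\Z$, whence $n\mid m$. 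Finally Lemma~\ref{lem:equivalence1d} gives $p'\sim\beta((xy)^{m},(xy)^{m})$, so $u_{q^{*}q}=u_{p'}=s^{m/n}$ (or $s^{-m/n}$ in the $(yx)^{m}$ case), and every group-like element is a power of $s$.

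Finally, for the divisibility and the order (assuming $d\geq 1$), I would note that $\beta((xy)^{d},(xy)^{d})=b^{*}b$ with $b=\pi((xy)^{d},\emptyset)\in\CC_{\Gamma_{d}}$, so $\beta((xy)^{d},(xy)^{d})\in\CC$ and the subgroup description above forces $n\mid d$. By the lemma characterising triviality of $u_{p}$, one has $s^{k}=u_{\beta((xy)^{kn},(xy)^{kn})}=1$ if and only if the upper row $\pi((xy)^{kn},\emptyset)$ lies in $\CC$, that is if and only if $d\mid kn$; writing $d=n(d/n)$ this happens exactly when $(d/n)\mid k$, so $s$ has order $d/n$. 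I expect the main obstacle to be the generation step: the combinatorial reduction to alternating colouring must be combined carefully with the group-theoretic bookkeeping in $\langle xy\rangle$ so that the minimality of $n$ translates into the subgroup identity $\{k:\beta((xy)^{k},(xy)^{k})\in\CC\}=n\Z$, which is precisely what makes both the generation and the order statement work.
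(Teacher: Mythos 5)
Your proof is correct. The first assertion (identifying $s=u_{\beta((xy)^{n},(xy)^{n})}$ by unwinding the intertwiner relation for $T_{p}=\xi\xi^{*}$) is exactly what the paper does, deferring to Lemma \ref{lem:cyclic1d}. For the generation and order statements, however, you take a genuinely different route. The paper stays at the level of the alternating projective partition $p'$: it first shows $m\geqslant n$ by extracting an interval of $q$ and contradicting the minimality of $n$, then performs the Euclidean division $m=n\ell+r$ and exhibits an explicit composition realizing an equivalence $p'\sim\beta((xy)^{n},(xy)^{n})^{\otimes\ell}$, the remainder being excluded by minimality again; the same division argument gives $n\mid d$, and the order computation is left essentially implicit. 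You instead collapse $p'$ to $\beta((xy)^{m},(xy)^{m})$ by sandwiching with $\pi((xy)^{m},(xy)^{m})$ and then argue entirely in the group of exponents, using the multiplicativity and conjugation identities of Subsection \ref{subsec:wreathpairs} to see that $\{k:\beta((xy)^{k},(xy)^{k})\in\CC\}$ is the positive part of a subgroup, whence $n\mid m$ and $n\mid d$ in one stroke; you also make the order explicit via the triviality criterion ($u_{p}=1$ iff the upper row of $p$ lies in $\CC$) combined with $\{j:\pi((xy)^{j},\emptyset)\in\CC\}=d\Z$. Your version is cleaner in that the minimality of $n$ does its work in a single place and the order statement is actually written out, while the paper's version is more self-contained combinatorially and never needs closure under subtraction. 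That closure is the one load-bearing step in your argument that deserves a written line: from $\beta((xy)^{k},(xy)^{k})\in\CC$ and $\beta((xy)^{k'},(xy)^{k'})\in\CC$ one gets $\beta((xy)^{k-k'},(xy)^{k-k'})\in\CC$ by conjugating the second to $\beta((yx)^{k'},(yx)^{k'})$, concatenating, and sandwiching with the partition $\pi((xy)^{k}(yx)^{k'},(xy)^{k-k'})\in\CC_{\Gamma_{d}}$ that identifies the two words representing the same group element; this is indeed contained in the discussion of Subsection \ref{subsec:wreathpairs}, so the citation is apt, but it should be spelled out.
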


\begin{proof}
The fact that $\beta((xy)^{n}, (xy)^{n})\in \CC$ translates into the equation
\begin{equation*}
\sum_{k=1}^{N} (u_{ik}^{x}u_{ik}^{y})^{n} = \sum_{m=1}^{N} (u_{mj}^{x}u_{mj}^{y})^{n}
\end{equation*}
for any $1\leqslant i, j\leqslant N$ and the fact that $s$ is a group-like element are proved as in Lemma \ref{lem:cyclic1d}. Let $p=q^{*}q\in \CC$ be a projective partition with $t(p)=0$. Up to equivalence, we can assume (using $D_{xx}$ and $D_{yy}$) that its upper colouring is alternating. Let $2m$ be the number of points of $q$ and assume without loss of generality that the colouring is $(xy)^{m}$. If $m<n$, let $b$ be an interval of $q$. Then, $b^{*}b\in \CC$ by Proposition \ref{prop:intervalblockstable} and has size smaller than $n$, contradicting its minimality. Thus, $m \geqslant n$. Let $m = n\times \ell + r$ be the euclidean division of $m$ by $n$ and notice that (up to relabelling the colours)
\begin{equation*}
\left[(\beta((xy)^{n}, (xy)^{n})^{\otimes \ell}\otimes (\pi(x, x)\otimes \pi(y, y))^{\otimes r}\right]p = \beta((xy)^{n}, (xy)^{n})^{\otimes \ell}\otimes q
\end{equation*}
for some projective partition $q$ of size $2r$. If $q$ is not empty, then any of its interval again contradicts the minimality of $n$. Hence, $q = \emptyset$ and the left-hand side is an equivalence between $p$ and $\beta((xy)^{n}, (xy)^{n})^{\otimes \ell}$. This implies that the one-dimensional representation associated to $p$ is equivalent to $s^{\otimes \ell}$. By definition, $n\leqslant d$ so let $d = n\times \ell + r$ be the euclidean division of $d$ by $n$. The same reasoning as before shows that either $r = 0$ or $\beta((xy)^{r}, (xy)^{r})\in \CC$, from which the last assertion follows.

\end{proof}

For $n$ dividing $d$, let us denote by $\Lambda_{n}$ the subgroup of $\Gamma_{d}$ generated by $(xy)^{n}$. A consequence of the previous statement is that the only subgroups of $\Gamma_{d}$ which can appear in the even case are the $\Lambda_{n}$'s. We can now complete the classification of the even case.

\begin{thm}
Let $N\geqslant 4$ be an integer and let $\CC$ be a category of noncrossing partitions which is not a free product, such that all blocks have even size and with a block of size at least four. Then, there exist two integers $d$ and $n$, with $n$ dividing $d$, such that $\G = H_{N}^{+}(\Gamma_{d}, \Lambda_{n})$.
\end{thm}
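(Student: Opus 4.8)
The plan is to identify $\CC$ with the category $\CC_{\Gamma_{d}, \Lambda_{n}}$ of a free wreath product of a pair and then read off the quantum group from Proposition \ref{prop:classificationwreath}. Concretely, let $d = \min\{k : \beta((xy)^{k}, \emptyset)\in\CC\}$ be the integer produced in Proposition \ref{prop:firstquotienthyperoctahedral}, so that $\CC_{\Gamma_{d}}\subset\CC$, and let $n = \min\{k\geqslant 1 : \beta((xy)^{k}, (xy)^{k})\in\CC\}$ (with $n = 0$ if this set is empty) as before Lemma \ref{lem:cyclic1dhyperoctahedral}. I would first record the easy inclusion $\CC_{\Gamma_{d}, \Lambda_{n}}\subset\CC$: we already have $\CC_{\Gamma_{d}}\subset\CC$, and when $n\geqslant 1$ the generator $\beta_{(xy)^{n}} = \beta((xy)^{n}, (xy)^{n})$ of $\CC_{\Gamma_{d}, \Lambda_{n}}$ over $\CC_{\Gamma_{d}}$ lies in $\CC$ by the very definition of $n$ (when $n = 0$ one has $\Lambda_{0} = \{e\}$ and $\CC_{\Gamma_{d}, \Lambda_{0}} = \CC_{\Gamma_{d}}$, so the inclusion is trivial).

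Next I would apply Proposition \ref{prop:classificationwreath} with $\Gamma = \Gamma_{d}$, $\Lambda = \Lambda_{n}$ and $S = \{x, y\}$ (which is symmetric since $x^{-1}=x$ and $y^{-1}=y$). This gives $\CC = \CC_{\widetilde{\Gamma}, \widetilde{\Lambda}}$ with $\widetilde{\Gamma} = \Gamma_{d}/\Theta$, where $\Theta$ is the normal subgroup of all $\gamma\in\Gamma_{d}$ admitting a word $w$ with class $[w]=\gamma$ and $\beta(w, \emptyset)\in\CC$, and with $\widetilde{\Lambda} = \{\gamma\in\widetilde{\Gamma} : \beta_{\gamma}\in\CC\}$. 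The heart of the argument, and the main obstacle, is to verify that this construction returns the pair $(\Gamma_{d}, \Lambda_{n})$ on the nose rather than a proper quotient, i.e. that $\Theta = \{e\}$ and $\widetilde{\Lambda} = \Lambda_{n}$.

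To see $\Theta = \{e\}$, take $\gamma\in\Theta$, realised by $\beta(w, \emptyset)\in\CC$. Since every block of $\CC$ has even size and $\beta(w, \emptyset)$ is a single block, $|w|$ is even, so $\gamma$ lies in the rotation subgroup of $\Gamma_{d}$, say $\gamma = (xy)^{j}$ with $0\leqslant j < d$. Composing $\beta(w, \emptyset)$ with the one-block partition $\pi((xy)^{j}, w)\in\CC_{\Gamma_{d}}$ (available because $[w]=(xy)^{j}$) yields $\beta((xy)^{j}, \emptyset)\in\CC$, and minimality of $d$ forces $j = 0$, hence $\gamma = e$ and $\widetilde{\Gamma} = \Gamma_{d}$. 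For $\widetilde{\Lambda}$, the inclusion $\Lambda_{n}\subset\widetilde{\Lambda}$ is immediate from $\beta_{(xy)^{n}}\in\CC$. Conversely, if $\gamma\in\widetilde{\Lambda}$ then $\beta_{\gamma}\in\CC$; the same parity constraint gives $\gamma = (xy)^{j}$, and after passing to the representative $(xy)^{j}$ through $\CC_{\Gamma_{d}}$ we obtain the non-through-block projective partition $\beta((xy)^{j}, (xy)^{j}) = \pi((xy)^{j},\emptyset)^{*}\pi((xy)^{j},\emptyset)\in\CC$ with alternating colouring. The argument inside Lemma \ref{lem:cyclic1dhyperoctahedral} — euclidean division of $j$ by $n$, with the remainder ruled out by minimality of $n$ via Proposition \ref{prop:intervalblockstable} — then shows $n\mid j$, so $\gamma\in\Lambda_{n}$ and $\widetilde{\Lambda} = \Lambda_{n}$.

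Putting these together, $\CC = \CC_{\Gamma_{d}, \Lambda_{n}}$ and therefore $\G = \G_{N}(\CC) = H_{N}^{+}(\Gamma_{d}, \Lambda_{n})$, while the divisibility $n\mid d$ is exactly the last assertion of Lemma \ref{lem:cyclic1dhyperoctahedral}. I expect essentially all the difficulty to sit in the two minimality computations of the previous paragraph: it is the evenness of all blocks that confines both $\Theta$ and $\widetilde{\Lambda}$ to the rotation subgroup $\langle xy\rangle$ and makes the comparison with $d$ and $n$ possible. The only remaining work is routine bookkeeping for the degenerate cases $d = 0$ and $n = 0$.
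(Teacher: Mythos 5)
Your proposal is correct and takes essentially the same route as the paper, whose proof is just a one-line citation of Proposition \ref{prop:firstquotienthyperoctahedral}, Lemma \ref{lem:cyclic1dhyperoctahedral}, Lemma \ref{lem:amalgamatedhyperoctahedral} and Proposition \ref{prop:classificationwreath}. Your explicit check that the pair $(\widetilde{\Gamma}, \widetilde{\Lambda})$ produced by Proposition \ref{prop:classificationwreath} is exactly $(\Gamma_{d}, \Lambda_{n})$ (via the evenness of blocks, the minimality of $d$ and the euclidean division argument of Lemma \ref{lem:cyclic1dhyperoctahedral}) merely fills in details the paper leaves implicit.
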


\begin{proof}
This is a direct consequence of Proposition \ref{prop:firstquotienthyperoctahedral}, Lemma \ref{lem:cyclic1dhyperoctahedral}, Lemma \ref{lem:amalgamatedhyperoctahedral} and Proposition \ref{prop:classificationwreath}.
\end{proof}

One may be surprised that there is no group-like relations in the statement. This is because in the case of free wreath products of pairs, adding group-like relations is the same as replacing $\Gamma$ and $\Lambda$ by some suitable quotient, as explained in \textbf{Step 3.} the proof of Proposition \ref{prop:classificationwreath}.

\section{Classification IV : other cases}\label{sec:symmetric}

There is one case left : when some partitions contain odd blocks of size at least three. In the orthogonal case, there are two possibilites :
\begin{itemize}
\item the quantum permutation group $S_{N}^{+}$ whose category of partitions is $NC$,
\item its symmetrized version $S_{N}^{+\prime} = S_{N}^{+}\times \Z_{2}$ whose category of partitions $NC'$ consists in all noncrossing partitions with an even number of odd blocks.
\end{itemize}
As for the bistochastic case, the main distinction will be the presence or absence of two-coloured partitions in $\CC$. If there are some, we will get free wreath products of pairs. If there is none, we will be adding commutation relations between one-dimensional representations and higher-dimensional ones, as well as group-like relations.

\subsection{The non-amalgamated case}\label{subsec:nonamalgamatedsymetric}

We start with the non-amalgamated case. We will obtain quotients of a free product by some commutation relations. As one may expect, any pair of orthogonal easy quantum groups which did not appear in Subsection \ref{sec:bistochastic} may appear here. This can be seen from the following simple lemma :

\begin{lem}\label{lem:symetrichasfourblock}
Let $\CC$ be a category of noncrossing partitions such that all blocks have only one colour, with an odd block of size at least three. Then, up to exchanging the colours $P_{xx}\in \CC$ and $\CC$ contains a four-block.
\end{lem}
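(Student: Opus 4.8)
The plan is to distill from the hypotheses a single clean monochromatic block, and then to read off both conclusions from it. Before anything else I would record that, since both colours are their own inverses, rotating the $x$-identity partition $\idpart\in\CC(x,x)$ on one line produces the pairing $\pi(xx,\emptyset)=D_{xx}\in\CC$, and likewise $D_{yy}\in\CC$. As in the proof of Lemma \ref{lem:cyclic1d}, this lets me cap any two neighbouring points of the same colour by concatenation with $D_{xx}$ or $D_{yy}$; concretely, a monochromatic \emph{interval} block of even size can be deleted, and one of odd size reduced to a singleton. Up to exchanging the two colours, I assume the odd block of size at least three is coloured by $x$.

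The heart of the argument is to produce a clean one-block partition $\pi(x^{2m+1},\emptyset)\in\CC$ with $m\geqslant 1$. Starting from a partition carrying such an odd $x$-block, I would isolate the block by passing to a minimal full subpartition and applying Proposition \ref{prop:intervalblockstable}, and then strip away everything nested inside it, working from the innermost blocks outwards (these are always intervals). The novelty compared with the even case treated in Lemma \ref{lem:hyperoctahedralinterval} is that a nested block may itself be odd: such a block is first reduced to a singleton, and a stray singleton of colour $z$ is then removed by capping it against a spare copy of $\singleton$ of colour $z$ --- which is available because any singleton occurring can be extracted on its own by Proposition \ref{prop:intervalblockstable} --- the two singletons closing up into a loop that is deleted. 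Since every manipulation removes points in pairs, the surviving block keeps odd size, so after capping I may take it to be $\pi(xxx,\emptyset)$.

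From $\pi(xxx,\emptyset)$ both assertions are immediate. Capping one more pair gives $\pi(x,\emptyset)=\singleton$, whence $P_{xx}=\singleton\otimes\singleton\in\CC$. For the four-block I rotate $\pi(xxx,\emptyset)$ into $\pi(xx,x)$ and $\pi(x,xx)$ and take a single vertical concatenation $\pi(x,xx)\,\pi(xx,x)$: the lone middle point fuses the two blocks into one without creating a loop, so the result is $\pi(xx,xx)$, which rotates to the four-block $\pi(xxxx,\emptyset)\in\CC$.

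I expect the isolation step to be the only delicate point, precisely because nested blocks of odd size leave behind singletons that cannot be capped away in isolation; the resolution is the loop-removal trick above. Once the nesting has been cleared, the points of the remaining clean block are cyclically adjacent and the final reductions are entirely routine.
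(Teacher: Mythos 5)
Your overall skeleton (isolate the odd block via a minimal full subpartition, clear out the nested blocks, read off both conclusions from a clean block) is the right one, but the intermediate target you aim for is too strong, and the step meant to reach it fails. You claim to produce $\pi(x^{2m+1},\emptyset)\in\CC$; capping this down gives $P_{x}=\pi(x,\emptyset)\in\CC$, i.e.\ the triviality of the one-dimensional representation $s_{x}$, which is strictly stronger than the conclusion $P_{xx}\in\CC$ (the latter only forces $s_{x}^{2}=1$) and is false in general. Concretely, take for $\CC$ the free product of the category $NC'$ of $S_{N}^{+\prime}$ on the colour $x$ with $\CC_{O^{+}}$ on the colour $y$: all blocks are monochromatic and $\CC$ contains $\pi(xxx,\emptyset)\otimes P_{x}$, hence an odd $x$-block of size three, so the hypotheses hold. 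Yet every generator, hence every partition, of this category has an even total number of points, equivalently an even number of odd blocks, so neither $P_{x}$ nor any $\pi(x^{2m+1},\emptyset)$ can belong to $\CC$. Your plan therefore cannot be carried out from the stated hypotheses.

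The step that actually breaks is the removal of the stray singletons. Proposition \ref{prop:intervalblockstable} only ever yields $q^{*}q$, never $q$ itself: a singleton occurring as a full subpartition can be extracted only as $P_{z}^{*}P_{z}$ (equivalently, after rotation, as $P_{zz}$), not as the one-point partition $P_{z}$ that you would need to close a loop against the stray singleton. There is also a parity obstruction internal to your own argument: after applying Proposition \ref{prop:intervalblockstable} you are manipulating partitions with an even number of points, and all your moves remove points in pairs, so you can never arrive at $\pi(xxx,\emptyset)$. The repair is to aim only at doubled objects, which is what the paper does: if the odd block can be reduced to an interval, cap it down to a singleton and extract $P_{z}^{*}P_{z}$, whence $P_{zz}\in\CC$; if not, some nested singleton survives and yields $P_{zz}\in\CC$ in the same way. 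For the four-block, your computation $\pi(x,xx)\pi(xx,x)=\pi(xx,xx)$ is correct but starts from the unavailable $\pi(xxx,\emptyset)$; instead one reduces $q^{*}q$ to a partition of the form $r=b\otimes\pi(a_{1}a_{1},a_{2}a_{2})\otimes\overline{b}^{*}$ and uses $rr^{*}$ as in Lemma \ref{lem:hyperoctahedralinterval} (in the clean case this amounts to rotating $\beta(xxx,xxx)=\pi(xxx,\emptyset)^{*}\pi(xxx,\emptyset)$ onto one line and capping the two middle points, which merges the two three-blocks into a four-block).
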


\begin{proof}
Recall that we are assuming that all blocks have only one colour. Let $p\in \CC$, let $b$ be an odd block of $p$ of size at least three and assume without loss of generality that it is coloured by $x$. If it is an interval, then $b^{*}b\in \CC$ and concatenating with $D_{xx}$ yields $P_{xx}\in \CC$. Assume that no odd block of size at least three is an interval. Since any even block can be removed by concatenating with $D_{xx}$ or $D_{yy}$, this means that there are two points of $b$ between which there is a singleton. Thus, we may assume again that $P_{xx}\in \CC$.

Let now $p$ be a partition containing a block of size at least three and let $q$ be a minimal the full subpartition containing this block. By Proposition \ref{prop:intervalblockstable}, $q^{*}q\in \CC$ and the same argument as in Lemma \ref{lem:hyperoctahedralinterval} yields a four-block in $\CC$.
\end{proof}

We will now split the classification into several cases, depending on the category $\CC_{y}$ of all partitions in $\CC$ coloured only with $y$. 

\subsubsection{First case}

We start with the case where blocks in $\CC_{y}$ have size less than two. We will show that the only possible commutation relations are those involving $u^{xx}$.

\begin{prop}\label{prop:symmetricbybistochastic}
Let $N\geqslant 4$ be an integer and let $\CC$ be a category of noncrossing partitions such that $P_{xx}, \pi(xx, xx)\in \CC$ and all blocks coloured only with $y$ have size at most two. If $\CC$ is not a free product, then there exist two orthogonal easy quantum groups $\G_{1}\in \{S_{N}^{+}, S_{N}^{+\prime}\}$ and $\G_{2}\in \{O_{N}^{+}, B_{N}^{+\sharp}, B_{N}^{+\prime}, B_{N}^{+}\}$ such that $C(\G_{N}(\CC))$ is a quotient of $C(\G_{1})\ast C(\G_{2})$ by one or several of the following relations
\begin{itemize}
\item $t u^{xx}_{ij} = u^{xx}_{ij}t$ for all $1\leqslant i, j \leqslant \dim(u^{xx})$, where $t$ is some group-like element,
\item $t u^{yy}_{ij} = u^{yy}_{ij}t$ for all $1\leqslant i, j \leqslant \dim(u^{yy})$, where $t$ is some group-like element,
\item group-like relations.
\end{itemize}
\end{prop}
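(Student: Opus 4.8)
The plan is to follow the template of Proposition \ref{prop:nonamalgamatednoy} and the proposition following it, realizing the target quantum group as a free product to which one first adds commutation relations and then group-like relations. Since we are in the non-amalgamated case, every block of $\CC$ is monochromatic, so it makes sense to set $\CC_{x}$ (resp. $\CC_{y}$) to be the category of all partitions of $\CC$ coloured only by $x$ (resp. $y$) and to put $\DD = \CC_{x}\ast\CC_{y}$, so that $\G_{N}(\DD) = \G_{N}(\CC_{x})\ast\G_{N}(\CC_{y})$ by \cite[Prop 4.12]{freslon2014partition}. First I would identify the two factors: because $P_{xx}, \pi(xx,xx)\in\CC$ and, being in the symmetric case, $\CC$ contains an odd block of size at least three which (as $y$-blocks have size at most two) must be coloured by $x$, the classification of orthogonal easy quantum groups forces $\CC_{x}$ to be the category of some $\G_{1}\in\{S_{N}^{+}, S_{N}^{+\prime}\}$. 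Likewise $\CC_{y}$ has blocks of size at most two and is therefore the category of some $\G_{2}\in\{O_{N}^{+}, B_{N}^{+}, B_{N}^{+\sharp}, B_{N}^{+\prime}\}$.

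The core of the argument is then to show that every projective partition of $\CC$ lies in a category $\CC'$ with $\DD\subseteq\CC'\subseteq\CC$ obtained from $\DD$ by adding relations of the two commutation types. As in the earlier proofs I would argue by induction on the number of blocks, and using Remark \ref{rem:noncrossingdecomposition} reduce to two kinds of indecomposable projective partitions. For a through-block projective partition with a single through-block: if the through-block is coloured by $y$ it has size two and is just an identity line, while if it is coloured by $x$ one reconstructs it, exactly as in Step 1 of Proposition \ref{prop:classificationwreath}, from a monochromatic $x$-skeleton (which lies in $\CC_{x}\subseteq\DD$) and the nested non-through-block partitions $b_{i}^{*}b_{i}$, which have fewer blocks and hence lie in $\CC'$ by induction and Proposition \ref{prop:intervalblockstable}; so no new relation is needed. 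For a non-through-block projective partition $p = q^{*}q$ with the endpoints of $q$ connected, I would rotate it into the form $b\otimes\pi(a,a)\otimes\pi(a,a)\otimes b^{*}$, where $a$ is the colour of that outer block; by the mechanism of Proposition \ref{prop:commutationrelations}, adding this partition to $\DD$ amounts precisely to the relation that the group-like element $u_{b^{*}b}$ commutes with $u^{aa}$, which is one of the two relations in the statement.

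The main obstacle is the case $a = x$ with an outer block of size larger than two, since then the naive rotation produces a commutation relation with a higher-dimensional representation $u^{x^{2m}}$ rather than with $u^{xx}$. This is where the richness of $\CC_{x}$ is used: because $\CC_{x}$ is $NC$ or $NC'$, the representation $u^{xx}$ already contains the fundamental irreducible component of $u^{x}$, whose tensor powers exhaust all irreducible components of the $u^{x^{k}}$; consequently commutation with $u^{xx}$ is equivalent to commutation with every $u^{x^{k}}$, and the relation obtained is again of the stated type. Equivalently, one can use the fullness of $\CC_{x}$ to reduce the outer $x$-block to size two before rotating. The case $a = y$ poses no such difficulty, since $y$-blocks have size at most two and the through part is automatically $\pi(y,y)\otimes\pi(y,y)$.

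Finally, once $\CC'$ has been constructed so that it contains every projective partition of $\CC$ and $\G_{N}(\CC')$ is a quotient of $\G_{1}\ast\G_{2}$ by commutation relations, any remaining partition of $\CC$, rotated onto one line, makes a one-dimensional representation trivial and hence yields a group-like relation by Lemma \ref{lem:nonthroughblockstructure}; so $C(\G_{N}(\CC))$ is the quotient of $C(\G_{N}(\CC'))$ by group-like relations, which completes the proof. As a preliminary base case one notes that if all projective partitions already lie in $\DD$, then $\G_{N}(\CC)$ is directly a quotient of $\G_{1}\ast\G_{2}$ by group-like relations, and the non-triviality hypothesis that $\CC$ is not a free product guarantees only that we are not in the degenerate situation $\CC=\DD$.
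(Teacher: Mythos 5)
Your global strategy (set $\DD=\CC_{x}\ast\CC_{y}$, identify the two factors, pass to an intermediate category $\DD\subset\CC'\subset\CC$ by adding commutation relations until it contains all projective partitions of $\CC$, then finish with group-like relations) is the paper's, and your handling of the non-through-block partitions -- including the reduction of a large outer $x$-block to size two via $\pi(xx,x)\otimes P_{x}\in\CC_{x}$ -- matches its \textbf{Step 1}. The gap is in the through-block case. You claim that a projective partition $p$ with through-block $\pi(x^{n},x^{n})$ and nested partitions $b_{i}$ is reconstructed ``exactly as in Step 1 of Proposition \ref{prop:classificationwreath}'' from the monochromatic skeleton and the $b_{i}^{*}b_{i}$, so that ``no new relation is needed''. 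That reconstruction composes with the one-block partition $\pi(w,w)$ on the \emph{full interleaved word} $w$, which has a two-coloured block and is precisely what is unavailable in the non-amalgamated case; with only $\pi(x^{n},x^{n})$ the composition does not even make sense. Worse, the conclusion is false. Take $\CC_{y}=\CC_{B^{+\sharp}}$ and let $p\in NC(xyx,xyx)$ be $\pi(xx,xx)$ with a $y$-singleton nested in each row: here $\pi(xx,xx)\in\CC_{x}$ and $P_{y}^{*}P_{y}\in\CC_{y}$, yet $T_{p}$ is not an intertwiner for the free product, since the intertwining identity forces $u^{x}_{ji}\,s_{y}\,u^{x}_{j'i}=0$ for $j\neq j'$ (using that row and column sums of $u^{y}$ equal $s_{y}$), i.e.\ exactly the commutation of $s_{y}$ with the coefficients of $u^{xx}$, which fails in $C(S_{N}^{+})\ast C(B_{N}^{+\sharp})$. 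Note also that your induction is on the number of blocks, and this $p$ (three blocks) is reached \emph{before} its non-through-block companion $q^{*}q$ (four blocks), so you cannot argue that the relevant commutation relation was already added at an earlier stage.

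This is exactly why the paper's \textbf{Step 2} adds the partitions $r_{i}=b_{i}\otimes\pi(xx,xx)\otimes\overline{b}_{i}^{*}$ -- which encode commutation of the group-like element $u_{b_{i}^{*}b_{i}}$ with $u^{xx}$, hence are of the allowed type -- and only then recovers $p$, by first building the one-line partition $q$ (an $x$-block with the $b_{i}$ inserted), proving $q^{*}q\in\CC'$ by the Step 1 induction, and concatenating $q\otimes\overline{q}^{*}$ with $\pi(xx,xx)$ in the middle. Your argument can be repaired along these lines, and in fact the $r_{i}$ are generated by the partitions $t_{i}=b_{i}\otimes\pi(x,x)\otimes\pi(x,x)\otimes\overline{b}_{i}^{*}$ that you do add in the non-through-block case, but as written the through-block step is unjustified and its stated justification is invalid.
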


\begin{proof}
Set $\DD = \CC_{x}\ast\CC_{y}$. We will first prove that there is a category of partitions $\CC'\subset\CC$ such that $C(\G_{N}(\CC'))$ is a quotient of $C(\G_{N}(\DD))$ by commutation relations as in the statement and any projective partition in $\CC$ lies in $\CC'$.\\

\noindent\textbf{Step 1.} We start with a non-through-block projective partition $p = q^{*}q$ where the endpoints of $q$ are connected. If the endpoints of $q$ form a two-block, then we conclude as in Proposition \ref{prop:nonamalgamatednoy} for the bistochastic case. Otherwise, the endpoints must be coloured by $x$ so let us assume that we have an $x$-block of size $n$ with other partitions $b_{1}, \cdots, b_{n-1}$ between its points. As in \cite[Lem 4.2]{freslon2013fusion}, after rotating and capping we get $r_{i} = b_{i}\otimes \pi(xx, xx)\otimes \overline{b}_{i}^{*}\in \CC$ for any $1\leqslant i\leqslant n-1$. Moreover, by the assumptions and the classification of orthogonal easy quantum groups, $\pi(xx,x)\otimes P_{x}\in \CC_{x}\subset\CC$ so that
\begin{equation*}
s_{i} = (\pi(xx, x)\otimes P_{x}^{*})r_{i}(b_{i}^{*}b_{i}\otimes\pi(x, xx)\otimes P_{x}\otimes \overline{b}_{i}\overline{b}_{i}^{*}) = b_{i}\otimes \pi(x, x)\otimes P_{x}\otimes \overline{b}_{i}^{*}\otimes P_{x}^{*}\in \CC
\end{equation*}
Now, we can simplify $s_{i}\otimes \overline{s}_{i}^{*}$ by concatenating neighbouring points with the same colour until we get the partition $t_{i} = b_{i}\otimes \pi(x, x)\otimes \pi(x, x)\otimes \overline{b}_{i}^{*}$. Thus adding $p$ implies adding $t_{i}$ for all $1\leqslant i\leqslant n$, and each $t_{i}$ yields a commutation relation as in the statement.

We now claim that the category of partitions $\CC'$ generated by $\DD$ and all the $t_{i}$'s also contains $p$. This can be proved by induction on the number $n$ of points in the large $x$-block of $q$. For $n=2$, $p$ is a rotated version of $t_{1}$. For $n\geqslant 3$, let $q'$ be the partition with an $x$-block of $n-1$ points and the partitions $b_{1}, \cdots, b_{n-2}$ in between, i.e.~$q$ with the $b_{n-1}$-part removed. By the induction hypothesis, $q^{\prime\ast}q'\in \CC'$. Recalling that $\pi(xx, x)\otimes\pi(xx, x)\in NC'$, let us set
\begin{equation*}
z = \left[\pi(xx, x)\otimes \pi(xx, x)\right]\left[\pi(x, x)\otimes t_{n-1}\otimes \pi(x, x)\right] \in \CC'.
\end{equation*}
Rotating $q^{\prime\ast}q'$ on one line and concatenating in the middle by $z$ then produces a rotated version of $p$.\\

\noindent\textbf{Step 2.} We now consider a general projective partition and write it as an horizontal concatenation of non-through-block partitions with endpoints connected and through-block partitions with endpoints connected as in Remark \ref{rem:noncrossingdecomposition}. We have to focus on the second type. If the endpoints are coloured by $y$, this is just $\pi(y, y)$ so that we assume that the endpoints are coloured by $x$. We therefore have $\pi(x^{n}, x^{n})$ with partitions $b_{1}, \cdots, b_{n-1}$ between the points. Using again \cite[Lem 4.2]{freslon2013fusion}, we see that $r_{i} = b_{i}\otimes\pi(xx, xx)\otimes\overline{b}_{i}^{*}\in \CC$ for all $1\leqslant i\leqslant n$. Let $\CC'$ be the category of partitions generated by $\DD$ and all the $r_{i}$'s. By the same proof as in \textbf{Step 1.} above, $q^{*}q\in \CC$ where $q$ lie on one line and consists in an $x$-block of size $n$ with the partitions $b_{1}, \cdots, b_{n}$ in between the points. Now, concatenating $q\otimes\overline{q}^{*}$ with $\pi(xx, xx)$ in the middle yields a rotated version of our initial projective partition, hence the result.\\

\noindent\textbf{Step 3.} The proof concludes as usual : all we can do now is add group-like relations.
\end{proof}

\subsubsection{Second case}

In this second case, we will assume that $P_{xx}\in \CC$ and that there is a partition with a $y$-block of even size at least four. The same argument as in Lemma \ref{lem:symetrichasfourblock} then shows that $\pi(yy, yy)\in\CC$ so that any $y$-coloured partition with all blocks of even size is in $\CC$. It turns out that this is the most complicated case of the classification. To explain this, we need some notations. Let $p_{k}$ be the partition $\pi(y^{k+1}, y^{k+1})$ together with an $x$-singleton between each point of the upper and lower row (but not between the endpoints of each row) and let $q_{k}$ be the non-through-block version of $p_{k}$. Here is a pictorial description :

\begin{center}
\begin{tikzpicture}[scale=0.5]
\draw (-2.5,0.5) -- (2.5,0.5);
\draw (-2.5,-0.5) -- (2.5,-0.5);

\draw (-2.5,0.5) -- (-2.5,1.5);
\draw (2.5,0.5) -- (2.5,1.5);
\draw (-2.5,-0.5) -- (-2.5,-1.5);
\draw (2.5,-0.5) -- (2.5,-1.5);

\draw (-1,0.5) -- (-1,1.5);
\draw (1,0.5) -- (1,1.5);
\draw (-1,-0.5) -- (-1,-1.5);
\draw (1,-0.5) -- (1,-1.5);

\draw (0,0.5) -- (0,-0.5);

\draw (0,1.5) node[below]{$\dots$};
\draw (0,-1.5) node[above]{$\dots$};

\draw (-2.5,1.5) node[above]{$y$};
\draw (-1.75,1) node[above]{$\overset{x}{\circ}$};
\draw (-1,1.5) node[above]{$y$};
\draw (1,1.5) node[above]{$y$};
\draw (1.75,1) node[above]{$\overset{x}{\circ}$};
\draw (2.5,1.5) node[above]{$y$};

\draw (-2.5,-1.5) node[below]{$y$};
\draw (-1.75,-2.5) node[above]{$\underset{x}{\circ}$};
\draw (-1,-1.5) node[below]{$y$};
\draw (1,-1.5) node[below]{$y$};
\draw (1.75,-2.5) node[above]{$\underset{x}{\circ}$};
\draw (2.5,-1.5) node[below]{$y$};

\draw (-3,0) node[left]{$p_{k} = $};
\end{tikzpicture}
\begin{tikzpicture}[scale=0.5]
\draw (-2.5,0.5) -- (2.5,0.5);
\draw (-2.5,-0.5) -- (2.5,-0.5);

\draw (-2.5,0.5) -- (-2.5,1.5);
\draw (2.5,0.5) -- (2.5,1.5);
\draw (-2.5,-0.5) -- (-2.5,-1.5);
\draw (2.5,-0.5) -- (2.5,-1.5);

\draw (-1,0.5) -- (-1,1.5);
\draw (1,0.5) -- (1,1.5);
\draw (-1,-0.5) -- (-1,-1.5);
\draw (1,-0.5) -- (1,-1.5);


\draw (0,1.5) node[below]{$\dots$};
\draw (0,-1.5) node[above]{$\dots$};

\draw (-2.5,1.5) node[above]{$y$};
\draw (-1.75,1) node[above]{$\overset{x}{\circ}$};
\draw (-1,1.5) node[above]{$y$};
\draw (1,1.5) node[above]{$y$};
\draw (1.75,1) node[above]{$\overset{x}{\circ}$};
\draw (2.5,1.5) node[above]{$y$};

\draw (-2.5,-1.5) node[below]{$y$};
\draw (-1.75,-2.5) node[above]{$\underset{x}{\circ}$};
\draw (-1,-1.5) node[below]{$y$};
\draw (1,-1.5) node[below]{$y$};
\draw (1.75,-2.5) node[above]{$\underset{x}{\circ}$};
\draw (2.5,-1.5) node[below]{$y$};

\draw (-3,0) node[left]{$q_{k} = $};
\end{tikzpicture}
\end{center}

These partitions will be our key tool for the classification. Here are some elementary properties :

\begin{lem}\label{lem:pk}
Let $\CC$ be a category of noncrossing partitions such that $P_{xx}\in \CC$ and all partitions coloured by $y$ have even size. The following are equivalent :
\begin{enumerate}
\item $p_{k}\in \CC$ for all $k\in \N$,
\item $p_{k}\in \CC$ for some $k\in \N$,
\item $P_{x}\otimes \pi(yy, yy)\otimes P_{x}^{*}\in \CC$.
\end{enumerate}
\end{lem}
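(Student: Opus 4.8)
The plan is to prove the cycle of implications $(1)\Rightarrow(2)\Rightarrow(3)\Rightarrow(1)$. Here $(1)\Rightarrow(2)$ is immediate (take $k=1$; note that $p_{0}=\pi(y,y)$ is the $y$-identity, which lies in every category, so the substance of $(2)$ is that $p_{k}\in\CC$ for some $k\geq 1$). Everything else rests on two facts: that $p_{1}$ and $r:=P_{x}\otimes\pi(yy,yy)\otimes P_{x}^{*}$ are rotations of one another, and that $p_{1}\in\CC$ is equivalent to $p_{k}\in\CC$ for all $k$. I would first dispose of the rotation. Rotating the lower row of $r$ onto one line yields the one-line partition on the word $xyyxyy$ with blocks $\{1\},\{2,3,5,6\},\{4\}$, while doing the same to $p_{1}$ yields the word $yxyyxy$ with blocks $\{1,3,4,6\},\{2\},\{5\}$; a cyclic shift by one position carries the second diagram exactly onto the first. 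Since cyclic shifts of one-line diagrams are category rotations and both colours are self-inverse, this gives $p_{1}\in\CC\iff r\in\CC$. In particular $(2)$ with $k=1$ coincides with $(3)$, so it remains to interpolate between $p_{1}$ and general $p_{k}$.

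For $(3)\Rightarrow(1)$ I would argue by induction, constructing $p_{k+1}$ from $p_{k}$. Writing $d_{x}:=P_{x}\otimes P_{x}^{*}$ for the double $x$-singleton (a rotation of $P_{xx}$, hence in $\CC$), the partition $p_{k}\otimes d_{x}\otimes\pi(y,y)$ lies in $\CC$ and already carries the correct word $y(xy)^{k+1}$; however its $y$-points form \emph{two} through-blocks, the old one of size $2(k+1)$ together with the new $y$-pair. Composing it (on the bottom) with $[\,\mathrm{id}\otimes p_{1}\,]$, i.e.\ the identity partition on the first $2k$ points tensored with a copy of $p_{1}$ on the last three points, fuses the two $y$-through-blocks into a single one through the middle row, while the interposed $x$-strings collapse to the required singleton pair. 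The result is exactly $p_{k+1}$. The point of this step, and the reason $p_{1}$ is genuinely needed, is that $p_{1}$ is precisely the move that connects two $y$-points \emph{across} an intervening $x$-singleton; this is the one non-free operation that is unavailable from $\pi(yy,yy)$ and $P_{xx}$ alone.

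For $(2)\Rightarrow(3)$ I would run this construction in reverse as a reduction. Given $p_{k}\in\CC$ with $k\geq 1$, the $y$-through-block has size $\geq 4$, so by the capping technique of Lemma \ref{lem:hyperoctahedralinterval} (based on \cite[Lem 4.2]{freslon2013fusion}), using the even $y$-cappings $D_{yy}$ and $\pi(yy,yy)\in\CC$, I would shrink the through-block two points at a time down to size four. The surviving $x$-singletons can then be normalised to the single top/bottom pair of $p_{1}$, since $P_{xx}\in\CC$ lets one create or delete $x$-singletons in pairs and rotations reposition them; this lands on $p_{1}$, whence $r\in\CC$ by the rotation above. (An alternative is to cap directly: rotate the rightmost lower $y$ of $p_{k}$ to the top so that it becomes adjacent to the rightmost upper $y$, cap the two adjacent $y$'s with $D_{yy}$, and discard the freed $x$-singletons by $P_{xx}$, obtaining $p_{k-1}$; iterating gives $p_{1}$.)

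The main obstacle throughout is the parity bookkeeping forced by the hypotheses. Because $P_{x}\notin\CC$ in general, every $x$-singleton can only be created, destroyed, or moved in pairs, and because all $y$-partitions are even, every modification of the $y$-block must remove or insert $y$-points two at a time. Both the inductive construction and the reduction must therefore be arranged to respect these constraints and still land on the precise alternating pattern of $p_{k}$ rather than on a cousin with two adjacent $y$'s; the delicate point is to verify that the composition $[\,\mathrm{id}\otimes p_{1}\,]\cdot(p_{k}\otimes d_{x}\otimes\pi(y,y))$ fuses exactly the right blocks and produces exactly the right number of through-strings, which is where the non-free character of $p_{1}$ enters in an essential way.
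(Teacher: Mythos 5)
Your proof is correct and follows essentially the same strategy as the paper: identify $p_{1}$ with a rotation of $P_{x}\otimes\pi(yy,yy)\otimes P_{x}^{*}$, reduce $p_{k}$ to $p_{1}$ by capping with $D_{yy}$ and $P_{xx}$, and induct upward from $p_{1}$. The only difference is in the mechanics of the inductive step: where the paper enlarges the $y$-block with $\pi(yyy,y)$ and then repositions the freed $x$-singletons via a \emph{jumping} move derived from $p_{1}$, you fuse the new $y$-pair directly by composing $p_{k}\otimes(P_{x}\otimes P_{x}^{*})\otimes\pi(y,y)$ with $\mathrm{id}^{\otimes 2k}\otimes p_{1}$, a composition which one checks does produce exactly $p_{k+1}$.
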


\begin{proof}
$(1)\Rightarrow (2)$ is clear, so let us assume that $p_{k}\in\CC$ for some $k$. Rotating it on one line and concatenating as in the proof of \cite[Lem 4.1]{freslon2013fusion}, we get $p_{1}\in \CC$, which is a rotated version of $P_{x}\otimes\pi(yy, yy)\otimes P_{x}^{*}$.

If now we assume $(3)$, we already noted that rotating we get $p_{1}\in \CC$. Moreover, using this partition we can make singletons "jump" over two $y$-points which are connected. More precisely, if a partition is in $\CC$, the same partition where an $x$-singleton has been moved over two consecutive connected $y$-points is also in $\CC$. Using this, we will prove by induction that $p_{k}\in\CC$. For $k=1$ there is nothing to prove. For $k\geqslant 1$, let us rotate $p_{k}$ on one line and denote the corresponding partition by $p'_{k}$. Then, concatenating the last point of $p'_{k}$ by $\pi(yyy, y)$ yields a partition $s$ with a $y$-block of size $2k+2$ and $x$-singletons between the first $k$ points and the $k+2$-th to $2k$-th points. This is the same as a rotation of $p_{k+1}$ except that between the last two $y$-points there is nothing. Now consider $s\otimes P_{xx}\in \CC$. Then, using the "jumping property" we obtain a rotated version of $p_{k+1}$. Here is an illustration for the process building $p_{2}$ from $p_{1}$ :
\begin{center}
\begin{tikzpicture}[scale=0.5]
\draw (-1.5,-0.5) -- (1.5,-0.5);

\draw (-1.5,-0.5) -- (-1.5,0.5);
\draw (-0.5,-0.5) -- (-0.5,0.5);
\draw (0.5,-0.5) -- (0.5,0.5);
\draw (1.5,-0.5) -- (1.5,0.5);

\draw (-1.5,0.5) node[above]{$y$};
\draw (-1,0) node[above]{$\overset{x}{\circ}$};
\draw (-0.5,0.5) node[above]{$y$};
\draw (0.5,0.5) node[above]{$y$};
\draw (1,0) node[above]{$\overset{x}{\circ}$};
\draw (1.5,0.5) node[above]{$y$};

\draw (-2,0) node[left]{$p_{1}\underset{\text{rotation}}{\leadsto}$};
\end{tikzpicture}
\begin{tikzpicture}[scale=0.5]
\draw (-1.5,-0.5) -- (3.5,-0.5);

\draw (-1.5,-0.5) -- (-1.5,0.5);
\draw (-0.5,-0.5) -- (-0.5,0.5);
\draw (0.5,-0.5) -- (0.5,0.5);
\draw (1.5,-0.5) -- (1.5,0.5);
\draw (2.5,-0.5) -- (2.5,0.5);
\draw (3.5,-0.5) -- (3.5,0.5);

\draw (-1.5,0.5) node[above]{$y$};
\draw (-1,0) node[above]{$\overset{x}{\circ}$};
\draw (-0.5,0.5) node[above]{$y$};
\draw (0.5,0.5) node[above]{$y$};
\draw (1,0) node[above]{$\overset{x}{\circ}$};
\draw (1.5,0.5) node[above]{$y$};
\draw (2.5,0.5) node[above]{$y$};
\draw (3.5,0.5) node[above]{$y$};

\draw (-2,0) node[left]{$\underset{\pi(y, yyy)}{\leadsto}$};
\end{tikzpicture}
\begin{tikzpicture}[scale=0.5]
\draw (-1.5,-0.5) -- (3.5,-0.5);

\draw (-1.5,-0.5) -- (-1.5,0.5);
\draw (-0.5,-0.5) -- (-0.5,0.5);
\draw (0.5,-0.5) -- (0.5,0.5);
\draw (1.5,-0.5) -- (1.5,0.5);
\draw (2.5,-0.5) -- (2.5,0.5);
\draw (3.5,-0.5) -- (3.5,0.5);

\draw (-1.5,0.5) node[above]{$y$};
\draw (-1,0) node[above]{$\overset{x}{\circ}$};
\draw (-0.5,0.5) node[above]{$y$};
\draw (0.5,0.5) node[above]{$y$};
\draw (1,0) node[above]{$\overset{x}{\circ}$};
\draw (1.5,0.5) node[above]{$y$};
\draw (2.5,0.5) node[above]{$y$};
\draw (3.5,0.5) node[above]{$y$};
\draw (4,0) node[above]{$\overset{x}{\circ}$};
\draw (5,0) node[above]{$\overset{x}{\circ}$};

\draw (-2,0) node[left]{$\underset{\otimes P_{xx}}{\leadsto}$};
\end{tikzpicture}
\begin{tikzpicture}[scale=0.5]
\draw (-1.5,-0.5) -- (3.5,-0.5);

\draw (-1.5,-0.5) -- (-1.5,0.5);
\draw (-0.5,-0.5) -- (-0.5,0.5);
\draw (0.5,-0.5) -- (0.5,0.5);
\draw (1.5,-0.5) -- (1.5,0.5);
\draw (2.5,-0.5) -- (2.5,0.5);
\draw (3.5,-0.5) -- (3.5,0.5);

\draw (-1.5,0.5) node[above]{$y$};
\draw (-1,0) node[above]{$\overset{x}{\circ}$};
\draw (-0.5,0.5) node[above]{$y$};
\draw (0.5,0.5) node[above]{$y$};
\draw (1,0) node[above]{$\overset{x}{\circ}$};
\draw (1.5,0.5) node[above]{$y$};
\draw (2,0) node[above]{$\overset{x}{\circ}$};
\draw (2.5,0.5) node[above]{$y$};
\draw (3.5,0.5) node[above]{$y$};
\draw (4,0) node[above]{$\overset{x}{\circ}$};

\draw (-2,0) node[left]{$\underset{\text{jumping}}{\leadsto}$};
\end{tikzpicture}
\begin{tikzpicture}[scale=0.5]
\draw (-1.5,-0.5) -- (3.5,-0.5);

\draw (-1.5,-0.5) -- (-1.5,0.5);
\draw (-0.5,-0.5) -- (-0.5,0.5);
\draw (0.5,-0.5) -- (0.5,0.5);
\draw (1.5,-0.5) -- (1.5,0.5);
\draw (2.5,-0.5) -- (2.5,0.5);
\draw (3.5,-0.5) -- (3.5,0.5);

\draw (-1.5,0.5) node[above]{$y$};
\draw (-1,0) node[above]{$\overset{x}{\circ}$};
\draw (-0.5,0.5) node[above]{$y$};
\draw (0,0) node[above]{$\overset{x}{\circ}$};
\draw (0.5,0.5) node[above]{$y$};
\draw (1.5,0.5) node[above]{$y$};
\draw (2,0) node[above]{$\overset{x}{\circ}$};
\draw (2.5,0.5) node[above]{$y$};
\draw (3,0) node[above]{$\overset{x}{\circ}$};
\draw (3.5,0.5) node[above]{$y$};

\draw (-2,0) node[left]{$\underset{\text{rotation}}{\leadsto}$};
\end{tikzpicture}
\end{center}
\end{proof}

The idea for the classification is as follows : if the partition quantum group is not a free product, then its category of partition contains $P_{x}\otimes \pi(yy, yy)\otimes P_{x}^{*}$. To get all projective partitions, one may have to add $q_{k}$ for some $k$, and then group-like relations. However, the proof of this involves the structure of $x$-coloured partitions, which is not easy to deal with. To keep things clear, we will therefore split the arguments into several statements but before we prove the claim at the beginning of this paragraph. From now on, we write $NC_{ev}$ for the category of all noncrossing partitions with blocks of even size.

\begin{lem}\label{lem:nonamalgamatedsymetricnotfree}
Let $\CC$ be a category of noncrossing partitions with $P_{x}\in \CC$ and such that $\CC_{y} = NC_{ev}$. If it is not a free product, then $P_{x}\otimes \pi(yy, yy)\otimes P_{x}\in \CC$.
\end{lem}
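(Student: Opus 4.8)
The plan is to follow the template of the proof of Lemma \ref{lem:bistochasticnotfreeproduct}, replacing the two bistochastic factors by $S_{N}^{+}$ (or $B_{N}^{+}$) on the colour $x$ and $H_{N}^{+}$ on the colour $y$. Write $\CC_{x}$ for the category of all partitions of $\CC$ coloured only by $x$ and set $\DD = \CC_{x}\ast\CC_{y} = \CC_{x}\ast NC_{ev}$, which is contained in $\CC$. First I would pin down $\DD$: since $P_{x}\in\CC$, the category $\CC_{x}$ contains the singleton, so by the classification of one-coloured noncrossing categories it is one of the two block-stable (hence free) ones, $B_{N}^{+}$ or $NC$; as $NC_{ev}$ is also block-stable, $\G_{N}(\DD) = \G_{N}(\CC_{x})\ast H_{N}^{+}$ is free, and therefore $\mathcal{G}(\G_{N}(\DD))$ is trivial.

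Next I would produce a projective partition lying in $\CC$ but not in $\DD$. If every projective partition of $\CC$ were in $\DD$, then by Theorem \ref{thm:partitionrepresentations} the irreducible representations of $\G_{N}(\CC)$ would all come from $\DD$, so $C(\G_{N}(\CC))$ would be a quotient of $C(\G_{N}(\DD))$ by group-like relations; since $\mathcal{G}(\G_{N}(\DD))$ is trivial there are no nontrivial such relations, forcing $\CC = \DD$. As $\DD$ is the free product $\CC_{x}\ast\CC_{y}$, this contradicts the hypothesis that $\CC$ is not a free product. Hence there is a projective partition $p\in\CC\setminus\DD$, and I would choose one with the least possible number of blocks.

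Such a minimal $p$ is indecomposable: a horizontal factorisation $p = p'\otimes p''$ would have both factors in $\DD$ by minimality, whence $p\in\DD$. So the endpoints of each row of $p$ are connected, and (all blocks being monochromatic, since we are in the non-amalgamated case) Remark \ref{rem:noncrossingdecomposition} shows $p$ is built around a single monochromatic block. I would then strip $p$ down: every proper full subpartition of $p$ yields, via Proposition \ref{prop:intervalblockstable}, a projective partition with fewer blocks, hence in $\DD$, while neighbouring equal-coloured points can be removed by capping with $D_{yy}$ or $P_{xx}$. The aim is to reduce the core to a single $y$-block of size four interleaved with $x$-singletons; rotating the result then exhibits $P_{x}\otimes\pi(yy,yy)\otimes P_{x}^{*}$ (equivalently $P_{x}\otimes\pi(yy,yy)\otimes P_{x}$, up to rotation), as required.

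The hard part will be exactly this last reduction, namely controlling the $x$-coloured part of $p$. Because $\CC_{x}$ may fail to contain the large $x$-blocks that occur in $\CC$ once they are interleaved with $y$-points, the obstruction to $p\in\DD$ may be carried either by an $x$-block or by the way $x$-singletons sit inside a $y$-block, and one must show that every feature of $p$ other than a single size-four $y$-block over an $x$-singleton can be capped away without leaving $\CC$. This is precisely the point flagged in the text as ``the structure of $x$-coloured partitions, which is not easy to deal with'', and it is where the combinatorial work concentrates; the group-theoretic skeleton of the first two steps is, by contrast, a direct adaptation of the earlier results.
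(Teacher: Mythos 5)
Your skeleton---set $\DD=\CC_{x}\ast NC_{ev}$, use the quotient-by-group-like-relations argument to produce a projective partition $p\in\CC\setminus\DD$, take it minimal, and observe that it is indecomposable, hence a single monochromatic block with non-through-block partitions $b_{i}$ in the gaps---is exactly the paper's. But you stop where the statement actually gets proved: you announce the aim of reducing the core to a $y$-block interleaved with $x$-singletons and explicitly defer this as ``the hard part''. The missing mechanism is concrete and short. By minimality each $b_{i}^{*}b_{i}$ lies in $\DD$; since the only non-trivial one-dimensional representation of $\G_{N}(\DD)$ is $s_{x}=u_{P_{x}^{*}P_{x}}$, of order two, each $b_{i}$ is, up to an equivalence implemented inside $\DD$, either removable or replaceable by $P_{x}$; because $p\notin\DD$ at least one genuine $P_{x}$ must survive and the surrounding block must then be coloured $y$; finally one caps the $y$-block down to size four (splitting into the through-block and non-through-block cases and invoking the rotate-and-concatenate lemmas of \cite{freslon2013fusion}), which exhibits a rotated version of $P_{x}\otimes\pi(yy,yy)\otimes P_{x}$. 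None of this is in your proposal, so the proof is incomplete precisely at its load-bearing step.

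There is moreover a misstep that would prevent you from completing the argument along your own lines: the claim that $\mathcal{G}(\G_{N}(\DD))$ is trivial. If that group were trivial, every $b_{i}$ would have trivial associated representation, hence would itself lie in $\DD$ (a non-through-block projective partition has trivial $u_{p}$ exactly when its upper row belongs to the category), and composing the bare block with $\pi(a,a)\otimes b_{1}\otimes\cdots\otimes b_{n-1}\otimes\pi(a,a)$ would reconstruct $p$ inside $\DD$, contradicting the choice of $p$. Worse, if the singleton $P_{x}$ itself lies in $\CC$ then $P_{x}\otimes\pi(yy,yy)\otimes P_{x}$ is already a rotated version of $\bigl[\pi(y,y)\otimes P_{x}^{*}\otimes\pi(y,y)\bigr]\pi(yy,yy)\bigl[\pi(y,y)\otimes P_{x}\otimes\pi(y,y)\bigr]\in\CC_{x}\ast NC_{ev}$, so the lemma would hold vacuously and the ``not a free product'' hypothesis would be idle. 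The situation the lemma is really about---consistently with Lemma \ref{lem:symetrichasfourblock}, which supplies $P_{xx}$, and with the proposition that follows, where the commutation relation $s_{x}v_{ij}=v_{ij}s_{x}$ is non-trivial---is the one where the \emph{double} singleton $P_{xx}$ lies in $\CC$ but $P_{x}$ does not, so that $\CC_{x}\in\{\CC_{B^{+\sharp}},\CC_{B^{+\prime}},NC'\}$ and $s_{x}$ has order two. The non-triviality of $s_{x}$ is exactly what forces a $P_{x}$ to survive in a gap and produces the conclusion; your identification of $\CC_{x}$ and of $\mathcal{G}(\G_{N}(\DD))$ has to be corrected before the reduction can even be set up.
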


\begin{proof}
Writing $\DD = \CC_{x}\ast NC_{ev}$, the same argument as in Lemma \ref{lem:bistochasticnotfreeproduct} shows that if $\CC$ is not a free product then there is a projective partition in $\CC\setminus\DD$. Let $p$ be such a projective partition of minimal size. It cannot be decomposed as a tensor product $p = p'\otimes p''$ by minimality, so that the endpoints of each row must be connected. We therefore have a block with points coloured $a$ and between these points partitions $b_{i}$ lying on one line. By minimality $b_{i}^{*}b_{i}\in \DD$ so that again as in Lemma \ref{lem:bistochasticnotfreeproduct} we can either remove $b_{i}$ or replace it with $P_{x}$. Since $p\notin\DD$, there is at least one $P_{x}$ and $a=y$.

Let us assume that the $y$-block is a through-block. Then, using \cite[Lem 4.1]{freslon2013fusion} we can rotate and concatenate until we have a rotated  version of $P_{x}\otimes \pi(yy, yy)\otimes P_{x}$. If now the $y$-block is not a through-block, there are again two cases. If the $y$-blocks have size two, we have a rotated version of $P_{x}\otimes\pi(y, y)\otimes\pi(y, y)\otimes P_{x}$ and concatenating with $\pi(yy, yy)$ yields the result. Otherwise, since there is at least one $i$ such that $b_{i} = P_{x}$, by \cite[Lem 4.2]{freslon2013fusion} we can produce a rotated version of $P_{x}\otimes\pi(yy, yy)\otimes P_{x}$ in $\CC$.
\end{proof}

For the classification, let us first assume that $q_{k}\notin\CC$ for any $k\in\N$. Then, there is nothing that one can add. To state this more precisely, let us set $v=u_{\pi(yy, yy)}$.

\begin{prop}
Let $N\geqslant 4$ be an integer and let $\CC$ be a category of noncrossing partitions containing an odd block coloured with $x$ and such that $\CC_{y} = NC_{ev}$. If $q_{k}\notin\CC$ for all $k\in \N$ and $\CC$ is not a free product, then there exists $\G\in\{S_{N}^{+}, S_{N}^{+\prime}, B_{N}^{+}, B_{N}^{+\prime}, B_{N}^{+\sharp}\}$ such that $C(\G_{N}(\CC))$ is the quotient of $C(\G)\ast C(H_{N}^{+})$ by the relations
\begin{equation*}
s_{x}v_{ij} = v_{ij}s_{x}
\end{equation*}
for all $1\leqslant i, j\leqslant \dim(v)$.
\end{prop}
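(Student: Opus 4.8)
The plan is to follow the strategy of this section: build a subcategory $\CC'\subseteq\CC$ whose quantum group is the one in the statement and which contains every projective partition of $\CC$, and then dispose of the remaining partitions as group-like relations. Denote by $\CC_{x}$ and $\CC_{y}$ the partitions of $\CC$ coloured only by $x$, resp.\ only by $y$. By hypothesis $\CC_{y}=NC_{ev}$, so $\G_{N}(\CC_{y})=H_{N}^{+}$, while, since $P_{xx}\in\CC_{x}$, the classification of noncrossing orthogonal easy quantum groups identifies $\G:=\G_{N}(\CC_{x})$ with one of $S_{N}^{+},S_{N}^{+\prime},B_{N}^{+},B_{N}^{+\prime},B_{N}^{+\sharp}$; moreover $s_{x}$ is a well-defined group-like element of order at most two, and the assumption that $\CC$ is not a free product will force $s_{x}$ to be nontrivial (otherwise, as we will see in the last step, $\CC$ collapses to a free product). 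Set $\DD=\CC_{x}\ast NC_{ev}$, so that $C(\G_{N}(\DD))=C(\G)\ast C(H_{N}^{+})$, and let $\CC'=\langle\DD,p_{1}\rangle$. Proposition \ref{prop:commutationrelations}, applied with $t=s_{x}$ and $u=v$, shows that adjoining $p_{1}$ to $\DD$ is exactly imposing the relations $s_{x}v_{ij}=v_{ij}s_{x}$, so that $C(\G_{N}(\CC'))$ is the quantum group of the statement. It therefore suffices to prove $\CC=\CC'$.

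First I establish $\CC'\subseteq\CC$, i.e.\ $p_{1}\in\CC$. Since $\CC$ is not a free product, $\DD\subsetneq\CC$ and one may, as in the proofs of Lemmas \ref{lem:bistochasticnotfreeproduct} and \ref{lem:nonamalgamatedsymetricnotfree}, choose a projective partition $p\in\CC\setminus\DD$ of minimal size. By minimality $p$ is not a horizontal concatenation, so the endpoints of each row are connected and form a single monochromatic block with subpartitions $b_{i}$ in between, each of which satisfies $b_{i}^{*}b_{i}\in\DD$. If the outer block were coloured by $x$, or were a $y$-block that is not a through-block, then after removing or replacing the $b_{i}$ by carriers of $s_{x}$ the partition $p$ would either lie in $\CC_{x}\subseteq\DD$ or be of the excluded type $q_{k}$; both are impossible. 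Hence the outer block is a $y$-through-block carrying at least one nested nontrivial $x$-contribution. Capping it down to four through-points with $D_{yy}$ as in \cite[Lem 4.2]{freslon2013fusion} and using that $s_{x}$ has order two, one reaches a rotation of $P_{x}\otimes\pi(yy,yy)\otimes P_{x}$, that is $p_{1}\in\CC$ by Lemma \ref{lem:pk}; the same lemma then yields $p_{k}\in\CC$ for all $k$, i.e.\ $s_{x}$ commutes with every $u_{\pi(y^{k+1},y^{k+1})}$.

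The main obstacle is the reverse inclusion: I must show that every projective partition of $\CC$ already lies in $\CC'$. Decomposing it as in Remark \ref{rem:noncrossingdecomposition} into non-through-block and through-block pieces with connected endpoints, I treat the two types separately. For a non-through-block piece, the argument of the previous paragraph read backwards, together with $q_{k}\notin\CC$, shows that no mixed one-dimensional representation can appear: the piece reduces to a pure $x$-partition in $\CC_{x}$ or to a pure $y$-partition in $NC_{ev}$, hence lies in $\DD$; in particular the group of group-like elements of $\G_{N}(\CC)$ is exactly $\langle s_{x}\rangle$. For a through-block piece I argue by induction on the size of its outer block, in the spirit of Steps 1 and 2 of Proposition \ref{prop:symmetricbybistochastic}: an $x$-through-block absorbs its (necessarily trivial) nested $y$-content and reduces to $\CC_{x}$, while a $y$-through-block with nested $x$-singletons is rebuilt from the partitions $p_{k}\in\CC'$ using the sliding and merging manipulations of Lemma \ref{lem:pk}. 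The delicate point, absent from Proposition \ref{prop:symmetricbybistochastic}, is that the $y$-blocks can be arbitrarily large, so the single commutation relation with $v$ has to be transported to all even $y$-blocks; this is exactly the content of the equivalence of the three conditions in Lemma \ref{lem:pk}.

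With all projective partitions of $\CC$ lying in $\CC'$, the quantum group $\G_{N}(\CC)$ is a quotient of $\G_{N}(\CC')$ by group-like relations, as in Step 4 of Theorem \ref{thm:biorthogonalclassification}. Because the group of group-like elements is $\langle s_{x}\rangle$, the only possible such relation is $s_{x}=1$, that is $P_{x}\in\CC$. But $P_{x}\in\CC$ would put $P_{x}$ in $\CC_{x}$, forcing $\G\in\{S_{N}^{+},B_{N}^{+}\}$ and $s_{x}=1$ already in $\DD$; then $p_{1}$ lies in $\DD$, so $\CC'=\DD$ and $\CC=\DD$ is a free product, contrary to hypothesis. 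Hence no nontrivial group-like relation occurs, $\CC=\CC'$, and $C(\G_{N}(\CC))$ is the quotient of $C(\G)\ast C(H_{N}^{+})$ by $s_{x}v_{ij}=v_{ij}s_{x}$, as claimed.
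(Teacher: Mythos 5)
Your proof is correct and follows essentially the same route as the paper's: the same subcategory $\langle\CC_{x}\ast NC_{ev},\, p_{1}\rangle$, the same use of $q_{k}\notin\CC$ to push every projective partition of $\CC$ into it, and the same reduction of all remaining partitions to the single possible group-like relation $s_{x}=1$ (which you rule out by contradiction with non-freeness, where the paper instead absorbs it into the choice of $\G$; both work). The one slip is in your derivation of $p_{1}\in\CC$: a minimal projective partition outside $\DD$ whose outer block is a \emph{non-through} $y$-block with $P_{x}$ in some but not all of its gaps is neither in $\CC_{x}$ nor of the excluded type $q_{k}$, so your ``both are impossible'' does not dispose of that case --- but that case also yields $p_{1}\in\CC$ by capping the $y$-block down, which is exactly what Lemma \ref{lem:nonamalgamatedsymetricnotfree} (which you cite) does, so the conclusion is unaffected.
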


\begin{proof}
\noindent\textbf{Step 1.} Let $\DD$ be the category of partitions generated by $\CC_{x}\ast NC_{ev}$ and $P_{x}\otimes\pi(yy, yy)\otimes P_{x}^{*}$. We claim that all projective partitions in $\CC$ lie in $\DD$. By contradiction, consider a non-through-block projective partition $p = q^{*}q$ of minimal size among those not belonging to $\DD$. As in the proof of Lemma \ref{lem:nonamalgamatedsymetricnotfree}, we can assume that $q$ is a $y$-block with either nothing or $P_{x}$ between the points. Let us prove by induction on the number of points that such a partition is always in $\DD$. If there are two $y$-points then the partition  cannot contain a singleton since it would be equal to $q_{1}\notin\CC$. Thus there is nothing between the points and it therefore belongs to $\DD$. If there are more points, there cannot be singletons between each point since otherwise it would be equal to $q_{k}\notin\CC$. Thus there are at least two neighbouring $y$-points. We can then remove them by concatenating with $D_{yy}$, producing an equivalent partition which, by assumption, is in $\DD$. The original partition can be recovered from this one by concatenating with $\pi(y, yyy)$, hence is also in $\DD$.\\

\noindent\textbf{Step 2.} Consider now a through-block projective partition $p$ with $t(p) = 1$. Again we can assume that the through-block is coloured with $y$ and that there are either nothing or $x$-singletons between the points and the same argument as in Lemma \ref{lem:pk} shows that $p\in \DD$. Since any projective partition in $\CC$ can be written as an horizontal concatenation of those studied in \textbf{Step 1.} and \textbf{Step 2.}, we conclude that they all belong to $\DD$.\\

\noindent\textbf{Step 3.} As a consequence of the two previous steps, $C(\G_{N}(\CC))$ is a quotient of $C(\G)\ast C(H_{N}^{+})$ by the relations in the statement plus some group-like relations. However, we have seen in \textbf{Step 1.} that non-through-block projective partitions are always equivalent either to a $y$-block (which yields the trivial representation) or to an $x$-singleton. All we can do is therefore add $P_{x}$ to $\CC$, but this is already in $\CC_{x}\subset\CC$.
\end{proof}

We now assume that $q_{k}\in \CC$ for some $k \geqslant 1$. Let us first see which other $q_{k'}$'s can then be in $\CC$.

\begin{lem}\label{lem:qpartitions}
Let $\CC$ be a category of noncrossing partitions such that $q_{k}\in \CC$ for some $k$. Then, the following hold :
\begin{enumerate}
\item $p_{1}\in \CC$.
\item If $q_{k}, q_{k'}\in \CC$, then $q_{k+k'+1}\in \CC$ and $q_{k+k'+1}\sim q_{k}\otimes P_{x}^{*}P_{x}\otimes q_{k'}$,
\item If $q_{k}, q_{k'}\in \CC$ for $k>k'$, then $q_{k-k'-1}\in \CC$ and $q_{k}\sim q_{k'}\otimes P_{x}^{*}P_{x}\otimes q_{k-k'-1}$.
\end{enumerate}
As a consequence, there exists an integer $k_{0}$ such that $\{k \mid q_{k}\in\CC\} = \{nk_{0} + n-1 \mid n\in \N\}$.
\end{lem}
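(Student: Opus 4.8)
The plan is to establish the three partition-theoretic statements (1)--(3) by explicit category operations and then to read off the arithmetic description from (2) and (3) by an elementary semigroup argument. Throughout I use the standing hypotheses of this case, namely $P_{xx}\in\CC$, $\pi(yy,yy)\in\CC$ and $\CC_y=NC_{ev}$ (so that $D_{yy}\in\CC$), together with the observation that, since $k\geqslant 1$, each $x$-singleton of $q_k$ is a full subpartition lying on one line, whence $P_x^*P_x\in\CC$ by Proposition \ref{prop:intervalblockstable}.

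For (1) I would manufacture the through-block partition $p_1$ out of the single non-through-block partition $q_k$. This is the delicate starting point, because the $x$-singletons of $q_k$ separate the $y$-points and therefore block any naive capping by $D_{yy}$ or $\pi(yy,yy)$; note also that the sliding (``jumping'') property of Lemma \ref{lem:pk} is not yet available, since it presupposes exactly the partition $p_1$ we are trying to produce. The strategy is a reduction in the spirit of \cite[Lem. 4.1 and 4.2]{freslon2013fusion}: rotate $q_k$ onto one line, cap the two resulting $y$-blocks together with $D_{yy}$ so as to merge them, and keep reducing the size of the $y$-block while carrying the $x$-singletons along, until a single $y$ through-pair flanked by one $x$-singleton on each side survives. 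The outcome is a rotated version of $P_x\otimes\pi(yy,yy)\otimes P_x^*$, that is of $p_1$, so $p_1\in\CC$. By Lemma \ref{lem:pk} this gives $p_k\in\CC$ for all $k$ and, crucially, makes the jumping property available for the rest of the proof.

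For (2) I first note that $Q:=q_k\otimes P_x^*P_x\otimes q_{k'}\in\CC$, being a horizontal concatenation of partitions of $\CC$. Its upper row carries the same colouring as that of $q_{k+k'+1}$, the only discrepancy being that its $y$-points form two blocks, of sizes $k+1$ and $k'+1$, rather than a single block of size $k+k'+2$. Using the connector $p_1$ from (1), I would build a (non-through) partition $r\in\CC$ whose upper structure is that of $Q$ and whose lower structure is that of $q_{k+k'+1}$, obtained by composing $Q$ below with one copy of $p_1$ so as to merge its two lower $y$-blocks into one. Since $r$ has no through-block, $r^*r=Q$ while $rr^*=q_{k+k'+1}$, so that $q_{k+k'+1}=rr^*\in\CC$ and the displayed equivalence $q_{k+k'+1}\sim q_k\otimes P_x^*P_x\otimes q_{k'}$ holds; the only point needing care is the usual loop count $\rl$.

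Statement (3) is the genuine obstacle, since here one must produce a new partition $q_{k-k'-1}$ that is not obtained by concatenation. Given $q_k,q_{k'}\in\CC$ with $k>k'$, the plan is to cancel $q_{k'}$ from $q_k$: rotate $q_{k'}$ onto one line and compose the reflected $y$-block of size $k'+1$ against the first $k'+1$ $y$-points of the $y$-block of $q_k$, so that the vertical concatenation turns them into loops which are then removed; the careful bookkeeping of $\rl$ and of the single $x$-singleton consumed at the junction is the technical heart, and leaves precisely $q_{k-k'-1}\in\CC$. The displayed equivalence $q_k\sim q_{k'}\otimes P_x^*P_x\otimes q_{k-k'-1}$ then follows from (2) applied to the indices $k'$ and $k-k'-1$. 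Finally, for the arithmetic conclusion, set $S=\{k+1\mid q_k\in\CC\}\subset\Z_{>0}$. Statement (2) shows that $S$ is stable under addition (as $(k+1)+(k'+1)=(k+k'+1)+1$) and statement (3) that it is stable under subtracting a smaller element from a larger one (as $(k+1)-(k'+1)=(k-k'-1)+1$); a nonempty subset of $\Z_{>0}$ with both properties is exactly $g\,\Z_{>0}$ for $g=\min S$. Writing $g=k_0+1$ with $k_0=\min\{k\mid q_k\in\CC\}$, the condition $q_k\in\CC$ becomes $k+1\in(k_0+1)\Z_{>0}$, i.e.\ $k=n(k_0+1)-1=nk_0+n-1$ for some $n\geqslant 1$, which is the claimed description. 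I expect the bootstrapping in (1) and the cancellation in (3) to be the two steps requiring real care.
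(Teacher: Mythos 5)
Your overall strategy coincides with the paper's proof. Part (2) is established there by exactly the connector construction you describe: one composes $q_{k}\otimes P_{x}^{*}P_{x}\otimes q_{k'}$ with an identity-padded copy of $p_{1}$ placed over the junction, and reads off the two projections $r^{*}r$ and $rr^{*}$ of the resulting non-through partition. Part (3) is likewise a single vertical concatenation of $q_{k}$ with $q_{k'}$ padded by identity strings, and the arithmetic conclusion is the same Euclidean argument (the paper states it more tersely, but your semigroup formulation with $S=\{k+1\mid q_{k}\in\CC\}$ is correct). Two bookkeeping remarks on your (3): what must be composed against $q_{k}$ is the full projective partition $q_{k'}$ tensored with identities, not merely its ``reflected $y$-block'', since a single row of $q_{k'}$ need not belong to $\CC$; and no loops actually arise in this composition --- the merged middle block survives as a block on the lower row of the resulting non-through partition, which is precisely how one reads off the equivalence $q_{k}\sim q_{k'}\otimes P_{x}^{*}P_{x}\otimes q_{k-k'-1}$ directly (your detour through (2) is also fine once membership of $q_{k-k'-1}$ is secured).

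The genuine gap is in part (1), for $k=1$. Your procedure merges the two $y$-blocks of the one-line rotation of $q_{k}$ by capping two adjacent $y$-points with $D_{yy}$ (and the two resulting adjacent $x$-singletons with $P_{xx}$); this consumes two $y$-points, so the merged block has size $2k$ rather than $2k+2$. For $k\geqslant 2$ this produces a rotated version of $p_{k-1}$, and the descent can then continue to $p_{1}$ --- this is exactly what the paper does, delegating the descent to Lemma \ref{lem:pk}. But for $k=1$ the merge yields a $2$-block, essentially $D_{yy}$, and $p_{1}$ is not obtained: one cannot manufacture a $4$-block out of two $2$-blocks by an operation that deletes two of the four available $y$-points. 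The paper treats $k=1$ separately by a move of a different nature: it rotates $q_{1}$ into $P_{x}\otimes\pi(y,y)\otimes\pi(y,y)\otimes P_{x}$ and then fuses the two through-strings, without deleting any point, by vertical concatenation with $\pi(yy,yy)$ (available since $\CC_{y}=NC_{ev}$), which gives a rotated $p_{1}$. Since $k_{0}=1$ is a perfectly possible --- indeed the most basic --- case, this separate argument cannot be omitted.
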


\begin{proof}
To prove $(1)$ we distinguish two cases. If $q_{1}\in \CC$, then rotating it we get $P_{x}\otimes\pi(y, y)\otimes\pi(y, y)\otimes P_{x}$. Concatenating with $\pi(yy, yy)$ then yields a rotated version of $p_{1}$. If $q_{k}\in \CC$ for some $k\geqslant 2$, rotating it on one line and concatenating with $D_{yy}$ we can glue the blocks to produce a rotated version of $p_{k-1}$ and the result again holds by Lemma \ref{lem:pk}.

For $(2)$, note that
\begin{align*}
r & = \left[q_{k}\otimes P_{x}^{*}P_{x}\otimes q_{k'}\right]\left[\pi(y, y)\otimes\pi(x, x))^{\otimes k}\otimes p_{1}\otimes (\pi(y, y)\otimes\pi(x, x))^{\otimes k'}\otimes\pi(y, y)\right] \\
& \in \CC
\end{align*}
satisfies $r^{*}r = q_{k+k'+1}$ and $rr^{*} = q_{k}\otimes P_{x}^{*}P_{x}\otimes q_{k'}$, hence the result. For $(3)$, simply concatenate $q_{k}$ with $q_{k'}\otimes (\pi(x, x)\otimes\pi(y, y))^{\otimes 2(k-1)}$ to get the equivalence.

Now, if $k_{0}$ is the smallest $k$ such that $q_{k}\in \CC$, it follows from point $(3)$ that any other $k$ is of the form $nk_{0} + (n-1)$ while point $(2)$ shows that all such $k$'s occur.
\end{proof}

The classification result of this subsection is that once $q_{k}$ has been added to the free product, all we can do is add commutation relations with $u^{xx}$ and group-like relations. To prove this, we will have to show that some projective partitions in a category belong to a given subcategory. For the sake of clarity, we split this argument using the following definition.

\begin{de}
Let $NC_{1, 2}$ be the category of all noncrossing partitions with blocks of size at most two. A projective partition $p$ is said to be \emph{$x$-simple} if it is equivalent in $NC_{1, 2}\ast NC_{ev}$ to a partition in which all $x$-blocks are singletons.
\end{de}

\begin{lem}\label{lem:xsimplepartitions}
Let $\CC$ be a category of noncrossing partitions containing $q_{k}$ for some $k > 0$ and let $\CC'\subset \CC$ be the category of partitions generated by $NC_{1, 2}\ast NC_{ev}$ and $q_{k}$. Then, any $x$-simple projective partition in $\CC$ lies in $\CC'$.
\end{lem}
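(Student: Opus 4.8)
The plan is to peel an $x$-simple projective partition down to a horizontal concatenation of elementary pieces, each of which can be placed in $\CC'$ using the material already developed. I would begin by exploiting $x$-simplicity itself: by definition $p$ is equivalent, through a partition lying in $NC_{1,2}\ast NC_{ev}\subseteq\CC'$, to a projective partition $p'$ all of whose $x$-blocks are singletons. Since $\CC'$ is stable under the category operations, membership of $p$ in $\CC'$ is governed by that of $p'$, so I may and will assume from the start that every $x$-block of $p$ is a singleton.

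I would then apply the decomposition of Remark \ref{rem:noncrossingdecomposition}, writing $p$ as a horizontal concatenation of projective partitions that are either non-through-block partitions with connected endpoints or through-block partitions with a single through-block and connected endpoints. Since horizontal concatenation keeps us inside $\CC'$, it is enough to treat one factor at a time. A factor reduced to a single $x$-singleton is $P_x^{\ast}P_x\in NC_{1,2}\subseteq\CC'$, and a factor whose points are all coloured by $y$ lies in $NC_{ev}\subseteq\CC'$; the only substantial case is a factor made of one $y$-block (through or not) with $x$-singletons nested between its consecutive points.

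For such a factor the idea is to identify it, up to $\CC'$-equivalence, with one of the partitions $p_{k'}$ or $q_{k'}$. Concatenating with $P_{xx}$ and using the ``jumping'' property established in the proof of Lemma \ref{lem:pk}, I would normalise the pattern of nested singletons so that the factor becomes $q_{k'}$ (non-through case) or $p_{k'}$ (through case) for a suitable $k'$, after absorbing any purely even $y$-part into $NC_{ev}$. Applying Proposition \ref{prop:intervalblockstable} to the corresponding full subpartition of $p\in\CC$ shows that $q_{k'}\in\CC$; taking $k$ to be the smallest index with $q_k\in\CC$, Lemma \ref{lem:qpartitions} then forces $k'\in\{nk+n-1 : n\in\N\}$, and points $(1)$ and $(2)$ of the same lemma, applied inside $\CC'$ to the generator $q_k$, show that all such $q_{k'}$ and all $p_{k'}$ already belong to $\CC'$. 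Reassembling the factors gives $p\in\CC'$.

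I expect the third step to be the real obstacle. The delicate point is that only the parity of the nested $x$-singletons in each region between two connected $y$-points is an invariant, so one has to check that $P_{xx}$ and the jumping property really do bring an arbitrary admissible singleton pattern to the standard pattern of $q_{k'}$, and that the resulting index $k'$ is exactly one realisable from the fixed generator $q_k$ through the arithmetic of Lemma \ref{lem:qpartitions}. This is precisely where the minimality of $k$ and the equivalences of Lemma \ref{lem:qpartitions} are used in an essential way.
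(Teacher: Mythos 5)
Your architecture is sound and its endpoints match the paper's: reduce to the case where all $x$-blocks are singletons, split into horizontal factors via Remark \ref{rem:noncrossingdecomposition}, identify the nontrivial factors with $q_{k'}$ or $p_{k'}$, and invoke Lemma \ref{lem:qpartitions} to place these in $\CC'$. Your treatment of the last point (minimality of $k$ together with the arithmetic $\{nk_{0}+n-1\}$) is in fact more explicit than the paper's, which merely asserts that $q_{k'}$ and $p_{k'}$ are ``already in $\CC'$''. But the heart of the lemma is precisely the step you flag as ``the real obstacle'', and you leave it unproven: you assert, without verification, that concatenating with $P_{xx}$ and using the jumping property of Lemma \ref{lem:pk} normalises an arbitrary factor to $q_{k'}$ or $p_{k'}$. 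As written this has two problems. First, a factor with connected $y$-endpoints may contain, between consecutive points of its outer $y$-block, sub-partitions $b_{i}$ mixing $x$-singletons with \emph{nested} $y$-blocks; ``absorbing the purely even $y$-part into $NC_{ev}$'' is not a single operation, since the singletons may sit inside those nested blocks and extracting them is exactly the kind of move you are trying to justify. Second, the jumping property only carries a singleton across two \emph{adjacent} connected $y$-points, so it does not redistribute singletons among gaps that are all occupied; moreover it changes the parities of the two gaps involved, so the per-gap parity you propose as an invariant is not one, and the correct invariant (the class of the factor in the group of one-dimensional representations of $\G_{N}(\CC')$) is never identified.

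The paper closes this gap by a different and simpler mechanism: take $p\in\CC\setminus\CC'$ $x$-simple of minimal size. Minimality forces the endpoints of each row to be connected and, crucially, puts every $b_{i}^{*}b_{i}$ into $\CC'$, since it is a smaller $x$-simple projective partition. Each $b_{i}$ may then be replaced, up to equivalence in $\CC'$, by a standard representative, and the dichotomy is: either every gap carries exactly one $x$-singleton, in which case $p$ equals $q_{k'}$ or $p_{k'}$ and lies in $\CC'$, or some gap is empty, in which case the two adjacent connected $y$-points can be capped by $D_{yy}$, producing a partition equivalent to $p$ in $NC_{1,2}\ast NC_{ev}$ but strictly smaller --- a contradiction. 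This one-step reduction avoids the jumping property and the normalisation of singleton patterns altogether. If you wish to keep your direct formulation, you must at least run the induction on the number of points (so that the nested $b_{i}$'s are covered) and prove that each $b_{i}$ is $\CC'$-equivalent to $\emptyset$ or $P_{x}$ (or to a standard $q_{k''}$-row); that is the missing idea.
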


\begin{proof}
By contradiction, let $p$ be a projective $x$-simple partition in $\CC\setminus\CC'$ of minimal size. By minimality it cannot be written as a tensor product of two projective partitions so that its endpoints are connected. Because $p$ is $x$-simple, we may assume that all $x$-points are singletons, hence the endpoints are coloured by $y$. Let us first assume that $t(p) = 0$ and write $p = q^{*}q$. Thus, $q$ consists in a $y$-block with a partition $b_{i}$ between the points. If $b_{i} = P_{x}$ for all $i$, then we have $q_{k}$ which is already in $\CC'$. Otherwise, we can concatenate with $D_{yy}$ to produce a projective partition which is equivalent to $p$ in $NC_{1, 2}\ast NC_{ev}$ but smaller, a contradiction. Assuming now that $t(p) = 1$, the same argument works : if $b_{i} = P_{x}$ for all $i$ then $p = p_{k}$ which is in $\CC'$ by Lemma \ref{lem:qpartitions} and otherwise we can reduce $p$, contradicting minimality.
\end{proof}

We will now complete the classification. To do this, we have to consider general projective partitions in $\CC$, which should amount to commutation relations and then add group-like relations. The case where $x$-blocks have size at most two is simpler, so that we treat it separately. Before that, note that adding $q_{k}$ may be seen as a commutation relation. Indeed, thanks to Lemma \ref{lem:qpartitions} we may always assume that we are adding $q_{k}$ for some odd $k = 2k'-1$. Then, rotating it gives an equivalence between $P_{x}\otimes p_{k'-1}\otimes p_{k'-1}$ and $p_{k'-1}\otimes p_{k'-1}\otimes P_{x}$. Let us introduce some notations to make things clear :

\begin{de}
Let $\CC_{x}\subset NC_{1, 2}$ and let $\CC$ be a category of noncrossing partitions containing $\CC_{x}\ast NC_{ev}$ and $q_{2k-1}$ for some $k > 0$. We will denote by $w^{(k)}$ the following representation of $\G_{N}(\CC)$ :
\begin{equation*}
w^{(k)} = u_{p_{k-1}\otimes p_{k-1}}
\end{equation*}
with the convention that $w^{(1)} = u^{y}\otimes u^{y}$.
\end{de}

We can now state our result for $\CC_{x}\subset NC_{1, 2}'$, where $NC_{1, 2}'$ is the set of partitions with blocks of size at most two with an even number of odd blocks.

\begin{prop}\label{prop:bistochasticbyhyperoctahedral}
Let $N\geqslant 4$ and let $\CC$ be a category of noncrossing partitions such that all $x$-blocks have size at most two, $\CC_{y} = NC_{ev}$ and $q_{k}\in\CC$. Then, there exists $\G\in \{B_{N}^{+\sharp}, B_{N}^{+\prime}, B_{N}^{+}\}$ such that $C(\G_{N}(\CC))$ is a quotient of $C(\G)\ast C(H_{N}^{+})$ by the relations
\begin{equation*}
s_{x}w^{(k)}_{ij} = w^{(k)}_{ij}s_{x}
\end{equation*}
for all $1\leqslant i, j\leqslant \dim(w^{(k)})$ and one or several of the following relations :
\begin{itemize}
\item $t u^{xx}_{ij} = u^{xx}_{ij}t$ for all $1\leqslant i, j\leqslant \dim(u^{xx})$, where $t$ is some group-like element,
\item group-like relations.
\end{itemize}
\end{prop}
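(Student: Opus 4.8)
The plan is to follow the three-step scheme of Proposition~\ref{prop:symmetricbybistochastic}: isolate an explicit base category and then reach all of $\CC$ by commutation and group-like relations. First I would pin down the orthogonal factor. Since every $x$-block has size at most two and the $x$-singletons occurring inside $q_{k}$ are full subpartitions, Proposition~\ref{prop:intervalblockstable} gives $P_{xx}\in\CC$; hence the one-coloured category $\CC_{x}$, which has blocks of size at most two and contains $P_{xx}$, is one of $\CC_{B^{+\sharp}}$, $\CC_{B^{+\prime}}$, $\CC_{B^{+}}$ by the classification of noncrossing orthogonal easy quantum groups, and each of these contains $D_{xx}$. I set $\G=\G_{N}(\CC_{x})$. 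Using Lemma~\ref{lem:qpartitions}, which describes the set of $k'$ with $q_{k'}\in\CC$ and shows it is generated by a single (odd) $q_{k}$, I let $\CC_{0}$ be the category generated by $\CC_{x}\ast NC_{ev}$ and $q_{k}$, so that $\CC_{0}$ contains every $q_{k'}\in\CC$. As free products of categories correspond to free products of quantum groups, $\G_{N}(\CC_{x}\ast NC_{ev})=\G\ast H_{N}^{+}$, and rotating the generating $q$ onto one line yields, as explained before the definition of $w^{(k)}$, exactly the commutation of $s_{x}$ with the coefficients of $w^{(k)}$; thus $\G_{N}(\CC_{0})$ is precisely the quotient of $C(\G)\ast C(H_{N}^{+})$ by the relations $s_{x}w^{(k)}_{ij}=w^{(k)}_{ij}s_{x}$.

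\textbf{Reduction to projective partitions and the $x$-simple case.} Next, exactly as in Theorem~\ref{thm:biorthogonalclassification} and Proposition~\ref{prop:symmetricbybistochastic}, it suffices to exhibit an intermediate category $\CC_{0}\subset\CC'\subset\CC$ with $\G_{N}(\CC')$ a quotient of $\G_{N}(\CC_{0})$ by relations $tu^{xx}_{ij}=u^{xx}_{ij}t$, such that every projective partition of $\CC$ already lies in $\CC'$; any remaining partition of $\CC$, once rotated onto one line, then produces only a group-like relation, and the theorem follows. By Remark~\ref{rem:noncrossingdecomposition} I may restrict to indecomposable projective partitions, i.e. those whose two rows have connected endpoints, and split according to $t(p)\in\{0,1\}$; the endpoint block is then a $y$-block of even size or the $x$-pair $D_{xx}$. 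When $p$ is $x$-simple, the reduction in Lemma~\ref{lem:xsimplepartitions} to a partition all of whose $x$-blocks are singletons uses only $D_{xx}\in\CC_{x}$, $D_{yy}\in NC_{ev}$ and $q_{k}$, so it takes place inside $\CC_{0}$ and places $p$, which is equivalent to a horizontal concatenation of $q_{k'}$'s and $NC_{ev}$-blocks, in $\CC_{0}$.

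\textbf{The main obstacle.} The core difficulty is the non-$x$-simple case, where an $x$-pair is genuinely interlocked with an even $y$-block and cannot be resolved into singletons. Here I would follow Step~1 of Proposition~\ref{prop:symmetricbybistochastic}: isolating such an $x$-pair together with the full subpartition $b$ nested inside it, I observe that $u_{b^{*}b}=:t$ is a group-like element because $b$ lies on one line, and the computation of Proposition~\ref{prop:commutationrelations} applied to $b\otimes\pi(xx,xx)\otimes\overline{b}^{*}$ shows that adjoining $p$ to $\CC_{0}$ is the same as imposing $tu^{xx}_{ij}=u^{xx}_{ij}t$. The real obstacle is the converse inclusion: one must prove, by induction on the number of blocks of $p$, that $p$ can be recovered inside $\CC$ from $\CC_{0}$ together with the extracted commutation relations. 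This demands re-gluing the $y$-block around the isolated $x$-pair by capping with partitions of type $\pi(y\cdots y,y)$ and by the ``jumping'' moves of Lemma~\ref{lem:pk}, while checking that the reduction never manufactures a $q_{k'}\notin\CC$ nor an unwanted through-block—precisely the bookkeeping that makes this case more delicate than the one treated in Proposition~\ref{prop:symmetricbybistochastic}. Once this induction is established, the totality of the relations so obtained defines $\CC'$, every projective partition of $\CC$ lies in $\CC'$, and rotating the remaining partitions of $\CC$ onto one line supplies the group-like relations, yielding the stated presentation of $C(\G_{N}(\CC))$.
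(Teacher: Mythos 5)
Your overall strategy coincides with the paper's: the same base category $\DD$ generated by $\CC_{x}\ast NC_{ev}$ and $q_{k}$, the identification of adding $q_{k}$ with the commutation of $s_{x}$ and $w^{(k)}$, the reduction to indecomposable projective partitions, the extraction of the relations $tu^{xx}_{ij}=u^{xx}_{ij}t$ from partitions whose row endpoints form an $x$-pair with a subpartition $b$ nested inside, and the final passage to group-like relations. The problem is that the central step --- proving that every projective partition of $\CC$ really does lie in the intermediate category $\CC'$ obtained from $\DD$ by adjoining those commutation partitions --- is not carried out. You name it yourself as ``the real obstacle'', sketch it as a matter of capping, jumping moves and bookkeeping about which $q_{k'}$ belong to $\CC$, and then write ``once this induction is established''. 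That induction \emph{is} the proof of the proposition; as written, the proposal stops exactly where the work begins.

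For the record, the paper closes this gap with a short explicit argument, by induction on the number of blocks, that needs none of the machinery you anticipate. For an indecomposable projective $p$ whose endpoints form a $y$-block one may assume the colouring alternating; if $t(p)=0$ and $p=q^{*}q$ with $q$ having $2k'+1$ points, then $p_{k'}pp_{k'}=q_{k'}$, so $q_{k'}\in\CC$ and hence $q_{k'}\in\DD$ by Lemma \ref{lem:qpartitions}; the subpartitions $b_{i}$ sitting between the points of $q$ satisfy $b_{i}^{*}b_{i}\in\CC'$ by the induction hypothesis, and the composition $r=q_{k'}\left[\pi(y,y)\otimes (b_{1}^{*}b_{1})\otimes\pi(y,y)\otimes\cdots\otimes\pi(y,y)\right]$ satisfies $r^{*}r=p$, so $p\in\CC'$; the case $t(p)=1$ is identical with $p_{k'}$ in place of $q_{k'}$. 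No ``jumping'' and no control of through-blocks is required. A secondary weakness: you invoke Lemma \ref{lem:xsimplepartitions} for the $x$-simple partitions, but that lemma is formulated relative to $NC_{1,2}\ast NC_{ev}$, which contains $P_{x}$ and is therefore in general not contained in $\CC$ here (e.g.\ when $\G=B_{N}^{+\sharp}$); your parenthetical claim that its proof only uses $D_{xx}$, $D_{yy}$ and $q_{k}$ would itself have to be verified, whereas the paper's direct induction bypasses that lemma entirely.
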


\begin{proof}
Let $\DD$ be the category of partitions generated by $\CC_{x}\ast NC_{ev}$ and $q_{k}$. We claim that there exists a category of partitions $\DD\subset \CC'\subset \CC$ such that $\G_{N}(\CC')$ is a quotient of $\G_{N}(\DD)$ by commutation relations with $u^{xx}$ and all projective partitions in $\CC$ lie in $\CC'$.

We will prove this by induction on the number of blocks of $p$. If $p$ has one block, it is in $\DD$. Assume now that $p$ has $n\geqslant 2$ blocks. If it can be written as a horizontal concatenation, then we conclude by induction. We may therefore consider that the endpoints of $p$ on each row are connected. If they are coloured by $x$, this means that we have two $x$-points connected and another partition $b$ in the middle forming a partition $q$ and $p = q^{*}q$. But rotating yields $b\otimes\pi(x, x)\otimes \pi(x, x)\otimes b^{*}$ and we just have to add this to $\CC'$. If they are coloured by $y$, let us distinguish two cases :
\begin{itemize}
\item Assume first that $t(p) = 0$ and $p = q^{*}q$ and note that we can assume that the colouring of $q$ is alternating. If $q$ has $n = 2k'+1$ points, then $p_{k'}pp_{k'} = q_{k'}$ so that $q_{k'}\in \DD$. Moreover, if we denote by $b_{1}, \cdots, b_{n-1}$ the partitions between the points of $q$, then $b_{i}^{*}b_{i}\in\CC'$ by induction and
\begin{equation*}
r = q_{k'}\left[\pi(y, y)\otimes (b_{1}^{*}b_{1})\otimes\pi(y, y)\otimes\cdots\otimes(b_{n-1}^{*}b_{n-1})\otimes\pi(y, y)\right]\in \CC'.
\end{equation*}
Since $r^{*}r = p$ the result is proved.
\item Assume now that $t(p) = 1$. The same argument as above works : all the partitions $b_{i}^{*}b_{i}$ are in $\CC'$ by induction and using $p_{k'}$ (where $n=2k'+1$) instead of $q_{k'}$ in the definition of $r$ yields the result.
\end{itemize}
Now, all we can do is add group-like relations, hence the result.
\end{proof}

Still assuming $\CC_{y} = NC_{ev}$, the case left is when $\CC_{x}$ contains $P_{xx}$ and partitions of size at least three.

\begin{prop}
Let $N\geqslant 4$ be an integer and let $\CC$ be a category of noncrossing partitions containing $P_{xx}$, a partition with an $x$-block of size at least three and such that $\CC_{y} = NC_{ev}$ and $q_{k}\in\CC$. Then, there exists $\G\in \{S_{N}^{+\prime}, S_{N}^{+}\}$ such that $C(\G_{N}(\CC))$ is a quotient of $C(\G)\ast C(H_{N}^{+})$ by one or several of the following relations :
\begin{itemize}
\item $s_{x}w^{(k)}_{ij} = w^{(k)}_{ij}s_{x}$ for all $1\leqslant i, j\leqslant \dim(w^{(k)})$,
\item $t u^{xx}_{ij} = u^{xx}_{ij}t$ for all $1\leqslant i, j\leqslant \dim(u^{xx})$, where $t$ is some group-like element,
\item group-like relations.
\end{itemize}
\end{prop}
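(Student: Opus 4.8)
The plan is to combine the arguments of Propositions \ref{prop:symmetricbybistochastic} and \ref{prop:bistochasticbyhyperoctahedral}, since we now have simultaneously arbitrarily large $x$-blocks and the full even category on the $y$-line together with the partitions $q_{k}$. First I would determine $\CC_{x}$. By hypothesis $P_{xx}\in \CC$ and $\CC$ contains a partition with an $x$-block of size at least three; isolating this block by a minimal full subpartition (Proposition \ref{prop:intervalblockstable}) and capping the nested blocks (those coloured by $y$ being removed using partitions of $NC_{ev}$) produces a monochromatic $x$-partition with a block of size at least three lying in $\CC_{x}$. Arguing then as in Lemma \ref{lem:symetrichasfourblock} within the one-coloured category $\CC_{x}$ and invoking the classification of orthogonal noncrossing easy quantum groups, the presence of singletons together with a block of size at least three forces $\CC_{x}\in\{NC, NC'\}$, i.e.\ $\G\in\{S_{N}^{+}, S_{N}^{+\prime}\}$. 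In particular $\pi(xx, xx)\in\CC_{x}$, so all the capping partitions used in Proposition \ref{prop:symmetricbybistochastic} are available on the $x$-line.

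Next I would set $\DD$ to be the category generated by $\CC_{x}\ast NC_{ev}$ and $q_{k}$, and prove the key claim: there is a category $\DD\subset\CC'\subset\CC$ such that $\G_{N}(\CC')$ is a quotient of $\G_{N}(\DD)$ by commutation relations of the form $tu^{xx}_{ij}=u^{xx}_{ij}t$ and such that every projective partition of $\CC$ already lies in $\CC'$. As before this is proved by induction on the number of blocks, using Remark \ref{rem:noncrossingdecomposition} to reduce to a projective partition $p$ whose endpoints on each row are connected (a genuine horizontal concatenation being handled directly by induction). If the connected endpoints are coloured by $x$, I would reproduce verbatim Steps 1 and 2 of Proposition \ref{prop:symmetricbybistochastic}: using $\pi(xx, xx)$ and the straightforward generalization of \cite[Lem 4.2]{freslon2013fusion} one extracts partitions $r_{i}=b_{i}\otimes\pi(xx,xx)\otimes\overline{b}_{i}^{*}$, turns them into commutation relations $t_{i}u^{xx}_{ij}=u^{xx}_{ij}t_{i}$, and recovers $p$ from the $t_{i}$'s by a secondary induction on the size of the $x$-block. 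If instead the endpoints are coloured by $y$, I would reproduce the $y$-endpoint analysis of Proposition \ref{prop:bistochasticbyhyperoctahedral}: after making the colouring alternating, the cases $t(p)=0$ and $t(p)=1$ are handled using $q_{k'}$ and $p_{k'}$ respectively, the middle partitions $b_{i}^{*}b_{i}$ lying in $\CC'$ by induction.

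In both cases the partitions $b_{i}$ sitting between two consecutive points have strictly fewer blocks than $p$, so $b_{i}^{*}b_{i}\in\CC'$ by induction no matter whether they contain large $x$-blocks or the $q$-type $y$-structure; this is what lets the two mechanisms coexist. Once the claim is established, I would invoke Lemma \ref{lem:qpartitions} to assume $k$ odd and recall, as in the discussion preceding the definition of $w^{(k)}$, that adding $q_{k}$ is exactly the commutation relation $s_{x}w^{(k)}_{ij}=w^{(k)}_{ij}s_{x}$. Thus $\G_{N}(\CC')$ is a quotient of $\G\ast H_{N}^{+}$ by the first two families of relations in the statement, and since all projective partitions of $\CC$ lie in $\CC'$, the remaining partitions of $\CC$, rotated on one line, only make one-dimensional representations trivial, so that $C(\G_{N}(\CC))$ is finally a quotient of $C(\G_{N}(\CC'))$ by group-like relations.

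The main obstacle I expect is organizing a single well-founded induction that simultaneously absorbs the arbitrary-size $x$-blocks (which require the $\pi(xx,xx)$ reduction and produce commutation relations with $u^{xx}$) and the $y$-blocks carrying interior $x$-singletons (which require the $q_{k'}$ and $p_{k'}$ reductions). The delicate point is to check that each reduction strictly decreases the number of blocks, so that the interior pieces $b_{i}^{*}b_{i}$ genuinely fall under the induction hypothesis, and that the two families of added relations do not interfere, i.e.\ that the category $\CC'$ built by adding commutation relations with $u^{xx}$ still contains all the $x$-endpoint partitions reconstructed from the $t_{i}$'s, exactly as in the secondary induction of Proposition \ref{prop:symmetricbybistochastic}.
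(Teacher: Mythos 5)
Your proposal is correct and follows essentially the same route as the paper: the same auxiliary category $\DD$ generated by $\CC_{x}\ast NC_{ev}$ and $q_{k}$, the reduction of the $x$-endpoint components to the commutation partitions $t_{i}=b_{i}\otimes\pi(x,x)\otimes\pi(x,x)\otimes\overline{b}_{i}^{*}$ as in Proposition \ref{prop:symmetricbybistochastic}, and a final passage to group-like relations. The only (organizational) difference is that the paper dispatches the projective partitions whose $x$-blocks are singletons through Lemma \ref{lem:xsimplepartitions} rather than re-running the $y$-endpoint induction of Proposition \ref{prop:bistochasticbyhyperoctahedral}, and it leaves implicit both the identification $\CC_{x}\in\{NC, NC'\}$ and the reading of $q_{k}$ as the $s_{x}$--$w^{(k)}$ commutation relation, which you spell out.
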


\begin{proof}
Let $\DD$ be the category of partitions generated by $\CC_{x}\ast NC_{ev}$ and $q_{k}$. By Lemma \ref{lem:xsimplepartitions}, all $x$-simple projective partitions in $\CC$ lie in some $\DD\subset \CC'\subset \CC$ obtained by adding commutation relations. Consider now a projective partition where the extreme points of each row are connected and coloured by $x$. Then as in the proof of Proposition \ref{prop:symmetricbybistochastic}, the presence of this partition is equivalent to the presence of all the partitions $t_{i} = b_{i}\otimes\pi(x, x)\otimes\pi(x,x)\otimes b_{i}^{*}$ which correspond to commutation relations as in the statement. The proof is then concluded as usual.
\end{proof}

\subsubsection{Third case}

The last case to consider is quotients of $S_{N}^{+\prime}\ast S_{N}^{+\prime}$.

\begin{prop}
Assume that $P_{xx}, P_{yy}, \pi(xx, xx), \pi(yy, yy)\in \CC$. Then, $\G_{N}(\CC)$ is a quotient of $C(S_{N}^{+\prime})\ast C(S_{N}^{+\prime})$ by one or several of the following relations :
\begin{itemize}
\item $s_{x}u^{yy}_{ij} = u^{yy}_{ij}s_{x}$ for all $1\leqslant i, j\leqslant \dim(u^{yy})$,
\item $s_{y}u^{xx}_{ij} = u^{xx}_{ij}s_{y}$ for all $1\leqslant i, j\leqslant \dim(u^{xx})$,
\item group-like relations.
\end{itemize}
\end{prop}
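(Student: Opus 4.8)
The plan is to follow the same template as in Propositions \ref{prop:symmetricbybistochastic} and \ref{prop:bistochasticbyhyperoctahedral}: first isolate a free product inside $\CC$ governed by $S_{N}^{+\prime}\ast S_{N}^{+\prime}$, then climb up to $\CC$ by adding commutation relations (to capture all projective partitions) and finally group-like relations. First I would set $\DD = \CC_{x}\ast\CC_{y}$, where $\CC_{x}$ and $\CC_{y}$ are the partitions of $\CC$ coloured only by $x$ and by $y$. Since the hypotheses guarantee $P_{xx}, \pi(xx,xx)\in\CC_{x}$ and $P_{yy}, \pi(yy,yy)\in\CC_{y}$, the classification of noncrossing orthogonal easy quantum groups forces $\CC_{x}, \CC_{y}\in\{NC, NC'\}$, these being the only one-coloured categories containing both a double singleton and a four-block. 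In every case $\G_{N}(\CC_{x})$ and $\G_{N}(\CC_{y})$ are quotients of $S_{N}^{+\prime}$ by group-like relations, the only option being to make $s_{x}$ (resp. $s_{y}$) trivial in order to pass from $S_{N}^{+\prime}$ to $S_{N}^{+}$. Hence $\G_{N}(\DD) = \G_{N}(\CC_{x})\ast\G_{N}(\CC_{y})$ is a quotient of $S_{N}^{+\prime}\ast S_{N}^{+\prime}$ by group-like relations, which handles the base of the construction.

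Next I would show that there exists $\DD\subset\CC'\subset\CC$ such that $\G_{N}(\CC')$ is obtained from $\G_{N}(\DD)$ by commutation relations of the stated form and every projective partition of $\CC$ lies in $\CC'$. Using the through-block decomposition of Remark \ref{rem:noncrossingdecomposition}, I only need to treat non-through-block and single-through-block projective partitions whose endpoints on each row are connected, horizontal concatenations being dealt with by induction on the number of blocks. Such a partition has its outer block coloured by a single colour, say $x$, with full subpartitions $b_{i}$ sitting between its points. By Proposition \ref{prop:intervalblockstable} each $b_{i}^{*}b_{i}\in\CC$, hence lies in $\CC'$ by induction, and exactly as in the proof of Proposition \ref{prop:symmetricbybistochastic} one produces the partitions $t_{i} = b_{i}\otimes\pi(x,x)\otimes\pi(x,x)\otimes\overline{b}_{i}^{*}$, each of which realizes the commutation relation $u_{b_{i}^{*}b_{i}}u^{xx}_{jk} = u^{xx}_{jk}u_{b_{i}^{*}b_{i}}$; a reconstruction argument by induction on the size of the outer block then recovers $p$ from $\DD$ and the $t_{i}$'s. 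The symmetric discussion with the colours exchanged yields relations of the form $u_{b^{*}b}u^{yy}_{jk} = u^{yy}_{jk}u_{b^{*}b}$.

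It then remains to see that these commutation relations are exactly the two cross-colour relations in the statement. Here I would use that $S_{N}^{+\prime} = S_{N}^{+}\times\Z_{2}$, so that in $S_{N}^{+\prime}\ast S_{N}^{+\prime}$ the element $s_{x}$ is central in the $x$-copy and therefore commutes with all coefficients of $u^{xx}$ for free, and likewise $s_{y}$ commutes with $u^{yy}$. By Lemma \ref{lem:onedbistochastic} the group of group-like elements is generated by $s_{x}$ and $s_{y}$, so every $u_{b_{i}^{*}b_{i}}$ is a word in these two generators; modulo the automatic relations, forcing such a word to commute with $u^{xx}$ should reduce to imposing $s_{y}u^{xx}_{jk} = u^{xx}_{jk}s_{y}$, and symmetrically for $u^{yy}$. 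I expect this reduction to be the main obstacle: one must check that, for the group-like elements $u_{b_{i}^{*}b_{i}}$ that actually occur, the subgroup of $\langle s_{x}, s_{y}\rangle$ generated by $u_{b_{i}^{*}b_{i}}$ together with $s_{x}$ already contains $s_{y}$, equivalently that the minimal such partitions produce $s_{x}s_{y}$ or $s_{y}$ rather than a "far" reflection $(s_{x}s_{y})^{m}s_{x}$ with $\vert m\vert\geqslant 2$, so that no commutation relation outside the stated list is needed.

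Once this reduction is settled, $\G_{N}(\CC')$ is a quotient of $S_{N}^{+\prime}\ast S_{N}^{+\prime}$ by the two cross-colour commutation relations, and since by construction all projective partitions of $\CC$ already lie in $\CC'$, any remaining partition of $\CC$ rotates on one line to the triviality of a one-dimensional representation, i.e.~to a group-like relation. This gives $\G_{N}(\CC)$ as a quotient of $\G_{N}(\CC')$ by group-like relations and completes the proof.
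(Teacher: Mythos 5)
Your plan is the same as the paper's: the published proof of this proposition is literally ``the proof is the same as in the previous cases'', together with one remark, so your fleshed-out version of the template from Proposition \ref{prop:symmetricbybistochastic} (base free product $\CC_{x}\ast\CC_{y}$, then commutation relations to capture all projective partitions, then group-like relations) is exactly what is intended, and your first two paragraphs are fine. The point you single out as ``the main obstacle'' is precisely what the paper's one remark is meant to handle, and you should make it sharper than ``$s_{x}$ is central in the $x$-copy'': since $\pi(xx,x)\otimes P_{x}\in NC'$ (it has two odd blocks), $u^{x}\otimes s_{x}$ is a subrepresentation of $u^{xx}$, and conversely $u^{xx}_{(ij),(kl)}=(u^{x}_{ik}s_{x})(u^{x}_{jl}s_{x})$; hence commuting with $u^{xx}$ is the same as commuting with the whole subalgebra $C(S_{N}^{+})^{(x)}\subset C(S_{N}^{+\prime})$ generated by the $u^{x}_{ij}s_{x}$. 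In particular $s_{x}$ commutes with $u^{xx}$ for free, and when $P_{x}\in\CC$ (so $s_{x}=1$ and the group of group-like elements is cyclic on $s_{y}$) the only possible commutation relation is the one in the statement. This is the content of the paper's remark, and your proposal should spell it out rather than gesture at it.

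For the remaining case ($P_{x},P_{y}\notin\CC$, so the group-like elements form $\Z_{2}\ast\Z_{2}$), the worry you raise is genuine and is not resolved by what you wrote: the mechanism of Proposition \ref{prop:symmetricbybistochastic} produces, for each full subpartition $b_{i}$ nested in an $x$-block, the relation that $u_{b_{i}^{*}b_{i}}$ commutes with $u^{xx}$, and by Proposition \ref{prop:intervalblockstable} and Lemma \ref{lem:equivalence1d} the element $u_{b_{i}^{*}b_{i}}$ can be an arbitrary word in $s_{x}$ and $s_{y}$. Since $s_{x}$ commutes automatically, the subgroup of group-like elements forced to commute with $u^{xx}$ is some subgroup of $\Z_{2}\ast\Z_{2}$ containing $s_{x}$, i.e.\ $\langle s_{x}\rangle$ or $\langle s_{x},(s_{x}s_{y})^{n}\rangle$ for some $n\geqslant 1$; only $n=1$ gives the listed relation $s_{y}u^{xx}_{ij}=u^{xx}_{ij}s_{y}$, and nothing in your argument (nor in the paper's remark, which only treats $P_{x}\in\CC$) excludes the intermediate subgroups $n\geqslant 2$ or shows they collapse onto the listed relations modulo group-like relations. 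To close the gap you must either prove this collapse at the level of partitions, or state the commutation relations for a general group-like element $t$ and record, as in the remarks following Proposition \ref{prop:nonamalgamatednoy}, that the quotients are parametrized by the two subgroups of elements commuting with $u^{xx}$ and $u^{yy}$ (each containing $s_{x}$, resp.\ $s_{y}$) together with a common normal subgroup of elements made trivial. Ending the argument with ``I expect this reduction to be the main obstacle'' leaves the proof incomplete exactly where it needs to be finished.
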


\begin{proof}
The proof is the same as in the previous cases. Note that if $P_{x}\in \CC$, then $\pi(xx, x)\in \CC$ so that commutation with $u^{xx}$ implies commutation with $x$, i.e.~with all of $C(\G_{N}(\CC_{x}))$ (and similarly for $y$).
\end{proof}

As for all the other cases, they are, up to exchanging the colours, equivalent to one of the previous ones.

\subsection{The amalgamated case}

If we now assume that there are partitions with two colours, we will get a free wreath product of a pair. To prove this, all we have to do is to show that $\CC_{\Gamma_{d}}\subset \CC$ for some $d$.

\begin{lem}\label{lem:amalgamatedodd}
Let $\CC$ be a category of noncrossing partitions containing a partition with an odd block of size at least three and a partition with a two-coloured block. Then, $\CC$ contains $P_{xx}$, $P_{yy}$ and $\pi(xy, xy)$.
\end{lem}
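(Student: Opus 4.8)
The plan is to produce the three partitions in the order $P_{xx}$, then $\pi(xy,xy)$, and finally $P_{yy}$, following closely the reduction techniques already used in Lemma \ref{lem:symetrichasfourblock} and Proposition \ref{prop:firstquotienthyperoctahedral}. First I would extract a singleton from the odd block. Let $p\in\CC$ carry a block $b$ of odd size at least three and let $q$ be a minimal full subpartition containing $b$, so that $q^{*}q\in\CC$ by Proposition \ref{prop:intervalblockstable}. Removing the nested full subpartitions lying between the points of $b$ (again via Proposition \ref{prop:intervalblockstable}, exactly as in the reduction of \cite[Lem 4.2]{freslon2013fusion}) brings $q^{*}q$ to a non-through-block projective partition carrying only the colour word $w_{0}$ of $b$. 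Reducing this block by the usual capping/loop moves, and using that $|w_{0}|$ is \emph{odd}, one is always left with a single surviving point, of some colour $a$ occurring in $w_{0}$; this gives $\beta(a,a)=P_{a}^{*}P_{a}\in\CC$ and, after rotation, $P_{aa}\in\CC$. After relabelling the colours we may assume $P_{xx}\in\CC$. The point here is that the \emph{parity} of the block size, not its colouring, is what forces a surviving singleton, so this step does not use the two-coloured hypothesis.

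Once $P_{xx}\in\CC$ is available, its rotation $P_{x}^{*}P_{x}\in\CC(x,x)$ lets me cap off a single $x$-point of any partition, and in particular lets me change the parity of the number of points of a block. Using this, the odd block can be turned into one of even size and the argument of Lemma \ref{lem:hyperoctahedralinterval} yields a four-block, hence $\pi(xx,xx)\in\CC$. I would then get $\pi(xy,xy)$ by mimicking the two-case analysis of Proposition \ref{prop:firstquotienthyperoctahedral} applied to the two-coloured block: either some surviving interval is genuinely two-coloured, in which case the reduction of Lemma \ref{lem:hyperoctahedralinterval} applied to its $b^{*}b$ produces $\pi(xy,xy)$ directly; or every surviving interval is monochromatic, in which case after deleting the monochromatic intervals the partition exhibits $D_{xy}$ as an interval, so $D_{xy}^{*}D_{xy}\in\CC$ by Proposition \ref{prop:intervalblockstable}. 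Rotating the latter gives $\pi(x,y)\otimes\pi(x,y)\in\CC$, whence $\pi(xx,yy)=[\pi(x,y)\otimes\pi(x,y)]\pi(xx,xx)\in\CC$, and a final rotation produces $\pi(xy,xy)$, exactly as in the even case.

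It remains to produce $P_{yy}$, and this is where the two previous outputs are combined. In the course of the last step the two-coloured pair block $\beta(xy,xy)=D_{xy}^{*}D_{xy}$ is obtained (either as an interval via Proposition \ref{prop:intervalblockstable}, or by peeling the two-coloured block down with the help of $P_{x}^{*}P_{x}$). Capping the two $x$-points of $\beta(xy,xy)$ by means of $P_{x}^{*}P_{x}$ deletes them, and since the surviving $y$ sat in a two-point block this leaves exactly one $y$ on each row, that is $\beta(y,y)=P_{y}^{*}P_{y}\in\CC$; rotating gives $P_{yy}\in\CC$. The mechanism is again a parity one: peeling a single $x$ off a two-coloured pair block turns it into a one-point (hence odd) monochromatic block, which is precisely a $y$-singleton. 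Together with $P_{xx}$ and $\pi(xy,xy)$ this completes the three required partitions; as in Remark \ref{rem:noncrossingdecomposition} no further cases arise because every projective partition decomposes into through-block and non-through-block pieces already covered by the above.

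I expect the main obstacle to be the bookkeeping of colours and parities in the reduction of the \emph{two-coloured} block. Unlike the monochromatic situation of Lemma \ref{lem:symetrichasfourblock}, neighbouring points of a two-coloured block need not share a colour, so the elementary same-colour capping moves are not immediately applicable; the whole argument only gets going once the first singleton $P_{xx}$ is in hand, after which $P_{x}^{*}P_{x}$ restores the freedom to delete individual points and thereby to adjust parities at will. The two places that need the most care are therefore checking that this single-point capping genuinely lies in $\CC$ (it is a rotation of $P_{xx}$, so it does), and verifying that the case distinction governing the production of $\pi(xy,xy)$ — two-coloured interval versus only monochromatic intervals — is exhaustive, so that in every case both the through-block $\pi(xy,xy)$ and the pair block $\beta(xy,xy)$ used for $P_{yy}$ are obtained.
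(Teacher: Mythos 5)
The step that fails is your very first one: the claim that an odd block can always be reduced, by same\nobreakdash-colour capping moves, to a single surviving point, ``the parity of the block size, not its colouring'' being what matters. This is exactly backwards. The capping move you invoke removes two points of the block that are \emph{adjacent} (possibly after cyclic rotation and after clearing what sits between them) \emph{and carry the same colour}. For a monochromatic odd block this always applies and the reduction terminates at a singleton --- that is precisely the first part of Lemma \ref{lem:symetrichasfourblock}, whose hypothesis is that all blocks are one\nobreakdash-coloured. Here, however, the odd block is allowed to be two\nobreakdash-coloured; for a block coloured $xyx$ no two adjacent points share a colour (and the cyclically adjacent pair of $x$'s is separated by the $y$ and by whatever nested subpartitions sit between them, which you cannot delete without already having the corresponding singletons), so the reduction stalls at size three and no singleton is produced. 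This is not a repairable presentation issue: the category $\CC_{\Gamma,\Lambda,S}$ with $\Gamma=\Z_{2}\ast\Z_{2}$, $S=\{x,y\}$ and $\Lambda=\langle xyx\rangle$ contains $\beta(xyx,xyx)$ (an odd, two\nobreakdash-coloured block of size three) and $\pi(xy,xy)$, hence satisfies both hypotheses of the lemma, yet contains neither $P_{xx}$ nor $P_{yy}$, since $x,y\notin\Lambda$ and $P_{aa}\in\CC$ forces $\beta(a,a)\in\CC$. So the singleton part of the conclusion cannot be derived the way you propose (and your production of $P_{yy}$, which caps the $x$\nobreakdash-legs of $D_{xy}^{*}D_{xy}$ with $P_{x}^{*}P_{x}$, collapses with it, since it presupposes $P_{xx}$). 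For comparison, the paper's own proof only carries out the derivation of $\pi(xy,xy)$ --- via the same two\nobreakdash-case analysis on the two\nobreakdash-coloured block that you use, and that part of your argument does match it --- and is silent on how the singletons are obtained; it is only $\pi(xy,xy)$ that is actually used afterwards, via Proposition \ref{prop:classificationwreath}.

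Two smaller points. First, your route to a four\nobreakdash-block (``turn the odd block into one of even size by capping one point, then apply Lemma \ref{lem:hyperoctahedralinterval}'') does not work when the odd block has size exactly three: deleting a point leaves a pair, not a block of size at least four. The correct mechanism, used in Lemma \ref{lem:symetrichasfourblock}, is to pass to $q^{*}q$ for a minimal full subpartition $q$ containing the block, which doubles it to size at least six before reducing to four. Second, in your dichotomy for $\pi(xy,xy)$, the deletion of ``monochromatic intervals'' of odd size (singletons) again requires the corresponding $P_{aa}$ to be available, so even that case analysis quietly leans on the unproved first step.
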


\begin{proof}
Let $p\in \CC$ be a partition containing a block $b$ with two colours and let $q$ be a minimal full subpartition containing $b$ so that $q^{*}q\in \CC$ and the endpoints of $q$ are those of $b$. If $b$ has size at least three, then using the same operations as in the proof of Lemma \ref{lem:symetrichasfourblock} we see that $\pi(xy, xy)\in \CC$. If $b$ has size two, i.e.~is equal to $D_{xy}$, then rotating $q^{*}q$ yields $q' = r\otimes \pi(x, y)\otimes\pi(x, y)\otimes \overline{r}^{*}$ for some $r\in \CC(w, \emptyset)$. But then, $\pi(yy, yy)q'$ contains a four-block with two colours and we can conclude with the first part of the proof.
\end{proof}

The classification will now follow from Proposition \ref{prop:classificationwreath}. We need however to identify the subgroups $\Lambda\subset \Gamma_{d}$ appearing as one-dimensional representations. For $n$ dividing $d$ and $1\leqslant m\leqslant n-1$, let us denote by $\Lambda_{n, m}$ the subgroup of $\Gamma_{d}$ generated by $(s_{x}s_{y})^{n}$ and $(s_{x}s_{y})^{m}s_{x}$.

\begin{thm}
Let $\CC$ be a category of noncrossing partitions with at least one block of even size greater than or equal to three and at least one block with two colours. Then, $\G_{N}(\CC) = H_{N}^{+}(\Gamma_{d}, \Lambda_{n, m})$.
\end{thm}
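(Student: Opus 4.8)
The plan is to run the free-wreath-product-of-pairs strategy of this subsection in its sharpest form, exactly as in the even case: first produce an inclusion $\CC_{\Gamma_{d}} \subset \CC$ for a suitable $d$, then invoke Proposition \ref{prop:classificationwreath} to conclude that $\G_{N}(\CC)$ is a free wreath product of a pair, and finally pin down the group and the subgroup that occur. Since group-like relations are absorbed into the choice of $(\Gamma, \Lambda)$ for such quantum groups, as remarked after the even-case theorem, the outcome is the genuine equality $\G_{N}(\CC) = H_{N}^{+}(\Gamma_{d}, \Lambda_{n, m})$ rather than a quotient.

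For the group datum I would feed the hypotheses into Lemma \ref{lem:amalgamatedodd}: the category contains a block with two colours and a block of size at least three, and that lemma then delivers $P_{xx}, P_{yy}, \pi(xy, xy) \in \CC$. The singletons make $s_{x}$ and $s_{y}$ involutions, while $\pi(xy, xy)$ provides the merging of the word $xy$; together they show that $\CC$ contains the category of the free product $\Z_{2} \ast \Z_{2}$. Setting $d = \min\{k \geq 1 : \beta((xy)^{k}, \emptyset) \in \CC\}$ (with $d = 0$ if this set is empty), the argument of Proposition \ref{prop:firstquotienthyperoctahedral} yields $\CC_{\Gamma_{d}} \subset \CC$, where $\Gamma_{d} = (\Z_{2} \ast \Z_{2})/\langle (xy)^{d}\rangle$ is dihedral. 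Proposition \ref{prop:classificationwreath}, applied with $\Gamma = \Gamma_{d}$ and trivial initial subgroup, then gives $\G_{N}(\CC) = H_{N}^{+}(\widetilde{\Gamma}, \widetilde{\Lambda})$, where $\widetilde{\Gamma}$ is a dihedral quotient of $\Gamma_{d}$ (which after renaming $d$ I again call $\Gamma_{d}$) and $\widetilde{\Lambda}$ is the image of the group $\Gr(\G_{N}(\CC))$ of one-dimensional representations.

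Everything thus reduces to identifying the subgroup $\widetilde{\Lambda} \subset \Gamma_{d}$, which consists of those $\gamma$ representable by a word $w$ with $\beta(w, w) \in \CC$, and this is where the real work lies. Since $\Gamma_{d}$ is dihedral, every subgroup is either cyclic, $\langle (s_{x}s_{y})^{n}\rangle$, or reflecting, $\Lambda_{n, m} = \langle (s_{x}s_{y})^{n}, (s_{x}s_{y})^{m}s_{x}\rangle$. The integer $n$ must be the smallest $k \geq 1$ with $\beta((xy)^{k}, (xy)^{k}) \in \CC$, and $n \mid d$, exactly as in Lemma \ref{lem:cyclic1dhyperoctahedral}; the odd structure present in the symmetric regime of this section furnishes a group-like word of odd length, hence a reflection in $\widetilde{\Lambda}$, which rules out the cyclic case and forces $\widetilde{\Lambda} = \Lambda_{n, m}$, with $m \in \{1, \dots, n-1\}$ recording, modulo $n$, the shortest such reflection. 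The main obstacle is precisely this identification: one must show that $\widetilde{\Lambda}$ contains no rotation shorter than $(s_{x}s_{y})^{n}$ and that all its reflections lie in the single coset $(s_{x}s_{y})^{m}s_{x}\langle (s_{x}s_{y})^{n}\rangle$, so that these two elements generate it. I would carry this out by the canonical-form reduction already used in Lemma \ref{lem:cyclic1dhyperoctahedral}: capping neighbouring equal colours with $D_{xx}$ and $D_{yy}$ brings an arbitrary non-through-block projective partition to alternating colouring, after which minimality of $n$ and $m$ together with the relation $(xy)^{d} = e$ let one rewrite it as a horizontal concatenation of copies of $\beta((xy)^{n}, (xy)^{n})$ and at most one reflection generator. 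Ruling out any shorter reflection, and thereby fixing the exact coset of reflections occurring, is the delicate combinatorial point; once it is settled, feeding the result into Proposition \ref{prop:classificationwreath} completes the proof.
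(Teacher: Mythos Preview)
Your approach is correct and matches the paper's: invoke Lemma~\ref{lem:amalgamatedodd} to obtain $P_{xx}$, $P_{yy}$, $\pi(xy,xy)\in\CC$, deduce $\CC_{\Gamma_{d}}\subset\CC$, apply Proposition~\ref{prop:classificationwreath} to get $\G_{N}(\CC)=H_{N}^{+}(\Gamma_{d},\Lambda)$, and then identify $\Lambda$ using the subgroup structure of the dihedral group $\Gamma_{d}$ together with the presence of an odd block.

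The only remark is that you manufacture difficulty in the last step that is not there. You correctly state that every subgroup of $\Gamma_{d}$ is either cyclic $\Lambda_{n}$ or of the reflecting form $\Lambda_{n,m}$; this is a complete classification, not merely a list of examples. Hence once $\Lambda$ contains a reflection (which is exactly what an odd-length group-like word gives), it is \emph{automatically} equal to some $\Lambda_{n,m}$: there is no separate combinatorial verification needed that ``no shorter rotation occurs'' or that ``all reflections lie in a single coset''. Those facts are built into the dihedral subgroup classification itself. The paper's proof accordingly dispatches this step in one sentence: $\Lambda$ is either $\Lambda_{n}$ or $\Lambda_{n,m}$; the first case forces all partitions to be even, contradicting the hypothesis; hence $\Lambda=\Lambda_{n,m}$. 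Your canonical-form reduction argument would work, but it re-proves part of the dihedral subgroup lattice rather than simply citing it.
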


\begin{proof}
The isomorphism with $H_{N}^{+}(\Gamma_{d}, \Lambda)$ is a direct consequence of Lemma \ref{lem:amalgamatedodd} and Proposition \ref{prop:classificationwreath}. As for the form of $\Lambda$, simply recall that any subgroup of $\Gamma_{d}$ is of the form $\Lambda_{n}$ or $\Lambda_{n, m}$. Since in the first case there is no odd partition in the category, we must be in the second case.
\end{proof}

\section{Summary and further questions}\label{sec:summary}

In this final section, we will restate our main results in a different form. This will give more insight on possible generalizations to an arbitrary number of colours. To start, let us point out that our classification can be given as an explicit list of categories of partitions. In fact, commutation relations are explicitly expressed with partitions in the proofs. As for the group-like relations, any non-through-block projective partition is equivalent to an alternating one so that we only have to add such partitions. Moreover, in all cases the group of one-dimensional representations is a quotient of $\Z_{2}\ast\Z_{2}$ and the only relations that one can add to this group or its quotients are : $\gamma_{x} = 1$, $\gamma_{y} = 1$ or $(\gamma_{x}\gamma_{y})^{n} = 1$ for some integer $n$. Thus, we have a process to produce all possible categories of noncrossing partitions on two self-inverse colours :
\begin{itemize}
\item Choose a free product of categories of partitions of two orthogonal easy quantum groups,
\item Add partitions corresponding to commutation relations or twisted amalgamation,
\item Add $P_{x}$, $P_{y}$ or an alternating partition of even size (this partition can be given a canonical form depending on the original free product).
\end{itemize}

If we have rather expressed our results in terms of quotients by relations, it is because we think it naturally suggests generalizations. To explain this, let us give a name to the fact that each colour is its own inverse, since this was used crucially in our work.

\begin{de}
A noncrossing partition quantum group is said to be \emph{of orthogonal type} if all the colours are their own inverses.
\end{de}

Gathering the results of Sections \ref{sec:orthogonal}, \ref{sec:bistochastic}, \ref{sec:hyperoctahedral} and \ref{sec:symmetric} would produce a long list of all noncrossing partition quantum groups of orthogonal type on two colours. However, we have highlighted in Section \ref{sec:structure} three basic constructions on the class of noncrossing partition quantum groups : (twisted) amalgamated free products, group-like relations and commutation relations between one-dimensional representations and higher-dimensional ones. Since free quantum groups are in some sense the simplest examples of noncrossing partition quantum groups, we will say that a noncrossing partition quantum group is \emph{elementary} if it can be obtained from free quantum groups using only these three operations. The complete classification can now be stated as follows :

\begin{thm}
Let $\G$ be a noncrossing partition quantum group of orthogonal type with two colours. Then, $\G$ is either elementary of a free wreath product of pair.
\end{thm}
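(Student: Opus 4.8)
The plan is to prove the theorem by assembling the four case-by-case classifications of Sections \ref{sec:orthogonal}--\ref{sec:symmetric} and sorting their outcomes into the two advertised families, using the stability results of Section \ref{sec:structure} to certify elementarity. First I would record the auxiliary fact that every orthogonal easy quantum group is elementary: the groups $O_N^+$, $S_N^+$, $H_N^+$ and $B_N^+$ have no nontrivial one-dimensional representation and are therefore free in the sense of Definition \ref{de:free}, while the remaining ones $B_N^{+\sharp}$, $B_N^{+\prime}$ and $S_N^{+\prime}$ are obtained from these by the operations of Section \ref{sec:structure} (group-like relations, commutation relations, and the amalgamation creating the relevant copy of $\Z_2$). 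Since a free product is the amalgamated free product over the trivial subgroup, Propositions \ref{prop:1dstability}, \ref{prop:commutationrelations} and \ref{prop:amalgamatedstability} show that the class of elementary quantum groups is stable under the three operations, so in particular a free product of orthogonal easy quantum groups is elementary.

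With this in hand I would dispose of the two degenerate configurations. If $\pi(x,y)\in\CC$ then $u^x=u^y$ and $\G_N(\CC)$ is an orthogonal easy quantum group, hence elementary; if $\CC$ is a free product then $\G_N(\CC)=\G_N(\CC_x)\ast\G_N(\CC_y)$ is a free product of two orthogonal easy quantum groups, again elementary. In all remaining situations $\CC$ is neither, and I would split according to the maximal block size exactly as in the body of the paper.

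Next I would run through the four cases. When every block has size two, Theorem \ref{thm:biorthogonalclassification} realises $\G_N(\CC)$ as a quotient of $O_N^{++}=O_N^+\ast_{PO_N^+}O_N^+$ by group-like relations; as an amalgamated free product of the free quantum group $O_N^+$ followed by group-like relations, it is elementary. When all blocks have size at most two (Section \ref{sec:bistochastic}), the non-amalgamated propositions present $\G_N(\CC)$ as a quotient of a free product (such as $B_N^{+\sharp}\ast O_N^+$, giving $BO_N^{+\sharp}$) by commutation and group-like relations, while Theorem \ref{thm:bibistochasticclassification} gives a quotient of the twisted amalgamated free product $B_N^{++}(K)$ by group-like relations; both are elementary. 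The genuinely new outputs occur for larger blocks: when all blocks are even with one of size at least four, the classification theorem of Section \ref{sec:hyperoctahedral} yields $\G_N(\CC)=H_N^+(\Gamma_d,\Lambda_n)$, and in the two-coloured subcase of Section \ref{sec:symmetric} the final theorem there yields $\G_N(\CC)=H_N^+(\Gamma_d,\Lambda_{n,m})$; both are free wreath products of a pair. The remaining one-coloured subcases of Section \ref{sec:symmetric} express $\G_N(\CC)$ as a quotient of a free product $\G_1\ast H_N^+$ (or $\G_1\ast\G_2$) of orthogonal easy quantum groups by commutation and group-like relations, hence elementary.

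The content of the argument beyond citation is twofold, and this is where the main obstacle lies. First, one must verify the auxiliary elementarity statement, in particular that the non-free orthogonal easy quantum groups and the intermediate building blocks $O_N^{++}$, $BO_N^{+\sharp}$ and $B_N^{++}(K)$ are genuinely reachable from free quantum groups by the three operations, rather than assuming it. Second, and more delicate in practice, one must check that the four cases together with the reductions ``up to exchanging the colours $x$ and $y$'' really exhaust all categories $\CC$, so that no noncrossing partition quantum group of orthogonal type on two colours escapes the dichotomy.
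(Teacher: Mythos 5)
Your overall strategy is the right one and is in fact the only proof the paper itself offers: the theorem is a summary statement whose justification is precisely the assembly of the case-by-case results of Sections \ref{sec:orthogonal}--\ref{sec:symmetric}, and your sorting of the outcomes (free wreath products of pairs in the even and two-coloured odd cases, ``elementary'' everywhere else) agrees with the paper. Your second worry, exhaustiveness of the case split, is not a real obstacle: after the standard reductions (a singleton together with a block of size at least three forces an odd block of size at least three, etc.) the four sections do cover all categories up to exchanging the colours.

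The genuine gap is in your auxiliary claim that every orthogonal easy quantum group is elementary, which you assert via ``the amalgamation creating the relevant copy of $\Z_{2}$'' and then, contradictorily, list as an unresolved obstacle. Under the literal definition, elementary quantum groups are built from \emph{free} quantum groups (Definition \ref{de:free}) using only twisted amalgamated free products, group-like relations and commutation relations. None of these operations decreases the number of fundamental representations, so a one-coloured elementary quantum group is a quotient of a one-coloured free quantum group by group-like and commutation relations; since a free quantum group has no nontrivial group-like element, such quotients are trivial and the only one-coloured elementary quantum groups of orthogonal type are $O_{N}^{+}$, $B_{N}^{+}$, $H_{N}^{+}$ and $S_{N}^{+}$. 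In particular $B_{N}^{+\sharp}$, $B_{N}^{+\prime}$ and $S_{N}^{+\prime}$ are \emph{not} reachable this way (adding $P_{xx}$ to $\CC_{O^{+}}$ is not a group-like relation in the sense of Proposition \ref{prop:1dstability}, because the projective partition $P_{x}^{*}P_{x}$ does not already lie in $\CC_{O^{+}}$), and ``amalgamation creating a copy of $\Z_{2}$'' is not one of the three operations of Section \ref{sec:structure} --- Proposition \ref{prop:amalgamatedstability} amalgamates two partition quantum groups over a common dual quantum subgroup and cannot adjoin a group dual factor. Since $B_{N}^{+\sharp}$, $B_{N}^{+\prime}$ and $S_{N}^{+\prime}$ occur as free factors in the bistochastic and symmetric classifications (e.g.\ $BO_{N}^{+\sharp}$ and $B_{N}^{++}(K)$ are built on $B_{N}^{+\sharp}$), your argument for elementarity in those cases does not close. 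To repair it one must either read ``elementary'' the way the paper's own three-step recipe at the start of Section \ref{sec:summary} suggests --- starting from free products of \emph{orthogonal easy} quantum groups rather than from free quantum groups --- or supply an actual construction of the extra $\Z_{2}$ factors from free quantum groups, which neither you nor the paper provides.
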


\begin{rem}
This statement is less precise than what we have obtained. In fact, we have proven that we only need very few commutation relations : in all cases but the symmetric one, it is enough to consider commutation relations with a quantum subgroup of one of the factors. This means that all other commutation relations can be obtained by these simple ones and group-like relations. In the symmetric case however, we also need commutation relations with $w^{(k)}$.
\end{rem}

In this form, it naturally suggests the following question : is any noncrossing partition quantum group of orthogonal type either elementary or a free wreath product of pair ?

If we now drop the "orthogonal type" assumption, there is another basic operation to consider. Its origin is a description of the free unitary quantum group $U_{N}^{+}$ as a complex version of the free orthogonal group $O_{N}^{+}$ given by T. Banica in \cite{banica1997groupe}. It has later been generalized by P. Tarrago and M. Weber in \cite{tarrago2015unitary} as follows : let $\G$ be a compact quantum group together with a fundamental representation $u$, let $d\geqslant 0$ be an integer and let $z$ be the canonical generator of $C^{*}(\Z_{d})$ (with the convention $\Z_{0} = \Z$). Then, the $d$-free complexification of $\G$ is the quantum subgroup of $C(\G)\ast C^{*}(\Z_{d})$ generated by $uz$. One can similarly define the $d$-tensor complexification. These two constructions were used to describe all noncrossing partition quantum groups on two mutually inverse colours in \cite{tarrago2015unitary} (note that they use the word "free" to denote any quantum group which is just "noncrossing" in our sense) :

\begin{thm}[Tarrago-Weber]
Any noncrossing partition quantum group on two mutually inverse colours can be obtained from free orthogonal easy quantum groups or free wreath products by using complexifications.
\end{thm}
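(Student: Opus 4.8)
This is the theorem of Tarrago and Weber from \cite{tarrago2015unitary}, so I will only indicate the shape of an argument in the spirit of the present paper, running the complexification dictionary in reverse. The plan is to start from a category $\CC$ of noncrossing partitions on the two mutually inverse colours $\{x, y\}$, with $y = x^{-1}$, extract an orthogonal object together with a numerical parameter, and then recognise $\CC$ as a complexification.

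First I would pass from $\CC$ to its \emph{orthogonal shadow} $\CC_{\R}$, the category obtained by forgetting the colouring, that is by sending each $p \in NC^{\{x, y\}}$ to its underlying one-coloured partition in $NC$. Since erasing colours manifestly commutes with horizontal and vertical concatenation, involution and rotation, $\CC_{\R}$ is again a category of noncrossing partitions. By the classification of noncrossing orthogonal easy quantum groups of \cite{weber2012classification} and \cite{raum2013full}, which in the hyperoctahedral and symmetric cases takes the form of a free wreath product, $\G_N(\CC_{\R})$ is one of the admissible building blocks, namely a free orthogonal easy quantum group or a free wreath product.

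Next I would recover $\CC$ from $\CC_{\R}$ by re-colouring. The content of the complexification construction at the level of partitions is a congruence condition: assigning to an upper point the weight $+1$ or $-1$ according to its colour, and the opposite convention on the lower row to account for the contragredient, a colouring of a partition of $\CC_{\R}$ produces a partition of $\CC$ precisely when every block has total weight divisible by a fixed integer $d$. The invariant $d$ is nothing but the generator of the subgroup of $\Z$ indexing the admissible windings, and it is read off from the one-dimensional representations exactly as the order of the group-like element in Lemma \ref{lem:cyclic1d}; the value $d = 0$, corresponding to the free group $\Z$, yields the full complexification. Finally, to separate the free complexification from the tensor one I would test whether the group-like generator $z$ commutes with the coefficients of the fundamental representation, which at the level of partitions amounts to the presence or absence of the commutation partitions produced in Proposition \ref{prop:commutationrelations}: their presence gives the tensor complexification, their absence the free one.

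The hard part will be the \emph{tightness} of this reconstruction. One must prove that re-colouring $\CC_{\R}$ under the congruence constraint returns exactly $\CC$, with no spurious extra partitions, and conversely that the triple consisting of the shadow $\CC_{\R}$, the parameter $d$ and the free-versus-tensor choice is a complete invariant. The delicate point is to check that the weight-congruence condition is preserved under vertical concatenation: when strings are removed as loops, the upper colouring of one factor is matched against the lower colouring of the other, and one must verify that no block of the composite can acquire a forbidden winding. This block-by-block verification, organised along the noncrossing decomposition of Remark \ref{rem:noncrossingdecomposition} into through-block and non-through-block pieces, is the combinatorial heart of the statement and is carried out in detail in \cite{tarrago2015unitary}.
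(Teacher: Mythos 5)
The paper does not actually prove this statement: it is imported verbatim from Tarrago and Weber and used as a black box, so there is no internal argument to compare yours against. Judged on its own terms, your outline does follow the strategy genuinely used in \cite{tarrago2015unitary}: decolour the category to obtain an orthogonal one, classify that via \cite{weber2012classification} and \cite{raum2013full}, and then encode the admissible recolourings by numerical invariants read off from colour sums. Two caveats, one of which is a real inaccuracy. First, your proposed complete invariant --- the orthogonal shadow, a single integer $d$, and a binary free-versus-tensor choice --- is too coarse. Tarrago and Weber need \emph{two} colouring parameters: a global degree $d(\CC)$ (the subgroup of $\Z$ generated by the total colour sums of partitions of $\CC$ rotated onto one line) and a local degree $k(\CC)$ (governing colour sums between consecutive legs of a single block), subject to a divisibility relation. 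The distinction between their ``globally colourized'' and ``locally colourized'' series is precisely what your single condition ``every block has weight divisible by $d$'' cannot see, and the free/tensor dichotomy is not a clean yes/no commutation test but is entangled with which of these two regimes one is in. Second, the step you flag as the hard part --- that recolouring the shadow under the congruence constraint returns exactly $\CC$ and that the invariants separate categories --- is not a verification to be outsourced; it is the entire content of the theorem, and in the noncrossing case it is carried out by a case analysis on block sizes much like the present paper's Sections \ref{sec:orthogonal}--\ref{sec:symmetric}, not by a single stability-under-composition lemma. So the proposal is a reasonable table of contents for the Tarrago--Weber argument with one structural error (one parameter where two are needed), but it is not a proof.
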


Again, this leads to a natural question : can any noncrossing partition quantum group be obtained from noncrossing partition quantum groups of orthogonal type and free wreath products of pairs using only complexifications ? The first step here would be to give an explicit construction of the category of partitions of a complexification from the original one. Even in the easy quantum group case, this is unclear as far as we know.

\bibliographystyle{amsplain}
\bibliography{../../../../quantum}

\end{document}